\newtheorem{remark}{\bf Remark}
\newtheorem{lemma}{\bf Lemma}
\newtheorem{example}{\bf Example}
\newtheorem{corollary}{\bf Corollary}
\newtheorem{proposition}{\bf Proposition}
\newtheorem{conjecture}{\bf Conjecture}
\newtheorem{theorem}{\bf Theorem}
\newtheorem{definition}{\bf Definition}
\newtheorem{thm}[theorem]{\bf Theorem}
\newtheorem{lem}[lemma]{\bf Lemma}
\newtheorem{defn}[definition]{\bf Definition}
\newcommand{\bN}{\ensuremath{\mathbb{N}}}
\newcommand{\bR}{\ensuremath{\mathbb{R}}}
\newcommand{\eps}{\ensuremath{\epsilon}}
\newcommand{\beq}{\begin{equation}}
\newcommand{\eeq}{\end{equation}}
\newcommand{\ip}[2]{\ensuremath{\langle {#1},{#2} \rangle}}
\newcommand{\abs}[1]{\ensuremath{\left\vert{#1} \right\vert}}
\DeclareMathOperator{\lspan}{span}
\DeclareMathOperator{\rk}{rank}
\newcommand{\kk}{K_{\alpha}}
\newcommand{\kx}{K_{\alpha}(X)}
\newcommand{\km}{K_{1/5}}
\newcommand{\ex}{\overline{x}}
\begin{document}

\begin{frontmatter}

\title{New Upper Bounds for Equiangular Lines by Pillar Decomposition
}


\author{Emily J. King}
\ead{king@math.uni-bremen.de}
\address{Faculty 3: Mathematics / Computer Sciences\\\vspace*{-1mm}University of Bremen,  28359 Bremen, Germany}

\author{Xiaoxian Tang\corref{mycorrespondingauthor}}
\cortext[mycorrespondingauthor]{Corresponding author}
\ead{xiaoxian@math.tamu.edu}
\ead[url]{sites.google.com/site/rootclassification}
\address{Department of Mathematics\\Texas A\&M University,  College Station, TX 77843-3368, USA}




\begin{abstract}
We derive a procedure for computing an upper bound on the number of equiangular lines  in various Euclidean vector spaces by generalizing  
the classical pillar decomposition developed by \citep{LS1973}; namely, we use linear algebra and combinatorial arguments to bound the number of vectors within an equiangular set which have inner products of certain signs with a negative clique.  After projection and rescaling, such sets are also certain spherical two-distance sets, and semidefinite programming techniques may be used to bound the size. Applying our method, we prove new relative bounds for the angle $\arccos(1/5)$. Experiments show that 
our relative bounds for all possible angles are considerably less than the known semidefinite programming bounds for a range of larger dimensions. Our computational results also show 
an explicit bound on the size of a set of equiangular lines in $\bR^r$ regardless of angle, which is strictly less than the well-known Gerzon's bound if $r+2$ is not a square of an odd number: 
\begin{equation*}
\begin{cases}
\frac{4r\left(m+1\right)\left(m+2\right)}{\left(2m+3\right)^2-r}& r=44, 45, 46, 76, 77, 78, 117, 118, 166, 222, 286, 358\\
\\
\frac{\left(\left(2m+1\right)^2-2\right)\left(\left(2m+1\right)^2-1\right)}{2}& \text{other}~r~\text{between}~44~\text{and}~400, 
\end{cases}
\end{equation*}
where $m$ is the largest positive integer such that $(2m+1)^2\leq r + 2$.

\end{abstract}

\begin{keyword}
equiangular lines \sep pillar decomposition \sep   Lemmens and Seidel's conjecture \sep semidefinite programming \sep spherical two-distance sets
\MSC[2010] 05B20\sep  05B40
\end{keyword}

\end{frontmatter}

\section{Introduction}\label{introduction}

In this paper, we are concerned with the maximum number of equiangular lines in Euclidean vector spaces: 

\bigskip

\noindent
{\bf Problem Statement.} For a given integer $r$ with $r\geq 2$, what is the maximum number of distinct lines in $r$-dimensional Euclidean space ${\mathbb R}^r$ such that the angle between each pair of lines equals $\arccos(\alpha)$ for some $0<\alpha<1$?

\bigskip
\noindent
By selecting a unit vector in each line in a set of equiangular lines, we can formally define it as an equiangular set of unit vectors. 

\begin{definition}\label{def:def1}
We say $X=\{x_1, \ldots, x_s\}\subset {\mathbb R}^r$ is a {\em set of equiangular lines} (or, simply \emph{equiangular}) if  for some $0 < \alpha < 1$,
\begin{equation}\label{def1}
\langle x_i, x_j \rangle ~=~
\begin{cases}
1, & i = j\\
\pm\alpha, & i \neq j.
\end{cases}
\end{equation}
\end{definition}

\noindent 
By slight abuse of terminology, we will say that vectors which satisfy the equality~(\ref{def1}) are equiangular with \emph{angle $\alpha$}, even though the actual angle is $\arccos(\alpha)$. The ambiguity of the sign of the inner product is due to the choice of a unit vector in each line.


Fix the dimension $r$ $(r\geq 2)$ and the angle $0<\alpha <1$.  We define $s_\alpha(r)$ to be the maximum cardinality of an equiangular set in $\bR^r$ with angle $\alpha$.  Further $s(r)$ is defined to be the maximum cardinality of any equiangular set in $\bR^r$, that is,
\[
s(r)~\triangleq~ \max_{\alpha > 0} ~s_\alpha(r).
\]

\noindent 
Now the problem statement can be precisely rewritten: 

\bigskip
\noindent
{\bf Problem Statement$\mathbf{'}$.} For a given integer $r$ with $r\geq 2$,  what is $s(r)$?

\bigskip
\noindent
{\bf Motivation.}
The problem of finding the maximum number of equiangular lines has been studied for at least 70 years \citep{Haan48}. Equivalence classes of sets of equiangular lines are equivalent to so-called \emph{two-graphs} (not be be confused with $2$-graphs) and are intricately connected with many problems in algebraic graph theory \citep{GodRoy01}. They are also equivalent to \emph{spherical codes} with particular angle sets \citep{delsarte1977}. In a seminal paper \citep{GrassPack}, the authors credit a post to a newsgroup in 1992 from an oncologist named Julian Rosenman for their interest in the field. The post asked the best way to separate laser beams going through a particular tumor, which can be thought of as asking for in-some-sense optimal packings of lines in $\bR^3$.  From the point of view of applications, in certain extremal cases, equiangular sets have further desirable properties with respect to data analysis and coding theory \citep{GrassPack,StH03}.  These special sets are called {\em equiangular tight frames} (ETF). An ETF is an equiangular set $\{x_1, \ldots, x_s\}\subset {\mathbb R}^r$ such that 
for any $y \in \bR^r$, the following Parseval-like equality holds
\begin{equation*} 
\label{eqn:frame}
\frac{s}{r} ~\langle y, y \rangle ~=~ \sum_{j=1}^s \langle {y}, {x_j} \rangle ^2.
\end{equation*}
ETFs solve a packing problem in Grassmannian space \citep{BKo06,GrassPack,DHST08}, are known to be optimally robust to erasures \citep{StH03,Bod07}, and further have optimal coherence which is related to the appropriateness of using a set of vectors for sparse coding \citep{DE03,BDE09}.

The theory of equiangular lines and frames is related to linear algebra (e.g., existence of certain matrices \citep{LS1973,VaSei66,STDH07}), combinatorial group theory (e.g., difference sets \citep{XZG05,DiFe07}), geometry (e.g., regular spherical polytopes \citep{Cox}), graph theory (e.g., [regular] two-graphs and strongly regular graphs \citep{HoPa04,VaSei66}), combinatorial designs (e.g., Steiner systems \citep{FMT12}), Jacobi polynomial expansions (\citep{delsarte1977}), and more.

Table~\ref{known} presents the currently known $s(r)$ for dimensions $2 \leq r \leq 43$.  
\begin{table}[h]
\caption{Maximum number of equiangular lines for small dimensions \citep{LS1973, Wal09, BaYu14, GKMS15,Yu15,AzMa16,Sz2017,GG2018,GrY18}}\label{known}
\begin{center}
\begin{tabular}{c||ccccccc}
$r$&             $2$ & $3$ & $5$  & $6$ & $7$--$13$ & $14$   \\
$s(r)$& $3$ & $6$ & $10$ & $16$&$28$& $28$--$29$  \\
\hline
\hline
$r$& $15$ &$16$ &               $17$ &         $18$&     $19$&              $20$ \\
$s(r)$& $36$ & $40$--$41$ & $48$--$49$ &$54$--$60$&$72$--$75$  & $90$--$95$  \\
\hline
\hline
$r$&     $21$ & $22$ &$23$--$41$ & $42$ & $43$ \\
$s(r)$ &$126$ & $176$& $276$& $276$--$288$  & $344$ 
\end{tabular}
\end{center}
\end{table}
\noindent
An attractive direction of research is to develop a general method to compute $s(r)$ or a bound on $s(r)$ for any $r\geq 44$. So far, for any $r$ $(r\geq 44)$, we only know \cite[Corollary 2.8]{GKMS15}
\beq\label{eqn:greavebnd}
\frac{32r^2+328r+296}{1089}\leq s(r) \leq \frac{r(r+1)}{2}.
\eeq
Here the upper bound $r(r+1)/2$ is known as the famous Gerzon's bound (from private discussions with Gerzon mentioned in \citep{LS1973}). 
It is also well-known that Gerzon's upper bound can be sharpened considerably for certain $r$. For instance, $s(43)=344<(43\times 44)/2=946$. 

One possible way to get a 
non-trivial upper bound is to consider $s_\alpha(r)$ for a fixed angle $\alpha$. 
A nice classical result is that $s(r)$ $(r>3)$  can be solved by determining finitely many 
$s_\alpha(r)$ where $1/\alpha$ is an odd  integer bounded by $\sqrt{2r}$ (Proposition~\ref{angles}). Thus, throughout the paper, we assume $1/\alpha$ is an odd integer which is greater than $3$. 
An upper bound for $s_\alpha(r)$ is often called a {\em relative bound}. (We note that in some papers, {\em relative bound} refers specifically to the analog of Gerzon's bound for a particular angle.)
Another theorem of note is Theorem 4.5 in \citep{LS1973}, 
which determines $s_{1/3}(r)$ completely by decomposing an equiangular set into {\em pillars} (see the precise definition in Section~\ref{reviewseidel}) and studying the algebraic structure of each pillar and also the combinatorial structure when all pillars are non-empty.  
However, by the same spirit, the next interesting case $\alpha=1/5$
is only partially solved and there is a long-standing conjecture \citep[Conjecture 5.8]{LS1973}.
The best known $s_{1/5}(r)$ is summarized in \citep[Table 4]{GKMS15}. 
For $\alpha\leq 1/7$, following the classical method, one might need to characterize the connected simple graphs with maximum eigenvalue 
$3$ (or $>3$). The last sentence in \citep{neumaier1989} says this requires substantially stronger techniques.  The good news is that relative bounds for general $\alpha$ can be computed by semidefinite programming (SDP) \citep{BaYu14}. The best known non-trivial relative bounds and upper bound of $s(r)$ for $44\leq r\leq 136$ that existed before this paper was originally released can be found in \citep[Table 3]{BaYu14}. Notice that for $r>136$, this SDP method might give a bound which is greater than Gerzon's bound. 

\bigskip
\noindent
{\bf Contributions.}
In this paper, our main contribution is a universal  procedure for computing a non-trivial upper bound for general  dimension $r$. 
Our main contributions have three stages, see (C1), (C2), and (C3). 
\begin{itemize}
\item[(C1)] We derive a procedure (Subsection \ref{pipeline}) for computing an upper bound of an equiangular  set  $X\subset {\mathbb R}^r$ with angle $\alpha$. Our method is to  decompose  $X$  into finitely many equivalence classes with respect to 
a fixed $K$-base (Definition \ref{def:kbase}, a maximal negative clique). Here each equivalence class generalizes the concept of a pillar defined in \citep[Page 501, Section 4]{LS1973}. We 
prove the number of equivalence classes in $X$ and an upper bound for each equivalence class (Theorems~\ref{general},~\ref{thm:ec}, and~\ref{th:s2}). 

\item[(C2)] After a more careful analysis of the pillar decompositions (Theorems~\ref{general},~\ref{thm:ec}, and~\ref{th:s2}), we provide new relative bounds for the angle $\alpha =1/5$ (Theorem \ref{thm:rb5} and Corollary \ref{cry:rb5}). 

\item[(C3)] By applying Theorems~\ref{general},~\ref{thm:ec},~\ref{th:s2}, and~\ref{thm:rb5}, we compute upper bounds of equiangular sets for 
$44\leq r\leq 400$. The computational results show an explicit upper bound:
\begin{equation*}
\begin{cases}
\frac{4r\left(m+1\right)\left(m+2\right)}{\left(2m+3\right)^2-r}, & r=44, 45, 46, 76, 77, 78, 117, 118, 166, 222, 286, 358\\
\\
\frac{\left(\left(2m+1\right)^2-2\right)\left(\left(2m+1\right)^2-1\right)}{2},&  \text{other}~r~\text{between}~44~\text{and}~400. 
\end{cases}
\end{equation*}
This bound is strictly less than Gerzon's bound if $r+2$ is not a 
square of an odd number (Theorem~\ref{thm:main}).  The result leads us to a conjecture on a new general upper bound (Conjecture~\ref{conjecture}).  
\end{itemize}

We note that our approach using the pillar decomposition can be seen as being similar to the methods of \citep{BDKS18}, which were independently developed. In \citep{BDKS18}, given an equiangular set, a weighted graph with vertices corresponding to vectors in the set and edge weights being the values of the corresponding inner products is constructed.  Using Ramsey theory, for large enough graphs, there must exist a clique of positive edges.  They proceed using orthogonal projections onto certain sets, like the orthogonal complement of the large clique, to transform their problem about equiangular lines to a related problem about certain two-angle spherical codes. (See also~\citep{Goss04}.) Our methods are related to finding maximal negative cliques and characterizing inner products connecting to the clique via orthogonal projections.  The size of positive cliques is bounded by the dimension and negative cliques by a function of the angle, thus yielding a very different structure. The main theorem of their paper concerning equiangular lines is a collection of asymptotic bounds, namely:
\begin{thm}\cite[Theorem 1.1]{BDKS18}\label{thm:asymp}
Fix $\alpha \in (0,1)$.  For $r$ sufficiently large relative to $\alpha$, the maximum number of lines in $\bR^r$ with angle $\alpha$ is exactly $2r-2$ if $\alpha = 1/3$ and at most $1.93r$ otherwise.
\end{thm}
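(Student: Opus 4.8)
The plan is to pass, via a Ramsey-type argument and an orthogonal projection, from the equiangular-lines problem to a bound on certain spherical two-distance codes, and then to invoke linear/semidefinite-programming bounds for the latter; the angle $1/3$ will survive as the unique exceptional case precisely because there the projected code degenerates into an orthonormal set (up to antipodes), whose extremal size is $\sim 2r$. First I would set up the combinatorics. Fix an equiangular set $X=\{x_1,\dots,x_n\}\subset\bR^r$ with angle $\alpha$ and form the ``negative graph'' $G$ on $[n]$ with $i\sim j$ iff $\langle x_i,x_j\rangle=-\alpha$. The Gram matrix of a negative clique of size $m$ is $(1+\alpha)I-\alpha J$, which is positive semidefinite only for $m\le 1+1/\alpha$, so the clique number of $G$ is bounded by a constant depending only on $\alpha$. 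By Ramsey's theorem, once $n$ is a sufficiently large multiple of $r$ the graph $G$ contains an independent set $P$ — a ``positive clique'' of vectors with pairwise inner product $+\alpha$ — of size $t=t(n,\alpha)\to\infty$ as $r\to\infty$. This is the analogue of fixing a $K$-base/pillar skeleton, but with the roles of positive and negative cliques interchanged.

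Next I would project. Put $W=\operatorname{span}(P)$, so $\dim W^{\perp}\ge r-t$, and for $x\in X\setminus P$ let $N(x)=\{p\in P:\langle x,p\rangle=-\alpha\}$, choosing between $x$ and $-x$ so that $N(x)$ is the minority sign set. Since the Gram matrix of $P$ is explicit, $\|\operatorname{proj}_W x\|^2$ is an explicit function of $|N(x)|$ and $t$, and the inequality $\|\operatorname{proj}_W x\|\le\|x\|=1$ then forces $|N(x)|\le C(\alpha)=O(1)$ once $t$ is large (for $\alpha=1/3$ this bound reads $|N(x)|\le 1$). A second explicit computation shows that the inner product $\langle\bar x,\bar x'\rangle$ of the normalized projections of $x,x'$ onto $W^{\perp}$ depends, up to an error $O(1/t)$, only on $|N(x)|$, $|N(x')|$, $|N(x)\cap N(x')|$ and the sign of $\langle x,x'\rangle$ — a bounded list of values. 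Partitioning $X\setminus P$ into the resulting $O(1)$ ``types'', and deleting a further bounded-dimensional subspace to absorb the $O(1/t)$ errors, each part becomes a spherical few-distance set in dimension $\ge r-t-O(1)$ whose inner products are prescribed numbers determined by $\alpha$ and the type; for the ``main'' type $N(x)=\emptyset$ these inner products are approximately $0$ and $\tfrac{-2\alpha}{1-\alpha}$, which at $\alpha=1/3$ collapse to $0$ and $-1$.

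Finally I would bound each part by the Delsarte-type linear-programming (or SDP) relative bound for spherical two-distance sets with the given inner products, obtaining a bound of the form $(1+o(1))\,\rho(\alpha)\,r$; summing over the $O(1)$ parts and adding $|P|=o(r)$ gives $n\le(\rho(\alpha)+o(1))\,r$, and an explicit (not necessarily optimized) evaluation of $\rho$ over the finitely many angles not excluded by Neumann's theorem yields $\rho(\alpha)\le 1.93$ whenever $\alpha\neq 1/3$, while $\rho(1/3)=2$. To sharpen $\rho(1/3)=2$ to the exact value $s_{1/3}(r)=2r-2$ one exploits the extra rigidity at $\alpha=1/3$: there the adjacency matrix of $G$ has $-2$ as an eigenvalue of multiplicity $\ge n-r-1$, so $\lambda_{\min}(G)\ge-2$, and the Cameron--Goethals--Seidel--Shult classification of graphs with least eigenvalue $\ge-2$ (generalized line graphs plus finitely many exceptions) pins down the structure, as in \citep{LS1973}; the matching lower bound is the explicit configuration $\{\tfrac1{\sqrt3}(\sqrt2\,e_i\pm e_r):1\le i\le r-1\}$.

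I expect the main obstacle to be the rigidity step inside the projection. A priori the projected configuration is an $s$-distance set with $s$ of order $C(\alpha)$, whose only generic size bound $O(r^{s})$ is hopelessly superlinear, so one must extract enough structure from the sign vectors — that the exceptional sets $N(x)$ are essentially nested or disjoint, so that within each type only two distances genuinely occur — to make the linear-in-$r$ two-distance bounds apply. It is exactly because this collapse is lossless, merging the two distances into the equiangular $\{0,-1\}$ pattern, that $\alpha=1/3$ alone attains $\sim 2r$; pushing the estimate from $2r+o(r)$ down to the precise $2r-2$ is then a separate, structural matter handled by the least-eigenvalue classification rather than by the code-theoretic bounds.
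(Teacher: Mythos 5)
You should first be aware that the paper does not prove this statement at all: it is quoted verbatim, with citation, from \citep{BDKS18}, and is used only to contrast that work's asymptotic viewpoint with the present paper's pillar-decomposition bounds. The only ``proof content'' in the paper is a two-sentence description of the method of \citep{BDKS18} (negative cliques have bounded size, Ramsey theory produces a large positive clique, one projects onto the orthogonal complement of that clique and reduces to two-angle spherical codes). Your proposal reconstructs exactly this high-level strategy, and several of your computations are sound: the estimate $\|\mathrm{proj}_W x\|^2 \approx 4|N(x)|\alpha^2/(1-\alpha)$ does force $|N(x)|=O(1)$, and $|N(x)|\le 1$ at $\alpha=1/3$; the configuration $\{\tfrac{1}{\sqrt3}(\sqrt2\,e_i\pm e_r)\}$ is the correct $2r-2$ lower bound; and the $\alpha=1/3$ exact value can indeed be closed off structurally as in \citep{LS1973}.

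As a proof, however, the proposal has genuine gaps, partly flagged by yourself. First, ``deleting a further bounded-dimensional subspace to absorb the $O(1/t)$ errors'' is not a legitimate move: the projected inner products are only approximately constant on each type, and passing from an approximate few-distance set to an exact two-distance set is precisely the hard analytic core of \citep{BDKS18}, which requires bespoke bounds for spherical codes whose inner products lie in small intervals (and in a negative range), not the standard Delsarte/SDP bounds for exact two-distance sets. Second, the route to the constant $1.93$ is unsupported: the theorem fixes an arbitrary $\alpha\in(0,1)$, so there are not ``finitely many angles'' to check --- for $\alpha$ not the reciprocal of an odd integer, Neumann's theorem only yields roughly $2r$, and for $\alpha=1/(2k+1)$ the generic two-distance LP bounds do not automatically produce a uniform $1.93r$; in \citep{BDKS18} the constant emerges from a dedicated optimization over their interval-code bounds. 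Third, the rigidity claim that the attachment sets $N(x)$ are ``essentially nested or disjoint'' so that each type is genuinely two-distance is asserted as needed but never established, and you correctly identify it as the main obstacle. So the proposal is a fair roadmap of the cited paper's argument, but not a complete proof, and it should not be mistaken for something this paper itself establishes.
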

At first glance, this seems to contradict \eqref{eqn:greavebnd}, $(32r^2+328r+296)/1089\leq s(r) \leq r(r+1)/2$ for $r \geq 44$; however, it is important to note that this bound in Theorem~\ref{thm:asymp} is asymptotic in dimension \emph{relative to a fixed $\alpha$}.  That is, for large enough $r$ (where ``large enough'' depends on $\alpha$) $s_\alpha(r) \leq 2r -2$; however, it is not true in general that $s(r) \leq 2r-2$ holds for large enough $r$.  As an example, we can see from our computational results \citep[Table.pdf]{link2018} or in Figure~\ref{fig:bound5} that the upper bound on the size of an equiangular set of vectors in $\bR^r$ for $44 \leq r \leq 400$ with angle $1/5$ grows relatively slowly with $r$, while at the same time being significantly smaller than the known bounds for smaller angles.

\bigskip
\noindent 
{\bf Organization.}
The rest of the paper is organized as follows. 
In Section~\ref{reviewsdp}, we review the definition of spherical two-distance sets and the fact that an upper bound of a spherical two-distance set can be solved by 
semidefinite programming (SDP) \citep{BaYu13}. 
In Section~\ref{pd}, we decompose a given equiangular set $X$  into finitely many equivalence classes with respect to 
a fixed $K$-base (Definitions \ref{def:kbase}-\ref{def:ec}).  According to the size of a $K$-base, we 
show the number of equivalence classes in $X$ and an upper bound for each equivalence class (Theorems~\ref{general},~\ref{thm:ec} and~\ref{th:s2}). We also provide a procedure (Subsection \ref{pipeline}) and illustrate an example (Example \ref{ex:rb7}) for computing an upper bound for given dimension $r$ and angle $\alpha$. 
By further analyzing the pillar decomposition in Section~\ref{rb5}, we are able to prove a new relative bound for the angle $\alpha=1/5$ (Theorem \ref{thm:rb5} and Corollary \ref{cry:rb5}). 
In Section~\ref{main}, by applying the procedure, we compute upper bounds for the dimensions $44\leq r\leq 400$ and conclude the computational results in Theorem \ref{thm:main}. 
In Section~\ref{experiment}, we compare the new (computable) relative bounds for $\alpha=1/5, 1/7$ and the SDP bounds \citep[Theorem 3.1]{BaYu13} by experiments. We also interpret the experimental/computational details in this section.   The results in Sections~\ref{pd} and~\ref{rb5} which require technical proofs are proven in the order they appear in the text in Appendices~\ref{App:AA}--\ref{app:AD}.

\section{Review of spherical two-distance sets and  SDP bounds}\label{reviewsdp}
In this section, we review spherical two-distance sets and the fact that an upper bound of a spherical two-distance set can be determined by 
semidefinite programming (SDP). 
A spherical two-distance set is a more general concept than an equiangular set. The SDP method is closely related to Delsarte's method \citep{delsarte1977, musin2009}  and 
harmonic analysis in coding theory \citep{BV2008}. 
We provide 
\citep{Yu14} as a good survey for the interested readers since we will only repeat the key results without proof here. The main point we hope to highlight here is that an upper bound of equiangular sets in ${\mathbb R}^r$ is computable  for any $r>3$, see 
Theorem~\ref{sdp} and Proposition~\ref{angles} below. 
\begin{defn}
We say $X=\{x_1, \ldots, x_s\}\subset {\mathbb R}^r$ is a {\em spherical two-distance set with mutual inner products $\alpha, \beta$}  if  
\begin{equation*}
\langle x_i, x_j \rangle~ =~
\begin{cases}
1,& i = j\\
\alpha~ \text{or}~ \beta, & i \neq j.
\end{cases}
\end{equation*}
\end{defn}

For any spherical two-distance set $X$, we use the notation $|X|$ to denote the cardinality of $X$.  
We use the notation $s\left(r, \alpha, \beta\right)$ to denote the maximum cardinality of a spherical two-distance set with mutual inner products $\alpha, \beta$ in ${\mathbb R}^r$. 
\begin{thm}\label{sdp}\citep[Theorem 3.1]{BaYu13}
Suppose $X\subset {\mathbb R}^r$ is a spherical two-distance set with mutual inner products $\alpha, \beta$.  
An upper bound of $|X|$ is given by 
the solution of a semidefinite programming (SDP) problem.
\end{thm}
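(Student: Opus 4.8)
The plan is to exhibit one explicit semidefinite program, depending only on $r$, $\alpha$, $\beta$ and a truncation degree, whose optimal value is at least $|X|$ for \emph{every} spherical two-distance set $X\subset\bR^r$ with mutual inner products $\alpha,\beta$; the theorem then follows because any such $X$ will be shown to yield a feasible point of that program with objective value $|X|$.

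First I would pass to harmonic analysis on the sphere. Since every $x_i$ is a unit vector, $X\subset S^{r-1}$, and the ambient tool is the family of Gegenbauer (ultraspherical) polynomials $G_k^{(r)}$, $k\ge 0$, normalized by $G_k^{(r)}(1)=1$, for which Schoenberg's positive-definiteness theorem gives: for any finite $Y\subset S^{r-1}$ and any $k$, the matrix $\big(G_k^{(r)}(\langle y,y'\rangle)\big)_{y,y'\in Y}$ is positive semidefinite, equivalently $\sum_{y,y'\in Y}G_k^{(r)}(\langle y,y'\rangle)\ge 0$. I would also record the finer addition formula, which for each $k$ produces a matrix-valued positive-definite kernel $E_k^{(r)}$ on pairs/triples of points of $S^{r-1}$ --- the ingredient behind the three-point semidefinite bound \citep{BV2008}.

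Next, the Delsarte linear relaxation \citep{delsarte1977}, which is already a (diagonal) semidefinite program. For a two-distance set the off-diagonal inner products lie in $\{\alpha,\beta\}$; letting $a$ (resp.\ $b$) be the number of ordered pairs $(x_i,x_j)$, $i\ne j$, with $\langle x_i,x_j\rangle=\alpha$ (resp.\ $\beta$), we have $a+b=|X|^2-|X|$ and, applying Schoenberg with $Y=X$,
\[
|X|\,G_k^{(r)}(1)+a\,G_k^{(r)}(\alpha)+b\,G_k^{(r)}(\beta)\ \ge\ 0,\qquad k=1,2,\ldots
\]
Treating $(|X|,a,b)$ as nonnegative reals subject to these inequalities for $k$ up to a cutoff $d$ together with the marginal relation, and maximizing $|X|$, is a linear program whose optimum already bounds $|X|$ from above; boundedness uses $G_0^{(r)}=1$ and the $k\ge 1$ constraints. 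To reach the bound asserted in the theorem one strengthens this to a genuine SDP by retaining matrix structure: the Gram matrix $M=(\langle x_i,x_j\rangle)=I+\alpha A+\beta(J-I-A)$, where $A$ is the adjacency matrix of the graph of $\alpha$-pairs, satisfies $M\succeq 0$ and $\rk M\le r$, which together with degree data is a semidefinite constraint; and, more powerfully, one introduces counts $t_{(u,v,w)}$ of ordered triples of distinct points realizing each shape $(u,v,w)\in\{1,\alpha,\beta\}^3$ and imposes $\sum_{(u,v,w)}t_{(u,v,w)}\,E_k^{(r)}(u,v,w)\succeq 0$ for each $k\le d$, along with the evident nonnegativity and consistency relations tying the $t$'s to $a,b,|X|$. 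By Schoenberg's theorem and the addition formula, every genuine two-distance set $X\subset S^{r-1}$ with inner products $\alpha,\beta$ yields a feasible point of the resulting program with objective $|X|$; hence its optimum is an upper bound for $|X|$, and so for $s(r,\alpha,\beta)$. One checks that only finitely many shapes occur, that the PSD blocks have size independent of $|X|$, and that truncating at degree $d$ leaves finitely many such blocks, so the program is a bona fide finite-dimensional SDP; symmetrizing the triple counts over the automorphism group of the configuration shrinks it but is not needed for the statement.

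The main obstacle is not the bookkeeping but verifying that each matrix inequality above is genuinely forced by an arbitrary two-distance set: this rests on the precise positive-definiteness of the $G_k^{(r)}$ and of the kernels $E_k^{(r)}$ on $S^{r-1}$ (Schoenberg's theorem, the Funk--Hecke and addition formulas), and on confirming that the marginal relations and the list of admissible triple shapes are exactly those any real $r$-dimensional configuration must satisfy --- an error either invalidates the bound or weakens it unnecessarily. A secondary issue is keeping the relaxation bounded and of manageable size, which is handled by the degree-$d$ truncation together with symmetrization.
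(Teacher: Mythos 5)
Note that the paper does not prove this statement at all---it is quoted verbatim from \citep[Theorem 3.1]{BaYu13}---and your sketch reconstructs essentially the same machinery used there: Schoenberg positive-definiteness of the Gegenbauer polynomials (Delsarte's two-point constraints) strengthened by the Bachoc--Vallentin three-point matrix kernels \citep{BV2008}, applied to the finitely many admissible pair/triple shapes of a two-distance set, so that any such $X$ gives a feasible point of a fixed finite SDP with objective $|X|$. Two small repairs to your write-up: the consistency relation $a+b=|X|^2-|X|$ is quadratic, so the counts must be normalized by $|X|$ (distance/triple distributions) for the program to be a genuine LP/SDP with objective $1+y_\alpha+y_\beta=|X|$; and the condition $\rk M\le r$ is not a semidefinite constraint and should simply be dropped, since the dimension already enters through the $r$-dependent kernels $G_k^{(r)}$ and $E_k^{(r)}$.
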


The concrete SDP formulation can be found in \citep[Theorem 3.1]{BaYu13}. 
As we can see, 
an equiangular set $X$  in ${\mathbb R}^r$ with angle $\alpha$ is a special spherical two-distance set with mutual inner products $\alpha, -\alpha$; that is, $s_\alpha(r) = s(r,\alpha,-\alpha)$.
So an upper bound of $|X|$ for a given $\alpha$ can be computed by running SDP tools. 
Proposition \ref{angles}, which is a direct corollary of \cite[Theorem 2]{LRS1977}, shows an upper bound of $|X|$ is either $2r+3$ or given by $s_\alpha(r)$ with $\alpha=1/(2L-1)$ for finitely many possible $L$'s.  

\begin{proposition}\label{angles}
 For any equiangular set $X\subset {\mathbb R}^r$ with angle $\alpha$, if $|X|>2r+3$, then
$\alpha=1/(2L-1)$ for some $L$ such that  $2\leq L\leq (1+\sqrt{2r})/2$.
 \end{proposition}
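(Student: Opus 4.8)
The plan is to deduce Proposition~\ref{angles} from the classical relative bound together with the known rationality/integrality constraints on the angle $\alpha$, exactly in the spirit of \cite{LRS1977,LS1973}. First I would recall the \emph{absolute/relative bound} for equiangular lines: if $X \subset \bR^r$ is equiangular with angle $\alpha$ and $|X| = s$, then forming the Gram matrix $G = I + \alpha S$ (where $S$ is a symmetric $\pm 1$ off-diagonal \emph{Seidel matrix}, $0$ on the diagonal), positive semidefiniteness of $G$ forces $-1/\alpha$ to be an eigenvalue of $S$ of multiplicity at least $s - r$. If $s > 2r$, this multiplicity is strictly larger than $r$, hence larger than half the size of $S$; the eigenvalue is then ``the'' dominant repeated eigenvalue, and a standard interlacing/switching-class argument (the ``$\ldots$ relative bound'' argument of \cite[Theorem 3.4]{LS1973}) shows that $1/\alpha$ must be an \emph{odd integer}. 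Write $1/\alpha = 2L-1$; since $\alpha < 1$ and $\alpha > 0$ we need $2L - 1 \geq 3$, i.e.\ $L \geq 2$.

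Next I would bring in the quantitative form of the bound, which is where \cite[Theorem 2]{LRS1977} enters. The relative bound states that whenever $1/\alpha^2 > r$, one has
\begin{equation*}
s ~\leq~ \frac{r\left(1 - \alpha^2\right)}{1 - r\alpha^2}.
\end{equation*}
The strategy is contrapositive in $L$: I assume $|X| = s > 2r + 3$ and $1/\alpha = 2L - 1$ with $L$ large, and show this is impossible. Substituting $\alpha = 1/(2L-1)$ into the relative-bound inequality (valid precisely when $(2L-1)^2 > r$) gives an explicit upper estimate for $s$ in terms of $r$ and $L$; one checks by a routine monotonicity computation that for $L$ beyond the threshold $(1+\sqrt{2r})/2$ — equivalently $2L - 1 > \sqrt{2r}$, i.e.\ $(2L-1)^2 > 2r$ — this bound drops below $2r + 3$, contradicting $s > 2r+3$. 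Hence any $X$ with $|X| > 2r+3$ must have $1/\alpha = 2L - 1$ with $L \leq (1 + \sqrt{2r})/2$, which is the assertion. The endpoint cases where $(2L-1)^2$ is close to $2r$ need a slightly careful inequality chase, but no new idea.

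The main obstacle, and the only genuinely substantive step, is the first one: showing that $|X| > 2r$ forces $1/\alpha$ to be an odd integer rather than merely controlling $s$ for a given $\alpha$. This is the classical ``$\alpha = 1/(2L-1)$'' phenomenon and it rests on the integrality of eigenvalue multiplicities of the Seidel matrix $S$ combined with the fact that a symmetric integer matrix with a repeated eigenvalue of multiplicity exceeding half its order forces that eigenvalue to be a (rational, hence) integer, and then a parity argument on the trace of $S^2$ (which equals $s(s-1)$) pins down the odd integer $1/\alpha$. Since Proposition~\ref{angles} is stated as a ``direct corollary'' of \cite[Theorem 2]{LRS1977}, in the write-up I would simply cite that theorem for this step and then carry out the short substitution-and-monotonicity argument above; the bookkeeping to confirm $s > 2r+3$ (rather than the weaker $s > 2r$ one might first obtain) is exactly enough slack to push the threshold to $(1+\sqrt{2r})/2$ and handle the $2r+3$ exceptional value cleanly.
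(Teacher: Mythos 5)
Your proposal is correct, but it takes a genuinely different route from the paper. The paper gives no independent argument: it obtains Proposition~\ref{angles} as an immediate corollary of the Larman--Rogers--Seidel theorem on spherical two-distance sets \citep[Theorem 2]{LRS1977} --- an equiangular set with angle $\alpha$ is a two-distance set with squared distances $2(1\mp\alpha)$, the theorem forces the ratio $\frac{1-\alpha}{1+\alpha}=\frac{k-1}{k}$ for an integer $k$ with $2\le k\le \frac12+\sqrt{r/2}=(1+\sqrt{2r})/2$ whenever the set has more than $2r+3$ points, and solving gives $\alpha=1/(2k-1)$. You instead combine Neumann's theorem \citep[Theorem 3.4]{LS1973} (if $|X|>2r$ then $1/\alpha$ is an odd integer) with the classical relative bound $s\le \frac{r(1-\alpha^2)}{1-r\alpha^2}$ valid when $r\alpha^2<1$: if $(2L-1)^2>2r$ then $r\alpha^2<\tfrac12$, so the bound gives $s<2r$, contradicting $|X|>2r+3$; hence $2L-1\le\sqrt{2r}$, i.e.\ $L\le(1+\sqrt{2r})/2$. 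This checks out, is self-contained for the equiangular case, and in fact proves the slightly stronger statement with threshold $2r$ in place of $2r+3$ (the $2r+3$ in the paper is inherited from the general two-distance setting of \citep{LRS1977}). Two small presentational caveats: the parity of $1/\alpha$ does not follow from the trace identity $\operatorname{tr}(S^2)=s(s-1)$ alone (the classical proof uses a rank argument for $S\bmod 2$), though this is harmless since you defer to the cited theorem for that step; and for the oddness step the natural citation is \citep[Theorem 3.4]{LS1973} rather than \citep[Theorem 2]{LRS1977} --- citing the latter would already subsume the whole proposition and make your relative-bound computation redundant.
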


We remark that $|X|\leq 2r-2 ~(< 2r + 3)$ when $\alpha=1/(2\cdot 2-1)=1/3$ and $r\geq 15$ \citep[Theorem 4.5]{LS1973}. 
Thus by Theorem~\ref{sdp} and Proposition~\ref{angles}, for any $r\geq 15$, 
an upper bound of $|X|$ is either $2r+3$ or 
the maximum of the SDP bounds for $\alpha =1/5, \ldots, 1/(2L-1)$. 
Big progress was made in this spirit in proving the relative bound $276$ for $24\leq  r\leq 60$ and $\alpha=1/5$ \citep{BaYu14}.  
However, it is seen that 
the SDP bounds for $\alpha=1/5$ are greater than Gerzon's bound for $r=137$--$139$  
\citep[Table 3]{BaYu14}. We also provide evidence in Section~\ref{experiment} that 
for $\alpha=1/5, 1/7$,  the SDP bound does not guarantee a non-trivial upper bound if $r$ is sufficiently large, and we expect similar behavior for smaller values of $\alpha$.  In the rest of the paper, we focus on improving the relative bounds with help of the pillar decomposition. 
We remark that all our results  except Theorem \ref{general} hold for any $0<\alpha<1$. However, we are only interested in the $\alpha$'s such that $1/\alpha$ is an odd number greater or equal to $3$.

\section{Pillar decomposition}\label{pd}

\subsection{Gramian matrices}\label{gramian}


For a set of vectors $X=\{x_1, \hdots, x_s\}$ in ${\mathbb R}^r$ $(r\geq 2)$, 
we also denote by $X$ the $r\times s$ matrix with $x_1, \hdots, x_s$ as its column vectors.  
We begin by defining the Gramian matrix of $X$.


\begin{definition}\label{def:gramian}
For any  $X = \{ x_1, \hdots, x_s\} \subset \bR^r$, the \emph{Gramian matrix} of $X$, 
denoted by $G(x_1, \ldots, x_s)$ or 
$G(X)$, is the matrix of mutual inner products of $x_1, \ldots, x_s$, namely
\[
G(X)~=~X^{\top}X~=~
\left(\begin{array}{cccc}\ip{x_1}{x_1} & \ip{x_1}{x_2} & \hdots & \ip{x_1}{x_s} \\  \ip{x_2}{x_1} & \ip{x_2}{x_2} & \hdots & \ip{x_2}{x_s} \\ \vdots & \vdots & \ddots & \vdots \\ \ip{x_s}{x_1} & \ip{x_s}{x_2} & \hdots & \ip{x_s}{x_s} \\  \end{array}\right)_{s\times s}.
\]
\end{definition}

\noindent
It follows directly from Definition~\ref{def:gramian} that $G(X)$ is symmetric and positive semidefinite  for any finite set $X\subset {\mathbb R}^r$. The following lemma is standard and shows that $X$ is a set of linearly independent vectors if and only if $G(X)$ is non-singular.
\begin{lem}[see, for example, \citep{HoJo12}]\label{lem:gramspan} 
For any  $X = \{ x_1, \hdots, x_s\} \subset \bR^r$, $\rk G(X) = \dim \lspan X$.  Further, a linear dependence relation of the columns is one of the vectors and vice versa.
\end{lem}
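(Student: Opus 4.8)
The plan is to exploit the factorization $G(X)=X^{\top}X$ together with the elementary identity $\ker(X^{\top}X)=\ker X$ valid for any real matrix $X$. First I would establish this kernel identity. One inclusion is immediate: if $Xv=0$ then $X^{\top}Xv=0$. For the reverse inclusion, suppose $X^{\top}Xv=0$; then
\[
0~=~v^{\top}X^{\top}Xv~=~\norm{Xv}^2,
\]
so $Xv=0$. Thus $\ker(X^{\top}X)=\ker X$, and the only real ``content'' of the lemma is this positivity trick; everything else is bookkeeping.

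Next I would deduce the rank equality. Viewing $X$ as a linear map $\bR^{s}\to\bR^{r}$ and $G(X)=X^{\top}X$ as a linear map $\bR^{s}\to\bR^{s}$, both have the same $s$-dimensional domain, and by the previous paragraph they have the same kernel. The rank--nullity theorem then gives
\[
\rk G(X)~=~s-\dim\ker G(X)~=~s-\dim\ker X~=~\rk X.
\]
Finally, $\rk X$ is the dimension of the column space of $X$, which is exactly $\lspan\{x_1,\hdots,x_s\}$; hence $\rk G(X)=\dim\lspan X$, as claimed.

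For the ``further'' clause, a vector $v=(c_1,\hdots,c_s)^{\top}$ is a linear dependence relation among the columns of $G(X)$ precisely when $G(X)v=0$, and a linear dependence relation among $x_1,\hdots,x_s$ precisely when $Xv=\sum_{j}c_jx_j=0$. The kernel identity $\ker G(X)=\ker X$ established above says these two conditions are equivalent, which is exactly the assertion. (Alternatively, and more transparently: $G(X)v=0$ says $\ip{x_i}{\sum_j c_j x_j}=0$ for every $i$, so $\sum_j c_j x_j$ is orthogonal to $\lspan X$ while also lying in $\lspan X$, forcing $\sum_j c_j x_j=0$; the converse $\sum_j c_j x_j = 0 \Rightarrow G(X)v = X^{\top}(Xv)=0$ is trivial.)

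I do not expect a genuine obstacle here, since this is a textbook fact; the only point requiring care is that the argument uses that the ground field is $\bR$ (the step $v^{\top}X^{\top}Xv=\norm{Xv}^2\geq 0$), so the statement would need the conjugate transpose over $\bC$ and can fail over fields of positive characteristic. Given that Definition~\ref{def:gramian} is already stated over $\bR$, no further hypotheses are needed.
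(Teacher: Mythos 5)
Your proof is correct and is exactly the standard argument (the kernel identity $\ker(X^{\top}X)=\ker X$ via the positivity trick, rank--nullity, and the observation that $G(X)v=0 \Leftrightarrow Xv=0$ gives the dependence-relation correspondence); the paper itself offers no proof and simply cites this as a textbook fact from \citep{HoJo12}, so there is nothing to compare beyond noting your write-up fills in that standard reference correctly.
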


\subsection{Switching equivalent equiangular sets}

Recall from the introduction that we denote equiangular lines in ${\mathbb R}^r$ by a finite set of unit equiangular vectors $X=\{x_1, \ldots, x_s\}$.  
We note that when passing from lines to vectors, we must make a choice of one of the two unit vectors which span the same line.  
In particular, two unit vectors $-x_i$ and $x_i$ denote the same line. 
However, this choice affects the signs of the inner products.  
 If two sets of vectors represent the same set of lines,  we say they are \emph{switching equivalent} \citep{VaSei66,GodRoy01}. More generally and more precisely, we have the definition below. 
 
\begin{definition}\label{def:se}
Two sets of unit vectors $X$ and $Y$ in ${\mathbb R}^r$ are {\em switching equivalent} 
if there exist a diagonal $(1,-1)$-matrix $B$ and a permutation matrix $C$ such that
\[(CB)^{\top}\cdot G(X)\cdot (CB) ~= ~G(Y).\]
We also say $G(X)$ is {\em switching equivalent} to $G(Y)$, denoted by $G(X)\cong G(Y)$.
\end{definition}

\begin{lemma}\label{lm:seg}
Let $X$ and $Y$ be two sets of unit vectors  in ${\mathbb R}^r$.
If $G(X)\cong G(Y)$, then  $G(X)$ and $G(Y)$ have the same eigenvalues. 
\end{lemma}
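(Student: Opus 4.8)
The statement to prove is Lemma~\ref{lm:seg}: switching-equivalent Gramians have the same eigenvalues. This is essentially the observation that the matrix $CB$ appearing in Definition~\ref{def:se} is orthogonal, so the congruence $(CB)^\top G(X) (CB) = G(Y)$ is in fact a similarity transformation.

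\textbf{Proof proposal.} The plan is to show that the transformation matrix $M := CB$ from Definition~\ref{def:se} is an orthogonal matrix, and then to conclude that $G(Y) = M^\top G(X) M = M^{-1} G(X) M$ is similar to $G(X)$, hence has the same characteristic polynomial and the same eigenvalues. First I would recall that $B$ is a diagonal $(1,-1)$-matrix, so $B^\top = B$ and $B^2 = I$, which gives $B^\top B = I$; and $C$ is a permutation matrix, so $C^\top C = I$ as well. Therefore $M^\top M = (CB)^\top (CB) = B^\top C^\top C B = B^\top B = I$, so $M$ is orthogonal and in particular invertible with $M^{-1} = M^\top$. Substituting into the defining relation, $G(Y) = M^\top G(X) M = M^{-1} G(X) M$, so $G(X)$ and $G(Y)$ are similar matrices. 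Similar matrices have equal characteristic polynomials, since $\det(G(Y) - \lambda I) = \det(M^{-1}(G(X) - \lambda I)M) = \det(M)^{-1}\det(G(X)-\lambda I)\det(M) = \det(G(X) - \lambda I)$, and hence the same eigenvalues with the same multiplicities.

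\textbf{Main obstacle.} There is essentially no obstacle here; the only point requiring any care is verifying that both factors $B$ and $C$ are orthogonal so that their product is, and being careful that the order of multiplication in $CB$ versus $BC$ does not matter for this conclusion (it does not, since $(CB)^\top(CB) = I$ regardless). One could alternatively phrase the argument via the fact that congruence by an orthogonal matrix preserves the spectrum because it coincides with unitary/orthogonal similarity, but the direct determinant computation above is the cleanest route and requires no external input beyond elementary linear algebra.
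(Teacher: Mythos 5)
Your proof is correct and follows essentially the same route as the paper, which simply observes that $CB$ is orthogonal so the congruence in Definition~\ref{def:se} is an orthogonal similarity preserving eigenvalues. Your version merely spells out the orthogonality of $CB$ and the characteristic-polynomial computation that the paper leaves implicit.
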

\begin{proof}
If $G(X)\cong G(Y)$, then by Definition~\ref{def:se}, they are orthogonally similar and hence have the same eigenvalues.
\end{proof}

\begin{lemma}\label{lm:switch}
Let $X$ and $Y$ in ${\mathbb R}^r$ be two equiangular sets with the same angle $\alpha$. 
 If $|X|=|Y|$ and we can order the vectors in 
$X$ and $Y$ as $ \{x_1, \hdots, x_s\}$ and  $\{y_1, \hdots, y_s\}$ respectively such that
$x_i=\pm y_i$ for $i=1, \ldots, s$, then $X$ and $Y$ are switching equivalent  $($i.e., $G(X)\cong G(Y))$.
\end{lemma}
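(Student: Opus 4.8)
The plan is to read the switching-equivalence data directly off the hypothesis. First I would record the sign flips: for each $i \in \{1,\dots,s\}$ define $\epsilon_i \in \{1,-1\}$ by $y_i = \epsilon_i x_i$. This is unambiguous, since $x_i$ is a unit vector and therefore nonzero, so $x_i = y_i$ and $x_i = -y_i$ cannot hold simultaneously. Then I would set $B = \operatorname{diag}(\epsilon_1,\dots,\epsilon_s)$, a diagonal $(1,-1)$-matrix in the sense of Definition~\ref{def:se}, and take the permutation matrix to be $C = I_s$: no reordering is needed because the statement already supplies matched orderings of $X$ and $Y$.

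It then remains a one-line verification. Viewing $X$ and $Y$ as the $r\times s$ matrices of their (ordered) columns as in Subsection~\ref{gramian}, the relations $y_i = \epsilon_i x_i$ say precisely that $Y = XB$. Hence, using Definition~\ref{def:gramian} and $B^\top = B$,
\[
G(Y) \;=\; Y^\top Y \;=\; B^\top X^\top X\, B \;=\; B^\top G(X)\, B \;=\; (CB)^\top G(X)\,(CB),
\]
which is exactly the condition $G(X) \cong G(Y)$ of Definition~\ref{def:se}; thus $X$ and $Y$ are switching equivalent.

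I do not expect a genuine obstacle: the content of the lemma is essentially the observation that flipping the signs of some of the vectors corresponds to conjugating the Gramian by a diagonal $(1,-1)$-matrix. The only points worth stating carefully are that $B$ is well defined (well-definedness of the $\epsilon_i$) and that the permutation part of the equivalence may be taken trivial here. Note that the equiangularity hypotheses and the equality $|X| = |Y|$ are not really used beyond ensuring the vectors are nonzero and that the two matched lists have common length $s$; they are present because this is the setting in which the lemma will be invoked.
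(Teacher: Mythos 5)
Your proof is correct and matches the paper's own argument: the paper likewise produces a diagonal $(1,-1)$-matrix $B$ and a permutation matrix $C$ with $XCB=Y$ and conjugates the Gramian, the only cosmetic difference being that you note $C$ may be taken to be the identity since the orderings are already matched. No gaps.
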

\begin{proof}
Note there exist a diagonal $(1,-1)$-matrix $B$ and a permutation matrix $C$ such that 
$XCB=Y$ and hence $(CB)^{\top}\cdot G(X)\cdot (CB) ~= ~G(Y)$. By Definition \ref{def:se}, $X$ and $Y$ are switching equivalent. 
\end{proof}
We further remark that if $G(X)\cong G(Y)$, there need not exist  a diagonal $(1,-1)$-matrix $B$ and a permutation matrix $C$ such that $XCB=Y$.  For example, if $O$ is an orthogonal matrix, then $G(OX) = G(X)$.  The key idea is that given a positive semidefinite matrix $G$, we can always factor it (via, for example, an eigendecomposition) as $G = X^T X$ so that $G = G(X)$.  Thus we will work on the level of Gramian matrices.

\subsection{Base size and $K$-base}

 Before we investigate more structures associated to equiangular sets,  we  provide a basic fact (Lemma \ref{pp:basic}) from linear algebra.
 Following Seidel's spirit,  we will often decompose matrices using building blocks of the $s \times s$ identity matrix $I_s$ and the $s \times s$ all-one matrix $J_s$, which we will denote by $I$ and $J$, respectively, when $s$ is clear from context.  


\begin{lemma}\label{pp:basic}
Consider an $s\times s$ matrix $G$. If all diagonal entries of $G$ are the same, say $a$, and all 
off-diagonal entries are the same, say $b$, then $G =  (a-b) I_s + b J_s$, and hence $G$ has a simple eigenvalue $\lambda_1 = a+(s-1)b$ and an eigenvalue $\lambda_2=a-b$ with multiplicity $s-1$. 
\end{lemma}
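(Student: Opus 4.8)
The statement to prove is Lemma~\ref{pp:basic}: if $G$ is $s \times s$ with all diagonal entries equal to $a$ and all off-diagonal entries equal to $b$, then $G = (a-b)I_s + bJ_s$, with a simple eigenvalue $a + (s-1)b$ and an eigenvalue $a - b$ of multiplicity $s-1$.

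\medskip

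The plan is to proceed in two short steps. First I would verify the identity $G = (a-b)I_s + bJ_s$ by comparing entries: the $(i,i)$ entry of the right-hand side is $(a-b)\cdot 1 + b \cdot 1 = a$, and for $i \neq j$ the $(i,j)$ entry is $(a-b)\cdot 0 + b \cdot 1 = b$, which matches $G$ entrywise. Second, I would read off the spectrum from this decomposition. The all-ones matrix $J_s$ has rank one: the all-ones vector $\mathbf{1} = (1,\dots,1)^\top$ satisfies $J_s \mathbf{1} = s\,\mathbf{1}$, so $s$ is an eigenvalue of $J_s$ with eigenvector $\mathbf{1}$, while any vector orthogonal to $\mathbf{1}$ (i.e.\ with coordinate sum zero) lies in the kernel of $J_s$, giving eigenvalue $0$ with multiplicity $s-1$. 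Since $G = (a-b)I_s + bJ_s$ is an affine function of $J_s$, it shares the eigenvectors of $J_s$, and its eigenvalues are obtained by applying $t \mapsto (a-b) + bt$ to the eigenvalues of $J_s$: the eigenvector $\mathbf{1}$ gives $\lambda_1 = (a-b) + bs = a + (s-1)b$ (simple, since the $s$-eigenspace of $J_s$ is one-dimensional), and the orthogonal complement gives $\lambda_2 = (a-b) + b\cdot 0 = a - b$ with multiplicity $s-1$.

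\medskip

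There is essentially no obstacle here; this is a standard computation and the only thing to be careful about is the bookkeeping of multiplicities, namely that the $\lambda_1$-eigenspace is exactly one-dimensional (spanned by $\mathbf{1}$) and the $\lambda_2$-eigenspace is the $(s-1)$-dimensional hyperplane $\mathbf{1}^\perp$, so that the two eigenspaces together span $\bR^s$ and account for all $s$ eigenvalues. (In the degenerate case $b = 0$ the two eigenvalues coincide, $\lambda_1 = \lambda_2 = a$, which is consistent with $G = aI_s$; the statement as phrased still holds with the understanding that "simple" refers to the contribution of the $\mathbf{1}$ direction.) I would present the argument in the two-step form above, since it makes transparent why the same structure will recur when Seidel-style block decompositions into $I$ and $J$ are used later in the paper.
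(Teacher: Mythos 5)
Your proof is correct: the entrywise verification of $G=(a-b)I_s+bJ_s$ followed by the spectral decomposition of the rank-one matrix $J_s$ (eigenvalue $s$ on $\lspan\{\mathbf{1}\}$, eigenvalue $0$ on $\mathbf{1}^{\perp}$) and the affine map $t\mapsto (a-b)+bt$ is exactly the standard argument, and your bookkeeping of the multiplicities is right. The paper itself states this lemma as a known linear-algebra fact and gives no proof, so there is nothing to compare beyond noting that your write-up (including the remark on the degenerate case $b=0$, which never occurs in the paper's applications since there $b=\pm\alpha\neq 0$) is precisely the proof the authors implicitly rely on.
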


Following the above lemma, we present an easy observation about the structure of Gramian matrices. As what has been pointed out at the very beginning of \citep[Section 4]{LS1973}, we have Proposition~\ref{lm:easy}.
Here we generalize the original setting in \citep{LS1973} since we consider the Gramian matrices of switching equivalent equiangular sets. 
 \begin{proposition}\label{lm:easy}
 If there exist $k \geq 2$ equiangular vectors $p_1, \ldots, p_k$ with angle $\alpha$ such that \[G(p_1, \ldots, p_k)~\cong~ (1+\alpha) I -\alpha J,
\] 
then $k\leq (1/\alpha)+1$. 
Furthermore, if $k<(1/\alpha)+1$, then the vectors  $p_1, \ldots, p_k$ are linearly independent, and if
$k=(1/\alpha)+1$, then the vectors  $p_1, \ldots, p_k$ are linearly dependent.  When $k=(1/\alpha)+1$ and $G(p_1, \ldots, p_k)~=~ (1+\alpha) I -\alpha J$,  the linear dependence relation is $\sum_{j=1}^k p_j =0$. 
\end{proposition}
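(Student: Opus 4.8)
The plan is to read off the spectrum of $M \triangleq (1+\alpha) I - \alpha J$ from Lemma~\ref{pp:basic}, and then exploit the two structural facts already available: that a Gramian matrix is positive semidefinite, and that switching equivalence preserves eigenvalues (Lemma~\ref{lm:seg}). Concretely, applying Lemma~\ref{pp:basic} with $a=1$ and $b=-\alpha$ (so that $(a-b)I_k+bJ_k=(1+\alpha)I-\alpha J$), the $k\times k$ matrix $M$ has a simple eigenvalue $1-(k-1)\alpha$, with the all-ones vector $\mathbf 1$ as eigenvector since $J\mathbf 1 = k\mathbf 1$, together with the eigenvalue $1+\alpha$ of multiplicity $k-1$.

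For the cardinality bound, set $G\triangleq G(p_1,\ldots,p_k)$. Being a Gramian matrix, $G$ is positive semidefinite, and since $G\cong M$, Lemma~\ref{lm:seg} gives that $G$ and $M$ have the same eigenvalues. Because $1+\alpha>0$ always, positive semidefiniteness of $G$ forces the remaining eigenvalue to satisfy $1-(k-1)\alpha\geq 0$, i.e.\ $k\leq (1/\alpha)+1$.

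For the dependence dichotomy I would invoke Lemma~\ref{lem:gramspan}, which gives $\rk G = \dim\lspan\{p_1,\ldots,p_k\}$. If $k<(1/\alpha)+1$ then $1-(k-1)\alpha>0$, so every eigenvalue of $G$ is strictly positive, $G$ is nonsingular, $\rk G = k$, and the $p_j$ are linearly independent. If $k=(1/\alpha)+1$ then $1-(k-1)\alpha=0$, so $G$ is singular, $\rk G = k-1<k$, and the $p_j$ are linearly dependent. Finally, when in addition $G=M$ exactly (not merely up to switching), one has $M\mathbf 1 = \bigl(1-(k-1)\alpha\bigr)\mathbf 1 = 0$, so $\mathbf 1\in\ker G$; by the ``vice versa'' clause of Lemma~\ref{lem:gramspan}, a vector in $\ker G$ encodes a linear dependence among the $p_j$ with the same coefficients, and taking $\mathbf 1$ yields $\sum_{j=1}^k p_j = 0$.

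I do not expect a real obstacle here: the argument is essentially bookkeeping around Lemmas~\ref{pp:basic},~\ref{lm:seg}, and~\ref{lem:gramspan}. The one point that needs care is the distinction between the hypothesis $G\cong M$ (which suffices for the cardinality bound and for the rank statements, as switching is an orthogonal conjugation and hence preserves both rank and spectrum) and the stronger hypothesis $G=M$ required to pin down the explicit relation $\sum_{j=1}^k p_j=0$; a switching would replace $\mathbf 1$ by a different $(1,-1)$-vector in the kernel, changing the coefficients of the dependence.
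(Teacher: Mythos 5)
Your proposal is correct and follows essentially the same route as the paper's proof: both read off the spectrum of $(1+\alpha)I-\alpha J$ via Lemma~\ref{pp:basic}, use Lemma~\ref{lm:seg} together with positive semidefiniteness to get $1-(k-1)\alpha\geq 0$, and invoke Lemma~\ref{lem:gramspan} for the rank dichotomy and for extracting $\sum_j p_j=0$ from the all-ones kernel vector when $G=(1+\alpha)I-\alpha J$ exactly. Your closing remark about why equality (rather than mere switching equivalence) is needed for the explicit relation matches the paper's treatment as well.
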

\begin{proof}
By Lemma \ref{lm:seg},  $G(p_1, \ldots, p_k)$  and $(1+\alpha) I -\alpha J$ have same eigenvalues and thus by Lemma \ref{pp:basic}, $G(p_1, \ldots, p_k)$ has a simple eigenvalue $\lambda_1=1-(k-1)\alpha$ and 
an eigenvalue $\lambda_2=1+\alpha>0$ with multiplicity $k-1$. Since the Gramian matrix is positive semidefinite, the eigenvalue $\lambda_1$   should be non-negative. So 
\[\lambda_1~=~1-(k-1)\alpha\geq 0 \;\;\text{or, equivalently,}  \;\;k\leq (1/\alpha)+1.\] 
Furthermore, if $k<(1/\alpha)+1$, then the eigenvalue $\lambda_1$ satisfies \[\lambda_1=1-(k-1)\alpha~=~\alpha \left((1/\alpha)+1-k\right)~>~0.\]
Both eigenvalues are nonzero, so the Gramian matrix is full rank and hence by Lemma \ref{lem:gramspan}, $p_1, \ldots, p_k$  are linearly independent. 
If $k=(1/\alpha)+1$, we similarly have $\lambda_1=0$. So  $p_1, \ldots, p_k$ are linearly dependent. If further $G(p_1, \ldots, p_k)~=~ (1+\alpha) I -\alpha J$, then the all-ones vectors is an eigenvector for the eigenvalue $0$, yielding the linear dependence relation (Lemma~\ref{lem:gramspan}).
\end{proof}

\noindent
Proposition \ref{lm:easy} inspires two new concepts: base size and $K$-base (Definition \ref{def:kbase}). 
\begin{definition}\label{def:kbase}
Let $X\subset {\mathbb R}^r$ be an equiangular set  with angle $\alpha$. The {\em base size} of $X$, $\kx$ is defined as  
\begin{equation}\label{eq:k}
\kx\;\triangleq\; \max\{k \in {\mathbb N}^{+}|\exists~ p_1, \ldots, p_k\in X~\text{s.t.}~G(p_1, \ldots, p_k)\cong (1+\alpha) I -\alpha J\}.
\end{equation}
Let $K=\kx$. If $K$ vectors $p_1, \ldots, p_K\in X$ satisfy $G(p_1, \ldots, p_K)\cong (1+\alpha) I -\alpha J$,
 then we say $\{p_1, \ldots, p_K\}$ is a {\em $K$-base} of $X$. 
\end{definition}
It follows from Definitions~\ref{def:se} and~\ref{def:kbase} that if $\{p_1, \ldots, p_K\}$ is a $K$-base of $X$,  then $\{p_1, \ldots, p_K\}$ is switching equivalent to 
an equiangular set $\{\tilde{p}_1, \ldots, \tilde{p}_K\}$, which has $G(\tilde{p}_1, \ldots, \tilde{p}_K)= (1+\alpha) I -\alpha J$, 
and further $X$ is then also switching equivalent to $(X \backslash\{p_1, \ldots, p_K\}) \bigcup \{\tilde{p}_1, \ldots, \tilde{p}_K\}$.  Thus we assume without loss of generality that
\begin{equation}\label{eq:skbase}
G(p_1, \ldots, p_K)= (1+\alpha) I -\alpha J.
\end{equation}
That means for any $1\leq i<j\leq K$, $\langle p_i, p_j\rangle=-\alpha$.
Using graph theory terminology, a $K$-base is a \emph{negative clique}.  Lemma \ref{lm:easy} shows that 
the size of a $K$-base, that is $\kx$, is at most $(1/\alpha)+1$.
We next show that 
$\kx$ is at least $2$ if $|X|\geq 2$ (Proposition \ref{lm:easy2}).

\begin{proposition}\label{lm:easy2}
Let $X\subset {\mathbb R}^r$ be an equiangular set with angle $\alpha$. 
If $|X|\geq 2$, then $\kx\geq 2$.
\end{proposition}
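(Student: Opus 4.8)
The plan is to take any two vectors from $X$ and show that, after a suitable sign change, they form a negative clique of size $2$, which forces $\kx \geq 2$. First I would pick two distinct vectors $x_i, x_j \in X$ (possible since $|X| \geq 2$). By the definition of an equiangular set, $\langle x_i, x_j \rangle = \pm \alpha$. If $\langle x_i, x_j \rangle = -\alpha$, then $G(x_i, x_j) = \begin{pmatrix} 1 & -\alpha \\ -\alpha & 1 \end{pmatrix} = (1+\alpha) I_2 - \alpha J_2$, so $\{x_i, x_j\}$ is already in the required form. If instead $\langle x_i, x_j \rangle = +\alpha$, I would replace $x_j$ by $-x_j$ (equivalently, act by the diagonal $(1,-1)$-matrix $B = \mathrm{diag}(1,-1)$), which sends $G(x_i, x_j)$ to $(1+\alpha)I_2 - \alpha J_2$; hence $G(x_i, x_j) \cong (1+\alpha) I_2 - \alpha J_2$ in the sense of Definition~\ref{def:se}.

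In either case we have exhibited $k = 2$ vectors $p_1 = x_i$, $p_2 = x_j$ (or $-x_j$) in $X$ with $G(p_1, p_2) \cong (1+\alpha) I - \alpha J$. By the definition of base size in~\eqref{eq:k}, $\kx$ is the maximum such $k$, so $\kx \geq 2$.

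There is essentially no obstacle here: the statement is a direct unwinding of Definition~\ref{def:kbase} together with the trivial observation that a two-element equiangular set can always be switched to a negative edge. The only point requiring (minimal) care is to phrase the sign flip correctly in terms of the switching-equivalence relation of Definition~\ref{def:se}, i.e. to note that the single diagonal $(1,-1)$-matrix $B$ (with the identity permutation) realizes the equivalence. I would also remark that this is sharp in the sense that $\kx = 2$ exactly when $X$ contains no switched negative triangle, but that observation is not needed for the proposition.
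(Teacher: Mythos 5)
Your proposal is correct and follows essentially the same argument as the paper: pick two distinct vectors, and if their inner product is $+\alpha$, flip one sign so that the Gramian becomes switching equivalent to $(1+\alpha)I-\alpha J$, whence $\kx\geq 2$ by Definition~\ref{def:kbase}. The paper just phrases the sign flip via Lemma~\ref{lm:switch} rather than writing the diagonal matrix explicitly; there is no substantive difference.
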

\begin{proof}
Take two different vectors $x_1, x_2\in X$.  
If $\langle x_1, x_2\rangle=-\alpha$, then by Definition \ref{def:kbase}, 
$\kx\geq 2$. If $\langle x_1, x_2\rangle=\alpha$, then $\langle -x_1, x_2\rangle=-\alpha$.
By Lemma \ref{lm:switch}, 
 $\{x_1, x_2\}$ is switching equivalent to $\{-x_1, x_2\}$. So 
 $G(x_1, x_2)\cong G(-x_1, x_2)=(1+\alpha)I-\alpha J$. Again,  by Definition \ref{def:kbase}, 
$\kx\geq 2$. 
\end{proof}

\subsection{Equivalence classes w.r.t.~a $K$-base}

 Let $X$ in ${\mathbb R}^r$ be an equiangular set  with angle $\alpha$. Let $K=\kx$. We fix a $K$-base  $\{p_1, \ldots, p_K\}$ of $X$ satisfying the condition (\ref{eq:skbase}). 
Let $P$ be the linear subspace of ${\mathbb R}^r$ spanned by $p_1, \ldots, p_K$ and let ${P}^{\bot}$ be the orthogonal complement of $P$ in ${\mathbb R}^r$. 
For any  $x\in X\backslash \{p_1, \ldots, p_K\}$, 
we first present Proposition \ref{lm:project} to describe the projection of 
$x$ onto the subspace $P$. Note that part (1) originally comes from \citep{LS1973}.  
 \begin{proposition}\label{lm:project}
For  any $x\in X\backslash \{p_1, \ldots, p_K\}$, suppose $(\langle x, p_1\rangle, ~\ldots, ~\langle x, p_K\rangle)^{\top} = \alpha\cdot \epsilon$,
where $\epsilon$ is a $(1, -1)$-vector in ${\mathbb R}^K$, namely
\[\epsilon~=~\left(\epsilon^{(1)}, ~\ldots,~ \epsilon^{(K)}\right)^{\top},\;\;\;\epsilon^{(i)}~=~\pm 1.\]
 If we decompose $x$ in $P$ and $P^{\bot}$  as $x = h + c$, where $h \in P$ and $c \in P^{\bot}$,
then 
\begin{itemize}
\item[{\em (1)}] for $K=(1/\alpha)+1$,  
$h~=~\frac{1}{K}\left(\epsilon^{(1)}p_1+\cdots+\epsilon^{(K)}p_K\right)$
and $\langle h, h\rangle~=~\alpha$, and 
\item[{\em (2)}]for $K<(1/\alpha)+1$, 
$h=a^{(1)}p_1+\cdots+a^{(K)}p_K$, 
where
\begin{equation}\label{eq:h}
(a^{(1)}, \ldots, a^{(K)})^{\top} ~=~ \alpha~\left((1+\alpha)I-\alpha J\right)^{-1}~\epsilon,
\end{equation}
and 
$\langle h, h\rangle$ is given by 
\begin{equation}\label{eq:l}
 \ell(K,n)~=~ \alpha^2 \frac{4\alpha\cdot n\cdot\left(n-K\right) +\left(1+\alpha\right)K}{(1+\alpha)\left(1+\alpha -K\alpha\right)},
\end{equation}
where $n$ is the number of positive signs among   $\epsilon^{(1)}, \ldots, \epsilon^{(K)}$. 
Furthermore, we have
\begin{equation}\label{eq:beta}
\begin{cases}
0 < \ell(K,n) < \alpha,  & K-\frac{(1/\alpha)+1}{2} <n\leq \lfloor K/2\rfloor\\
\ell(K,n) ~=~\alpha,& n= K-\frac{(1/\alpha)+1}{2}\\
\alpha <\ell(K,n) < 1,  & 1\leq n <K- \frac{(1/\alpha)+1}{2}.
\end{cases}
\end{equation}
\end{itemize}
\end{proposition}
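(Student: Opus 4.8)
The plan is to exploit the fact that $h$ is by definition the orthogonal projection of $x$ onto $P = \lspan\{p_1,\dots,p_K\}$, so $h$ is the unique vector in $P$ with $\langle h, p_i\rangle = \langle x, p_i\rangle = \alpha\,\epsilon^{(i)}$ for all $i$, while $c = x-h$ is orthogonal to $P$. First I would treat the generic case (2), where $K < (1/\alpha)+1$; by Proposition~\ref{lm:easy} the vectors $p_1,\dots,p_K$ are then linearly independent, so the Gramian $G(p_1,\dots,p_K) = (1+\alpha)I - \alpha J$ is invertible. Writing $h = \sum_j a^{(j)} p_j$ and taking inner products with each $p_i$ gives the linear system $\big((1+\alpha)I-\alpha J\big)\,a = \alpha\,\epsilon$, which yields~\eqref{eq:h} immediately. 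For~\eqref{eq:l}, compute $\langle h,h\rangle = a^{\top} G(p_1,\dots,p_K)\, a = a^{\top}(\alpha\epsilon) = \alpha\, a^{\top}\epsilon = \alpha^2\, \epsilon^{\top}\big((1+\alpha)I-\alpha J\big)^{-1}\epsilon$. The inverse of $(1+\alpha)I - \alpha J$ has the form $uI + vJ$ with $u = 1/(1+\alpha)$ and $v = \alpha/\big((1+\alpha)(1+\alpha-K\alpha)\big)$ (solved by the standard rank-one update / Sherman–Morrison, using $J^2 = KJ$). Then $\epsilon^{\top}\epsilon = K$ and $\epsilon^{\top}J\epsilon = (\mathbf 1^{\top}\epsilon)^2 = (n - (K-n))^2 = (2n-K)^2$, and substituting gives $\langle h,h\rangle = \alpha^2\big(uK + v(2n-K)^2\big)$; expanding $(2n-K)^2 = 4n(n-K)+K^2$ and simplifying over the common denominator $(1+\alpha)(1+\alpha-K\alpha)$ produces exactly~\eqref{eq:l}.

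For part (1), when $K = (1/\alpha)+1$, Proposition~\ref{lm:easy} gives the linear dependence $\sum_j p_j = 0$, so the $p_j$ span a $(K-1)$-dimensional space and the above system is consistent but not uniquely solved in the $a^{(j)}$; instead I would verify directly that $h := \frac1K\sum_j \epsilon^{(j)} p_j$ satisfies $\langle h, p_i\rangle = \alpha\epsilon^{(i)}$ for every $i$. Indeed $\langle h,p_i\rangle = \frac1K\big(\epsilon^{(i)}(1+\alpha) - \alpha\sum_{j\ne i}\epsilon^{(j)}\big) = \frac1K\big(\epsilon^{(i)}(1+\alpha) - \alpha(\mathbf 1^\top\epsilon - \epsilon^{(i)})\big) = \frac{\epsilon^{(i)}(1+2\alpha) - \alpha\,\mathbf 1^\top\epsilon}{K}$; here one uses $K\alpha = 1+\alpha$ to rewrite $1+2\alpha = K\alpha + \alpha$. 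This does not reduce to $\alpha\epsilon^{(i)}$ for an arbitrary choice of the $\epsilon^{(j)}$, which reflects the fact that here $\epsilon$ is \emph{not} free: since $x \ne p_j$ and $\kx = K$ is maximal, the sign vector $\epsilon$ cannot be a $\pm1$-column that would extend the negative clique, and the constraint forced by the dependence $\sum p_j = 0$ — namely $\mathbf 1^\top\epsilon = \sum_j \langle x, p_j\rangle/\alpha$ and $\langle x, \sum_j p_j\rangle = 0$ forces $\sum_j \epsilon^{(j)} = 0$ — pins things down. With $\mathbf 1^\top \epsilon = 0$ the displayed expression collapses to $\alpha\,\epsilon^{(i)}$, confirming $h$ is the projection; then $\langle h,h\rangle = \langle h, x\rangle = \frac1K\sum_j \epsilon^{(j)}\langle p_j, x\rangle = \frac1K\sum_j \epsilon^{(j)}\cdot\alpha\epsilon^{(j)} = \frac{K\alpha}{K} = \alpha$. (One could alternatively derive part (1) as the limit of~\eqref{eq:l} as $K\alpha \to 1+\alpha$ with $2n - K \to 0$, but the direct check is cleaner.)

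Finally, for the trichotomy~\eqref{eq:beta} I would work directly with the closed form~\eqref{eq:l}. Since $K \le (1/\alpha)+1$ with equality excluded in case (2), the denominator $(1+\alpha)(1+\alpha-K\alpha) > 0$, and $\alpha^2 > 0$, so the sign and size of $\ell(K,n) - \alpha$ and of $\ell(K,n)$ and $\ell(K,n)-1$ are governed by the numerator. Compute $\ell(K,n) - \alpha$ and clear denominators: I expect $\ell(K,n) - \alpha = \dfrac{\alpha^3 \cdot 4\, n\,(n-K) + \alpha^2(1+\alpha)K - \alpha(1+\alpha)(1+\alpha-K\alpha)}{(1+\alpha)(1+\alpha-K\alpha)}$, and the numerator factors as a constant multiple of $\big(n - (K - \tfrac{(1/\alpha)+1}{2})\big)\cdot(\text{something})$ — the value $n = K - \tfrac{(1/\alpha)+1}{2}$ is exactly the root, which is why it appears as the boundary case. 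A symmetric computation for $\ell(K,n) - 1$ handles the upper inequality, and monotonicity of $n \mapsto n(n-K)$ on the relevant integer range $1 \le n \le \lfloor K/2\rfloor$ (it is decreasing there, since the parabola has vertex at $n = K/2$) together with the endpoint values gives strictness away from the boundary. The main obstacle I anticipate is purely bookkeeping: carrying the substitution $1/\alpha = 2L-1$ (equivalently $K\alpha \le 1+\alpha$) through the algebra so that the quadratic-in-$n$ numerators factor transparently and the boundary value $n = K - \tfrac{(1/\alpha)+1}{2}$ drops out of the factorization rather than being checked by hand; once the factored forms are in place, the sign analysis is routine.
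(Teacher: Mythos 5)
Your route for part (2) is essentially the paper's: you set up the same linear system $\bigl((1+\alpha)I-\alpha J\bigr)a=\alpha\epsilon$, invert $(1+\alpha)I-\alpha J$ explicitly, and evaluate $\alpha^2\epsilon^{\top}\bigl((1+\alpha)I-\alpha J\bigr)^{-1}\epsilon$ using $\epsilon^{\top}\epsilon=K$ and $\epsilon^{\top}J\epsilon=(2n-K)^2$, which reproduces \eqref{eq:h} and \eqref{eq:l}; your shortcut $\langle h,h\rangle=a^{\top}(\alpha\epsilon)$ is a mild simplification of the paper's fuller expansion. For part (1) the paper only cites Lemmens--Seidel, whereas you verify the formula directly; your key ingredient $\sum_j\epsilon^{(j)}=0$, deduced from $\sum_j p_j=0$ via $\langle x,\sum_j p_j\rangle=0$, is exactly the paper's Lemma~\ref{lm:project1973}, so this part is sound in substance. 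One slip: your display double-counts the diagonal term; the correct expression is $\langle h,p_i\rangle=\tfrac1K\bigl(\epsilon^{(i)}(1+\alpha)-\alpha\,\mathbf{1}^{\top}\epsilon\bigr)$, not $\tfrac1K\bigl(\epsilon^{(i)}(1+2\alpha)-\alpha\,\mathbf{1}^{\top}\epsilon\bigr)$, and with $\mathbf{1}^{\top}\epsilon=0$ and $K\alpha=1+\alpha$ it does collapse to $\alpha\epsilon^{(i)}$ as you assert.

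The one place where the sketch, as written, would not go through is the upper inequality $\ell(K,n)<1$ in \eqref{eq:beta}. Your parenthetical treats $1/\alpha=2L-1$ as ``equivalently $K\alpha\le 1+\alpha$,'' but the latter is just $K<(1/\alpha)+1$ and is strictly weaker; it does not imply $\ell(K,n)<1$. For instance, with $\alpha=0.49$, $K=3$, $n=1$ one has $K<(1/\alpha)+1$ and $1<K-\tfrac{(1/\alpha)+1}{2}$ fails to be violated, yet $\ell(3,1)=\alpha^2(3-5\alpha)/\bigl((1+\alpha)(1-2\alpha)\bigr)\approx 4.4>1$. What is actually needed is $K\alpha\le 1$, which follows from the integrality of $1/\alpha$ (the paper's standing assumption) together with $K<(1/\alpha)+1$; with $K\alpha\le 1$, clearing denominators in $\ell(K,1)<1$ reduces to $4\alpha^3+K\alpha(1+3\alpha)(1-\alpha)<(1+\alpha)^2$, which holds for $0<\alpha<1$, and monotonicity in $n$ finishes. (Alternatively one can get the bound geometrically from $\langle h,h\rangle=1-\langle c,c\rangle$.) By contrast, the comparison with $\alpha$ does factor as cleanly as you predict: the numerator of $\ell(K,n)-\alpha$ equals $\alpha\,(2\alpha n-1-\alpha)\cdot 2\alpha\bigl(n-(K-\tfrac{(1/\alpha)+1}{2})\bigr)$ over the positive denominator, and the first factor is negative for $n\le K/2$, so the sign is governed by $n-\bigl(K-\tfrac{(1/\alpha)+1}{2}\bigr)$. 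So the $\ell-1$ computation is not ``symmetric'' to the $\ell-\alpha$ one and needs the extra input $K\le 1/\alpha$ made explicit; for what it is worth, the paper's own treatment of this point (completed square plus a monotonicity claim) is also terse, so spelling it out would strengthen the argument.
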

\begin{proof}
Part (1) was shown in \citep[pages 501--502]{LS1973}.
We only prove part (2). 
By Proposition \ref{lm:easy}, if $K<(1/\alpha)+1$, then $p_1, \ldots, p_{K}$ are linearly independent. 
So $p_1, \ldots, p_{K}$ form a basis of $P$. 
Since $h\in P$, we can assume 
$h= \sum_{j=1}^{K}a^{(j)}p_j$.
Note by (\ref{eq:skbase}), $\langle p_i, p_j\rangle = -\alpha$ $(i\neq j)$. 
So for each $i=1, \ldots, K$, we have
\begin{equation}\label{eq:pp41}
\langle h, p_i\rangle ~=~\langle \sum_{j=1}^{K}a^{(j)}p_j, p_i \rangle ~=~a^{(i)}\cdot\langle p_i, p_i\rangle + \sum_{j\neq i}a^{(j)}\cdot\langle p_j, p_i\rangle~=~a^{(i)} -\alpha\cdot \sum_{j\neq i}a^{(j)}.
\end{equation}
On the other hand, since $x=h+c$, $h \in P$ and $c \in P^{\bot}$, we have
\begin{equation}\label{eq:pp42}
\langle h, p_i\rangle  ~=~\langle x, p_i\rangle ~=~\alpha\cdot \epsilon^{(i)}.
\end{equation}
Note the left-hand sides of equalities (\ref{eq:pp41}) and (\ref{eq:pp42}) are the same. So we have a system of $K$ linear equations in $a^{(1)},\cdots, a^{(K)}$ which has a unique solution
\[a ~=~ \left(a^{(1)}, \ldots, a^{(K)}\right)^{\top} ~=~\alpha~\left((1+\alpha)I-\alpha J\right)^{-1}~\epsilon.\]
We denote  the $r \times K$ matrix with column vectors $p_1, \ldots, p_K$ by $\left( \begin{array}{cccc}p_1 & p_2 & \hdots & p_K \end{array}\right)$. 
Then it is straightforward to calculate by linear algebra  that 
\begin{align*}
\langle h, h\rangle &~=~  \langle \sum_{j=1}^{K}a^{(j)} p_j, \;\;\sum_{j=1}^{K}a^{(j)} p_j\rangle \nonumber \\
&~=~a^{\top}\left( \begin{array}{cccc}p_1 & p_2 & \hdots & p_K \end{array}\right)^{\top}\left( \begin{array}{cccc}p_1 & p_2 & \hdots & p_K \end{array}\right)a  \nonumber\\
&~=~a^{\top}\left(\left(1+\alpha\right)I-\alpha J\right)a  \nonumber\\
&~=~\alpha\epsilon^{\top} \left(\left((1+\alpha)I-\alpha J\right)^{-1}\right)^{\top}~\left((1+\alpha)I-\alpha J\right)~\alpha\left((1+\alpha)I-\alpha J\right)^{-1}\epsilon  \nonumber\\
&~=~\alpha^2~\epsilon^{\top}  \left((1+\alpha)I-\alpha J\right)^{-1}\epsilon  \nonumber\\
&~=~\alpha^2~\epsilon^{\top}  \frac{(1+\alpha-K\alpha)I+\alpha J}{(1+\alpha)(1+\alpha-K\alpha)} \epsilon \nonumber\\
&~=~\frac{\alpha^2}{(1+\alpha)(1+\alpha-K\alpha)}~  \left((1+\alpha-K\alpha)\epsilon^{\top}\epsilon+\alpha \epsilon^{\top} J\epsilon\right)  \nonumber
\end{align*}
\begin{align}
&~=~\frac{\alpha^2}{(1+\alpha)(1+\alpha-K\alpha)}~  \left((1+\alpha-K\alpha)K+\alpha (2n-K)^2\right) \label{eqn:cmsq}\\
&~=~ \alpha^2 \frac{4\alpha\cdot n\cdot\left(n-K\right) +\left(1+\alpha\right)K}{(1+\alpha)\left(1+\alpha -K\alpha\right)}, \nonumber
\end{align}
where the second to last equality follows from the two facts 
\[\epsilon^{\top}\epsilon=\sum_{i=1}^K \left(\epsilon^{(i)}\right)^2=K \quad \textrm{and} \quad \epsilon^{\top} J\epsilon=\left(\sum_{i=1}^K \epsilon^{(i)}\right)^2=\left(n-(K-n)\right)^2=(2n-K)^2.\]
Furthermore, in an equiangular set with angle $\alpha$, no two vectors can be orthogonal since $\alpha\neq 0$ (note $0<\alpha<1$ by Definition \ref{def:def1}).  Thus no vector in $X$ can lie in the orthogonal complement $P^{\bot}$ of the $K$-base and $\ell(K, n)=\ip{h}{h} > 0$.
By~\eqref{eqn:cmsq},
\[
\ell(K,n) ~=~ \alpha^2 \frac{4 \alpha(n - K/2 )^2 + K (1 +\alpha - K \alpha)}{(1+\alpha)(1+\alpha-K\alpha)}.
\]
Thus, as a function of $n$, $\ell(K,n)$ is strictly increasing over $1 \leq n \leq   K/2 $ with
\[
\ell(K,K-\frac{(1/\alpha)+1}{2}) ~=~ \alpha\quad \textrm{and} \quad \ell(K, \lfloor K/2\rfloor) ~\leq~  \ell(K, K/2) ~=~ \alpha^2\frac{K}{1+\alpha}~<~1.
\]
So we have proved (\ref{eq:beta}). 
\end{proof}


\begin{definition}\label{def:ec}
For any $x, y\in X\backslash \{p_1, \ldots, p_K\}$, suppose 
 \[x~=~h_1+c_1  \;\text{and}\;y~=~h_2+c_2,\;\text{where}\;\;h_1, h_2\in P \;
\text{and}\;c_1, c_2\in P^{\bot}.\]
We say $x\sim y$ w.r.t.~$\{p_1, \ldots, p_K\}$ if  $h_1$ and $h_2$ are linearly dependent.
The resulting {\em equivalence classes} w.r.t. $\{p_1, \ldots, p_K\}$ are
\[\overline{x}~=~\{y\in X\backslash \{p_1, \ldots, p_K\}|x\sim y\},\;\;\; \forall x\in X\backslash \{p_1, \ldots, p_K\}.\]
\end{definition}

\begin{remark}
Each equivalence class $\ex$  is exactly a ``pillar" of $h$ defined in \citep[Page 501, Section 4]{LS1973}. 
However, with the exception of the fairly trivial \citep[Theorem 4.4]{LS1973}, only the extremal case $\kx=(1/\alpha)+1$ is discussed. Our discussion here is more general for any possible
$\kx$. 
\end{remark}
\begin{lemma}\label{lm:ec}
For any $x, y\in X\backslash \{p_1, \ldots, p_K\}$, suppose $x~=~h_1+c_1$ and $y~=~h_2+c_2$
with
\[ (\langle x, p_1\rangle, ~\ldots, ~\langle x, p_K\rangle)^{\top} ~= ~\alpha\cdot \epsilon_1, \quad \textrm{and} \quad
(\langle y, p_1\rangle, ~\ldots, ~\langle y, p_K\rangle)^{\top} ~=~ \alpha\cdot \epsilon_2,\]
where $h_1, h_2\in P$, $c_1, c_2\in P^{\bot}$ and 
$\epsilon_1, \epsilon_2$ are  $(1, -1)$-vectors in ${\mathbb R}^K$. 
Then the three statements below are equivalent 
\begin{center}
{\em(i)}\;$x\sim y$;\;\;\;\;\;\;\;\;\;\;\;\;\;\;\;\;\;\;\;\;\;\;\;
{\em(ii)}\;$\epsilon_2=\pm \epsilon_1$;\;\;\;\;\;\;\;\;\;\;\;\;\;\;\;\;\;\;\;\;\;\;\;
{\em(iii)}\;$h_1=\pm h_2$.
\end{center}
Hence for each equivalence class $\ex$, there exists an $\epsilon_1$ such that
 \[\ex~=~\{x\in X\backslash \{p_1, \ldots, p_K\}|(\langle x, p_1\rangle, ~\ldots, ~\langle x, p_K\rangle)^{\top} ~= ~\pm \alpha\cdot \epsilon_1\}.\]
 \end{lemma}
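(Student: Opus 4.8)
The plan is to prove the equivalence of (i), (ii), (iii) by establishing a cycle of implications, and then deduce the final description of $\ex$ from (ii). The natural order is (ii) $\Rightarrow$ (iii) $\Rightarrow$ (i) $\Rightarrow$ (ii), since the first two are essentially immediate and the last one carries the only real content.

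First I would handle (ii) $\Rightarrow$ (iii). Both parts of Proposition~\ref{lm:project} express $h$ explicitly and linearly in terms of $\epsilon$: in the extremal case $K=(1/\alpha)+1$ we have $h=\frac1K\sum_i \epsilon^{(i)}p_i$, and in the non-extremal case $h=\sum_i a^{(i)}p_i$ with $a=\alpha((1+\alpha)I-\alpha J)^{-1}\epsilon$. In either case the map $\epsilon\mapsto h$ is linear, so $\epsilon_2=\pm\epsilon_1$ forces $h_2=\pm h_1$. Next, (iii) $\Rightarrow$ (i) is just the definition of $\sim$: if $h_1=\pm h_2$ then $h_1,h_2$ are linearly dependent, hence $x\sim y$.

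The substantive step is (i) $\Rightarrow$ (ii), and I expect this to be the main obstacle. Assuming $h_1,h_2$ are linearly dependent, I want to conclude $\epsilon_2=\pm\epsilon_1$. The key point is that $\epsilon$ is recoverable from $h$: by~\eqref{eq:pp42} we have $\langle h, p_i\rangle = \alpha\,\epsilon^{(i)}$ for each $i$, so $\epsilon^{(i)} = \frac1\alpha\langle h, p_i\rangle$. Thus if $h_2 = \lambda h_1$ for some scalar $\lambda$ (the linear dependence, with $\lambda\neq 0$ since no vector of $X$ lies in $P^\perp$, by the argument in the proof of Proposition~\ref{lm:project}), then $\epsilon_2 = \lambda\,\epsilon_1$. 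But $\epsilon_1,\epsilon_2$ are both $(1,-1)$-vectors, so comparing any single coordinate gives $\lambda = \pm 1$, whence $\epsilon_2 = \pm\epsilon_1$. One subtlety to address: linear dependence of the pair $\{h_1,h_2\}$ a priori allows one of them to be zero; I would note that $h_i\neq 0$ because $\langle h_i, h_i\rangle = \ell(K, n_i) > 0$ (equivalently $\langle h_i,h_i\rangle = \alpha$ in the extremal case), as already established, so genuine dependence means $h_2$ is a nonzero scalar multiple of $h_1$.

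Finally, for the concluding description of the equivalence classes: fix any $x_0\in\ex$ with associated sign vector $\epsilon_1$ (so $(\langle x_0,p_1\rangle,\ldots,\langle x_0,p_K\rangle)^\top = \alpha\epsilon_1$). By the equivalence (i) $\Leftrightarrow$ (ii), a vector $x\in X\backslash\{p_1,\ldots,p_K\}$ lies in $\ex$ if and only if its sign vector is $\pm\epsilon_1$, which is exactly the asserted set equality. I would remark that this also shows each equivalence class is determined by an unordered pair $\{\epsilon_1,-\epsilon_1\}$ of sign patterns, which is the combinatorial bookkeeping underlying the later counting results.
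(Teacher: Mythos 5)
Your proof is correct. The outer structure (a cycle of implications resting on Proposition~\ref{lm:project}) matches the paper, but your treatment of the substantive implication (i)~$\Rightarrow$~(ii) takes a genuinely different and arguably cleaner route. The paper works \emph{forward} from $\epsilon$ to $h$: it writes $h_j$ as the synthesis matrix $\left(p_1 \cdots p_K\right)$ applied to $\alpha\left((1+\alpha)I-\alpha J\right)^{-1}\epsilon_j$ and reduces $h_1=\lambda h_2$ to a kernel statement, which forces a case split — trivial kernel when $K<(1/\alpha)+1$, and kernel spanned by the all-ones vector (combined, implicitly, with the balanced-sign fact of Lemma~\ref{lm:project1973}) when $K=(1/\alpha)+1$. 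You instead work \emph{backward}, recovering $\epsilon$ from $h$ via $\epsilon^{(i)}=\frac{1}{\alpha}\ip{h}{p_i}$, which holds in both cases simply because $c\in P^{\bot}$ and $p_i\in P$; consequently $h_2=\lambda h_1$ immediately gives $\epsilon_2=\lambda\epsilon_1$ and $\lambda=\pm1$, with no case distinction and no discussion of the kernel or of the invertibility of $(1+\alpha)I-\alpha J$. You also handle the small points the paper leaves tacit: that $h_1,h_2\neq 0$ (so linear dependence really means a nonzero scalar relation), justified by $\ip{h}{h}=\ell(K,n)>0$ or $\ip{h}{h}=\alpha$, and that a $(1,-1)$-vector identity pins $\lambda$ to $\pm 1$. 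What the paper's route buys is the explicit synthesis formula it reuses elsewhere; what yours buys is uniformity across $K$ and a shorter argument for the only nontrivial direction. The concluding description of $\ex$ follows as you say, with the minor simplification that one may take the representative to be $x$ itself rather than an arbitrary $x_0\in\ex$.
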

  \begin{proof}
 Assume that $K<(1/\alpha)+1$. 
    It follows from \eqref{eq:h} in Proposition~\ref{lm:project} that for $j=1,2$
  \[
  h_j~ = ~\left( \begin{array}{cccc}p_1 & p_2 & \hdots & p_K \end{array}\right)  \alpha~\left((1+\alpha)I-\alpha J\right)^{-1}~\epsilon_j,
  \]
 where the matrix on the left-hand-side of the product $\left( \begin{array}{cccc}p_1 & p_2 & \hdots & p_K \end{array}\right)$  is the $r \times K$ matrix with column vectors $p_1, \ldots, p_K$. Thus,  there exists a real number $\lambda \neq 0$ such that $h_1 = \lambda h_2$ if and only if
 \[
0~= ~h_1 -\lambda h_2 ~ =~ \left( \begin{array}{cccc}p_1 & p_2 & \hdots & p_K \end{array}\right)  \alpha~\left((1+\alpha)I-\alpha J\right)^{-1}~\left(\epsilon_1- \lambda \eps_2\right).\]
If $K=(1/\alpha)+1$, the argument is similar, but additionally makes use of the fact that in this case
\[
\operatorname{kern}\left( \begin{array}{cccc}p_1 & p_2 & \hdots & p_K \end{array}\right)~=~\lspan\left(\begin{array}{cccc} 1 & 1 & \hdots & 1\end{array}\right)^{\top}.
\] 
 \end{proof}
 \begin{corollary}\label{cry:ec}
 Each equivalence classe $\ex$ w.r.t.~$\{p_1, \ldots, p_K\}$ is switching equivalent to an equiangular set $Y=\{y_1, \ldots, y_s\}$ such that 
there exist $h\in P$ and a $(1, -1)$-vector $\epsilon \in {\mathbb R}^{K}$ such that for every $y_i$ $(i=1, \ldots, s)$, we have 
 \[y_i~=~h+d_i,\;\;\;\;\text{where}\;\;d_i\in P^{\bot} \quad \textrm{and} \quad (\langle y_i, p_1\rangle, ~\ldots, ~\langle y_i, p_K\rangle)^{\top} ~=~ \alpha\cdot \epsilon.\]
 \end{corollary}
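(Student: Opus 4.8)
The plan is to deduce Corollary~\ref{cry:ec} from Lemma~\ref{lm:ec} by a single application of switching. Fix an equivalence class $\ex$ w.r.t.~$\{p_1,\ldots,p_K\}$ and enumerate its elements $\ex=\{x_1,\ldots,x_s\}$. By Lemma~\ref{lm:ec}, there is a fixed $(1,-1)$-vector $\epsilon_1\in\bR^K$ such that for each $i$ the sign vector of $x_i$ against the $K$-base is $\pm\alpha\cdot\epsilon_1$; write $(\langle x_i,p_1\rangle,\ldots,\langle x_i,p_K\rangle)^\top=\delta_i\,\alpha\,\epsilon_1$ with $\delta_i\in\{+1,-1\}$. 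First I would replace each $x_i$ by $y_i:=\delta_i x_i$. Since $y_i=\pm x_i$ spans the same line, $Y=\{y_1,\ldots,y_s\}$ is an equiangular set with the same angle $\alpha$ and $|Y|=|\ex|=s$, and by Lemma~\ref{lm:switch} (applied with the identity permutation and the diagonal $(1,-1)$-matrix $\mathrm{diag}(\delta_1,\ldots,\delta_s)$) we get $G(\ex)\cong G(Y)$, i.e.\ $\ex$ and $Y$ are switching equivalent.

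Next I would check that $Y$ has the claimed uniform structure. For each $i$, multiplying $x_i$ by $\delta_i$ multiplies all inner products $\langle x_i,p_j\rangle$ by $\delta_i$, so $(\langle y_i,p_1\rangle,\ldots,\langle y_i,p_K\rangle)^\top=\delta_i^2\,\alpha\,\epsilon_1=\alpha\,\epsilon_1=:\alpha\,\epsilon$, which is now the \emph{same} vector for all $i$. Decompose $y_i=h_i+d_i$ with $h_i\in P$, $d_i\in P^\bot$. By Proposition~\ref{lm:project}, $h_i$ is determined entirely by the sign vector $\epsilon$: in the case $K<(1/\alpha)+1$ it is $h_i=\big(p_1\ \cdots\ p_K\big)\,\alpha\big((1+\alpha)I-\alpha J\big)^{-1}\epsilon$, and in the case $K=(1/\alpha)+1$ it is $h_i=\tfrac1K(\epsilon^{(1)}p_1+\cdots+\epsilon^{(K)}p_K)$. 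In either case $h_i$ does not depend on $i$, so set $h:=h_1=\cdots=h_s\in P$ and $d_i:=y_i-h\in P^\bot$. Then $y_i=h+d_i$ and $(\langle y_i,p_1\rangle,\ldots,\langle y_i,p_K\rangle)^\top=\alpha\,\epsilon$ for every $i$, which is exactly the assertion of the corollary.

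There is essentially no obstacle here; the statement is a bookkeeping consequence of Lemma~\ref{lm:ec} together with the observation that within a pillar the $P$-component is uniquely pinned down by the common sign vector. The only mild subtlety to flag is that the switching is performed purely by sign flips (no permutation is needed), so that the reindexing $x_i\mapsto y_i$ is literally the identity on indices; this is what lets us pass from ``sign vector is $\pm\alpha\epsilon$ for each element'' to ``sign vector is $\alpha\epsilon$ for each element'' simultaneously. One should also note in passing that $s\geq 1$ (pillars are non-empty by construction) so the enumeration makes sense, and that $h\neq 0$ — indeed $\langle h,h\rangle=\ell(K,n)>0$ by Proposition~\ref{lm:project}, or $\langle h,h\rangle=\alpha>0$ in the extremal case — though this positivity is not strictly needed for the statement as written.
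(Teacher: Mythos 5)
Your proposal is correct and follows essentially the same route as the paper: flip the signs of the elements whose sign vector against the $K$-base is $-\epsilon$ (equivalently, whose $P$-component is $-h$, the two being the same by Lemma~\ref{lm:ec}), observe via Lemma~\ref{lm:switch} that this is a switching equivalence, and conclude that the common sign vector forces a common projection $h$ (you cite Proposition~\ref{lm:project} for this last step, the paper cites Lemma~\ref{lm:ec}; either works). No gaps.
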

 \begin{proof}
 Let $\ex=\{x_1, \ldots, x_s\}$. 
For each $i=1, \ldots, s$, $x_i$ can be writen as 
\[x_i=h_i+c_i, \;\;\;\text{where}\;\;h_i\in P \;
\text{and}\;c_i\in P^{\bot}.\] 
By Lemma~\ref{lm:ec}, 
there exists $h\in P$ such that $h_i=h$, or $-h$ for each $i=1, \ldots, s$.
Let 
\[Y~=~\{x_i| x_i = h + c_i, x_i\in\ex\} \cup \{-x_i| x_i = -h + c_i, x_i\in\ex\}.\]
Note that $Y$ is switching equivalent to $\ex$.
Rename the vectors in $Y$ as $y_1, \ldots, y_s$. Then we can write each of them as
\[y_i=h+d_i, \;\;\;\text{where}\;\;h\in P \;
\text{and}\;d_i\in P^{\bot}.\] 
So for any $i$, $(\langle y_i, p_1\rangle, ~\ldots, ~\langle y_i, p_K\rangle)^{\top} ~=~ (\langle h, p_1\rangle, ~\ldots, ~\langle h, p_K\rangle)^{\top}=\alpha\cdot \epsilon$ for a $(1, -1)$-vector $\epsilon$. 
 \end{proof}
 By the definition of equivalence classes w.r.t.~$\{p_1, \ldots, p_K\}$ (Definition \ref{def:ec}), we have $X=\bigcup_{x\in X}\ex$. 
  In order to derive an upper bound of $|X|$, we naturally have two questions below. 
 \begin{itemize}
 \item[]{\bf (Question 1).}~
 How many (necessarily finitely many) equivalence classes are there in $X$?
 \item[]{\bf (Question 2).}~For each equivalence class $\ex\subset X$, what is an upper bound for $|\ex|$? 
 \end{itemize}
 We answer the two questions in the following subsections according to two different cases $\kx=(1/\alpha)+1$ (Theorem \ref{general}) and 
 $\kx<(1/\alpha)+1$ (Theorems~\ref{thm:ec} and~\ref{th:s2}).

\subsubsection{$\kx=(1/\alpha)+1$}\label{reviewseidel}


The extremal case $\kx=(1/\alpha)+1$ was studied in  detail  in  \citep{LS1973}.   In this case, $\{p_1, p_2, \hdots, p_K\}$ forms an equiangular tight frame for its span and can be thought of as vectors pointing to vertices of a regular simplex centered at the origin \citep{FJKM17}. (Recall
$\sum_{j=1}^K p_j =0$ by Proposition \ref{lm:easy}.) 
For this case,  we answer (Question 1) and (Question 2) in Theorem \ref{general}. The proof is given in Appendix~\ref{App:AA}. 
\begin{theorem}\label{general} 
Let $X\subset {\mathbb R}^r$ be an equiangular set with angle $\alpha$, 
where $1/\alpha$ is an odd number greater or equal to $3$. 
 If $K=\kx=(1/\alpha)+1$, 
 then there are $\frac{1}{2}\binom{K}{K/2}$ equivalence classes $\ex$ w.r.t.~any fixed $K$-base, and for each
 $\ex$, 
 \[|\ex|~\leq~ r -  K + 1 +  \lfloor 2\alpha\frac{r-K+1}{1-\alpha} \rfloor.\]
  Hence, 
\[|X|~\leq~ K + \frac{1}{2}\binom{K}{K/2}\left(r -  K + 1 +  \lfloor 2\alpha\frac{r-K+1}{1-\alpha} \rfloor \right).\]
\end{theorem}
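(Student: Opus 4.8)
The plan is to prove the two claims---the count of equivalence classes and the size bound for each---separately, since the count is purely combinatorial and the size bound is geometric.

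\textbf{Counting the equivalence classes.}
Since $K = (1/\alpha)+1$, Proposition~\ref{lm:easy} tells us that $p_1,\dots,p_K$ satisfy $\sum_{j=1}^K p_j = 0$, and Proposition~\ref{lm:project}(1) gives, for any $x = h + c \in X \setminus \{p_1,\dots,p_K\}$ with sign vector $\epsilon \in \{\pm 1\}^K$, that $h = \frac1K\sum_j \epsilon^{(j)} p_j$. By Lemma~\ref{lm:ec}, $\bar x$ is determined precisely by the pair $\{\epsilon, -\epsilon\}$. So the number of equivalence classes is at most the number of such unordered sign-vector pairs, namely $\frac12 \cdot 2^K$ in principle, but the linear relation $\sum_j p_j = 0$ collapses this: since $h$ is unchanged when we add a multiple of $(1,\dots,1)^\top$ to $\epsilon/K$ --- actually the cleaner statement is that $\frac1K\sum_j \epsilon^{(j)} p_j = \frac1K \sum_j \epsilon'^{(j)} p_j$ iff $\epsilon - \epsilon'$ is a multiple of $(1,\dots,1)^\top$, i.e. iff $\epsilon = \pm\epsilon'$ (the only multiples of the all-ones vector that are $(1,-1)$-vectors are $\pm(1,\dots,1)$). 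I would instead use $\langle h,h\rangle = \alpha$ from part (1): by equation~\eqref{eqn:cmsq} specialized to $K = 1/\alpha+1$, $\langle h, h\rangle = \alpha$ forces $(2n-K)^2$ to take a fixed value, hence $n = K/2$ (using that $K = 1/\alpha + 1$ is even, since $1/\alpha$ is odd). So every admissible $\epsilon$ has exactly $K/2$ positive entries, giving $\binom{K}{K/2}$ choices of $\epsilon$ and $\frac12\binom{K}{K/2}$ choices of $\{\epsilon,-\epsilon\}$. One must also check each such $\{\epsilon,-\epsilon\}$ is actually realized --- or, more carefully, we only need the upper bound on the number of classes, so I would just argue that each class maps injectively to some $\{\epsilon,-\epsilon\}$ with $n = K/2$; but the theorem asserts equality, so realizability (which holds because the $p_j$ themselves, or rather the structure forces it --- actually this needs the fact that $X$ must contain vectors filling all classes, which may follow from the definition of $K = K(X)$ being the base size, or should be argued) is the delicate point. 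This is the main obstacle: establishing that all $\frac12\binom{K}{K/2}$ classes are nonempty, which I expect follows from \citep[Section 4]{LS1973} and should be cited or reproduced.

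\textbf{Bounding each equivalence class.}
Fix an equivalence class $\bar x$. By Corollary~\ref{cry:ec}, after switching we may assume every $y \in \bar x$ has the form $y = h + d$ with $h \in P$ fixed and $d \in P^\perp$, and $\langle h,h\rangle = \alpha$. For two such vectors $y = h+d$, $y' = h+d'$ we compute $\langle y, y'\rangle = \langle h,h\rangle + \langle d,d'\rangle = \alpha + \langle d,d'\rangle$, and since this must equal $\pm\alpha$, we get $\langle d,d'\rangle \in \{0, -2\alpha\}$; moreover $\langle d,d\rangle = 1 - \alpha$ for each $y$. Rescaling $d \mapsto d/\sqrt{1-\alpha} =: \tilde d$, the vectors $\tilde d$ are unit vectors in the $(r-K)$-dimensional space $P^\perp$ with pairwise inner products in $\{0, -2\alpha/(1-\alpha)\}$, i.e. a spherical two-distance set (or a set with a single negative inner product plus orthogonality). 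I would then bound $|\bar x|$ by combining two ingredients: a linear-algebra rank bound and a combinatorial/eigenvalue bound. Specifically, build the Gram matrix $G$ of the $\tilde d$'s; it has $1$ on the diagonal and entries $0$ or $-\gamma$ off-diagonal where $\gamma = 2\alpha/(1-\alpha) > 0$. Writing $G = I - \gamma A$ where $A$ is the adjacency matrix of the "negative-inner-product" graph, positive semidefiniteness of $G$ and $\rk G \le r - K + 1$ (the $h$-direction is already used up, hmm --- actually $\tilde d \in P^\perp$ so $\rk G \le \dim P^\perp = r - K$; but the bound in the theorem is $r-K+1$, so more care is needed --- perhaps one should not pre-subtract and instead bound $\rk$ of the Gram matrix of the $y$'s directly, which lies in $\lspan(P \cup P^\perp) $ but the $y$'s all share the component $h$, giving rank $\le (r-K) + 1$). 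The smallest eigenvalue of $A$ is at least $-(1/\gamma)$, i.e. $\lambda_{\min}(A) \ge -(1-\alpha)/(2\alpha)$; an "absolute bound"-style argument on graphs with bounded smallest eigenvalue and bounded rank then yields $|\bar x| \le (r-K+1) + \lfloor 2\alpha (r-K+1)/(1-\alpha)\rfloor$. Concretely: if $G = I - \gamma A \succeq 0$ has rank $\rho \le r-K+1$, then $\operatorname{tr}(G) = |\bar x|$ and the nonzero eigenvalues of $G$ average $|\bar x|/\rho \le$ something, OR one uses that $A$ has at most $\rho$ eigenvalues exceeding... I would set it up as: $|\bar x| = \operatorname{tr} G$ and $0 \preceq G$, while the number of eigenvalues of $G$ equal to... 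Actually the clean route is the one used for two-distance sets: $-\gamma A \preceq G$ has at most $\rho$ positive eigenvalues so $A$ has at most $\rho$ negative eigenvalues, and combined with $\operatorname{tr} A = 0$, $\operatorname{tr} A^2 = $ (twice the edge count), plus $\lambda_{\min}(A) \ge -1/\gamma$, interlacing gives the stated bound after arithmetic. The final summation $|X| \le K + \frac12\binom{K}{K/2}(\cdots)$ is then immediate from $X = \{p_1,\dots,p_K\} \cup \bigcup \bar x$ and the count of classes.

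\textbf{Where the difficulty lies.}
The genuine obstacle is the second part: turning "$\rk \le r-K+1$ and $\lambda_{\min}(A) \ge -(1-\alpha)/(2\alpha)$" into the precise floor expression $r-K+1 + \lfloor 2\alpha(r-K+1)/(1-\alpha)\rfloor$. I expect this is a short extremal-eigenvalue computation: if $G$ has rank $\rho$, write its positive eigenvalues; since $G = I - \gamma A$ and $A = \gamma^{-1}(I - G)$, the constraint $\lambda_{\min}(A)\ge -\gamma^{-1}$ is equivalent to $\lambda_{\max}(G) \le 2$, wait that gives $\lambda_{\max}(G)\le 1 + 1 = 2$ only if we also know $\lambda_{\max}(A) \le \gamma^{-1}$, which we do not. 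The right statement: $G \succeq 0$ means $\lambda_{\min}(G) \ge 0$ i.e. $\lambda_{\max}(A) \le \gamma^{-1}$; this bounds $A$ from \emph{above}. Then $\operatorname{tr}(G) = n' = $ number of $\tilde d$'s, $\operatorname{tr}(A) = 0$ so $\operatorname{tr}(G) = n'$ consistent, and $\rk(G) \le r-K+1$ means at least $n' - (r-K+1)$ eigenvalues of $G$ vanish, i.e. $A$ has an eigenvalue $\gamma^{-1}$ with multiplicity $\ge n' - (r-K+1)$. Since $\operatorname{tr}(A) = 0$ and the remaining $\le r-K+1$ eigenvalues of $A$ are each $\le \gamma^{-1}$ but also bounded below (by $\lambda_{\min}$ of an adjacency matrix, $\ge -$ max degree, or better $\ge -\sqrt{\text{something}}$; here we just need $\ge$ the trivial bound), balancing the trace gives $(n' - (r-K+1))\gamma^{-1} \le (r-K+1)\cdot(\text{upper bound on }|\lambda|)$. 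Plugging the numbers and solving for $n'$ produces $n' \le (r-K+1)(1 + \gamma) = (r-K+1)\frac{1+\alpha}{1-\alpha}$, and since $n' = |\bar x| - (\text{nothing, } n' = |\bar x|)$... hmm, this gives $(r-K+1)(1+\gamma)$ not quite the floor expression, but $(r-K+1) + (r-K+1)\gamma = (r-K+1) + 2\alpha(r-K+1)/(1-\alpha)$, and integrality of $|\bar x|$ lets us replace the second term by its floor --- matching the theorem exactly. I would present this balancing-of-traces argument carefully (it is essentially the "relative bound" / absolute bound for equiangular-type sets), being attentive to the $+1$ in the rank (coming from the shared component $h$) and to why $\lambda_{\min}(A)$ contributes the clean factor; the rest is bookkeeping.
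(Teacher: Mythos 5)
The main gap is in your per-class size bound. After the (correct) reduction to unit vectors in $P^{\bot}$ with pairwise inner products $0$ or $-\tau$, $\tau=2\alpha/(1-\alpha)$, you try to extract $m\leq \rho+\lfloor \tau\rho\rfloor$ (with $\rho=r-K+1$, $m=|\ex|$) by balancing the trace of the negative-graph adjacency matrix $A$: positive semidefiniteness of $G=I-\tau A$ gives $\lambda_{\max}(A)\leq 1/\tau$, and the rank bound gives eigenvalue $1/\tau$ with multiplicity at least $m-\rho$, but your balancing inequality $(m-\rho)/\tau\leq \rho\,\abs{\lambda_{\min}(A)}$ only yields $m\leq\rho(1+\tau)$ if $\lambda_{\min}(A)\geq -1$, i.e.\ if the negative graph is a disjoint union of cliques --- which you have not established, and the ``trivial bound'' on $\lambda_{\min}(A)$ that you invoke is far too weak (e.g.\ $K_{a,b}$ has $\lambda_{\min}=-\sqrt{ab}$). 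The paper sidesteps this entirely by citing \cite[Theorem 4.1]{LS1973} in Lemma~\ref{lm:bound1973}. A self-contained repair of your route uses Perron--Frobenius instead of trace balancing: since $\lambda_{\max}(A)=1/\tau$ has multiplicity at least $m-\rho$, at least $m-\rho$ connected components of the negative graph attain this maximal eigenvalue, and a connected graph with largest eigenvalue $1/\tau$ has at least $1/\tau+1$ vertices; hence $m\geq (m-\rho)(1/\tau+1)$, which rearranges to $m\leq\rho(1+\tau)$, and integrality of $m-\rho$ gives the floor.

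Two secondary issues. First, the dimension: since $K=(1/\alpha)+1$, the Gramian $(1+\alpha)I-\alpha J$ of the base has rank $K-1$ (the zero eigenvalue is simple), so $\dim P=K-1$ and $\dim P^{\bot}=r-K+1$ directly; your vacillation between $r-K$ and a ``$+1$ from the shared $h$'' patch is resolved this way, and the two-distance set genuinely lives in an $(r-K+1)$-dimensional space. Second, your derivation of $n=K/2$ from \eqref{eqn:cmsq} is not legitimate: that formula is derived under $K<(1/\alpha)+1$ and its denominator $1+\alpha-K\alpha$ vanishes at $K=(1/\alpha)+1$. The correct one-line argument (the paper's Lemma~\ref{lm:project1973}) is $\sum_i\langle x,p_i\rangle=\langle x,\sum_i p_i\rangle=0$, which forces exactly $K/2$ positive signs. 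Your worry about realizability of all $\frac{1}{2}\binom{K}{K/2}$ classes is moot: only the upper bound on the number of (possibly empty) classes enters the final count, which is exactly how the paper uses the partition.
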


For $\alpha=1/3$ and $1/5$, the upper bound in Theorem~\ref{general} can be reduced significantly by applying spectral graph theory. For instance, if $\alpha=1/3$, $|X| \leq 2(r-1)$ for $r \geq 15$, and any set $X$ which attains this upper bound must have $\kx=(1/\alpha)+1=4$ \citep[Theorem 4.5]{LS1973}. 
What is deeply hidden in its proof is that the only connected simple graph with maximum eigenvalue $1$ is the complete graph on two vertices. Theorem~\ref{ls1973} below is proved by the fact that the connected simple graphs  with maximum eigenvalue $2$ only have $5$ patterns \cite[Theorem 5.1]{LS1973}.   

\begin{theorem}\label{ls1973}\cite[Theorem 5.7]{LS1973}
Let $X\subset {\mathbb R}^r$ be an equiangular set with angle $1/5$. 
 If $K_{1/5}(X)=6$, then 

\[|X|~\leq~
\begin{cases}
 276, &  23\leq r\leq 185 \\ 
 r+1+\lfloor \left(r-5\right)/2\rfloor, & r\geq 185.
\end{cases}
\]
\end{theorem}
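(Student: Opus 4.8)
The plan is to exploit that $\km(X)=6$ is exactly the extremal base size $(1/\alpha)+1$ for $\alpha=1/5$, so Theorem~\ref{general} already supplies the skeleton: after fixing a $6$-base $\{p_1,\dots,p_6\}$, normalised so that $\sum_{j}p_j=0$, the vectors of $X$ outside the base split into at most $\tfrac{1}{2}\binom{6}{3}=10$ pillars, and by Proposition~\ref{lm:project}(1) together with Corollary~\ref{cry:ec} each pillar is switching equivalent to a set $\{h+d_1,\dots,h+d_n\}$ for a single $h\in P:=\lspan\{p_1,\dots,p_6\}$ with $\langle h,h\rangle=1/5$ and with $d_1,\dots,d_n$ lying in $P^{\bot}$, where $\dim P^{\bot}=r-5$. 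Hence $|X|\le 6+\sum_{i=1}^{10}|\overline{x}_i|$, and the real task is to bound the ten pillar sizes not one at a time but jointly.

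First I would treat a single pillar. From $\langle h,h\rangle=1/5$ and $\langle h+d_i,h+d_j\rangle=\pm1/5$ one reads off $\langle d_i,d_i\rangle=4/5$ and $\langle d_i,d_j\rangle\in\{0,-2/5\}$; rescaling to unit vectors $e_i:=(\sqrt{5}/2)\,d_i$ in $P^{\bot}$ gives $\langle e_i,e_j\rangle\in\{0,-1/2\}$. Thus the Gram matrix of the $e_i$ is $I-\tfrac{1}{2} A$, where $A$ is the adjacency matrix of the graph $\Gamma$ on the pillar recording which pairs have inner product $-1/2$, and positive semidefiniteness forces the largest eigenvalue of $\Gamma$ to be at most $2$. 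Here the spectral graph theory enters: by the classification behind \citep[Theorem 5.1]{LS1973}, a connected graph with largest eigenvalue $<2$ is an ordinary Dynkin diagram $A_n,D_n,E_6,E_7,E_8$, and a connected graph with largest eigenvalue exactly $2$ is one of the five extended Dynkin diagrams $\tilde A_n,\tilde D_n,\tilde E_6,\tilde E_7,\tilde E_8$, for which $2$ is a simple eigenvalue. Consequently $\rk(I-\tfrac{1}{2} A)=n-c$ with $c$ the number of components of $\Gamma$ of extended type, and since this rank is at most $\dim P^{\bot}=r-5$, a pillar can only be large if $\Gamma$ carries long path-like or cyclic components.

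The heart of the proof is the coupling between the ten pillars. Two distinct pillars $\overline{x}_i,\overline{x}_j$ have representatives $h_i=\tfrac{1}{6}\sum_a\epsilon_i^{(a)}p_a$ and $h_j$ given by the analogous formula, with $\langle h_i,h_j\rangle=\tfrac{1}{30}\langle\epsilon_i,\epsilon_j\rangle$; since each sign pattern is balanced ($\sum_a\epsilon^{(a)}=0$) one has $\langle\epsilon_i,\epsilon_j\rangle\in\{-2,2\}$ for distinct pillars, so $\langle h_i,h_j\rangle=\pm1/15$. After the same rescaling, the projections coming from two different pillars then have pairwise inner products in $\{1/6,-1/3\}$ or $\{-1/6,1/3\}$ according to the sign of $\langle h_i,h_j\rangle$. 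In other words, the whole family of rescaled projections is a set of unit vectors in the $(r-5)$-dimensional space $P^{\bot}$ whose pairwise inner products lie in $\{0,\pm1/6,\pm1/3,-1/2\}$, with the cross-signs dictated by the combinatorics of the ten balanced $\pm1$-patterns on six symbols; I would push the positive semidefiniteness of this Gram matrix (rank at most $r-5$) through a spectral, linear-programming, or semidefinite estimate in the spirit of Theorem~\ref{sdp}. The two regimes arise from a dichotomy: if no $\Gamma$ uses a long cyclic or path component then the pillars are all of absolutely bounded size, with $276=\binom{24}{2}$ as the ceiling (realised already in $\bR^{23}$); whereas a pillar containing a long cycle or path spans many dimensions of $P^{\bot}$ and, through the cross inner products, suppresses the other pillars so strongly that the count collapses to the single-pillar estimate $r-5+\lfloor(r-5)/2\rfloor$ of Theorem~\ref{general} plus the six base vectors, i.e.\ $r+1+\lfloor(r-5)/2\rfloor$. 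Taking the better of the two bounds yields $\max\{276,\ r+1+\lfloor(r-5)/2\rfloor\}$, and since $185+1+\lfloor180/2\rfloor=276$ the switchover is exactly at $r=185$.

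The main obstacle is precisely this joint analysis across the ten pillars. Within one pillar the Dynkin versus extended-Dynkin split is clean, but the rescaled projections drawn from different pillars form a genuine set with six distinct inner-product values rather than a two-distance set, so one must either carry out a finite but delicate case analysis of which combinations of component types and sign patterns can coexist inside $P^{\bot}$ --- the bookkeeping that makes up the bulk of the original argument in \citep{LS1973} --- or replace that analysis by a semidefinite relaxation. A secondary, purely arithmetic point is to track the floor functions carefully enough to land the threshold at $r=185$ rather than a neighbouring value, and to verify that for every $r$ with $23\le r\le185$ no admissible configuration can exceed $276$.
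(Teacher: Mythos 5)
You have correctly reconstructed the skeleton of the argument behind this statement, which the paper itself does not prove: it is quoted verbatim from \citep[Theorem 5.7]{LS1973}, and the paper only remarks that its proof rests on the classification of connected graphs with largest eigenvalue $2$ \citep[Theorem 5.1]{LS1973}. Your setup matches that original route: the ten balanced sign patterns giving at most $\frac{1}{2}\binom{6}{3}$ pillars, the within-pillar renormalization to unit vectors in $P^{\bot}$ with inner products in $\{0,-1/2\}$, the resulting constraint $\lambda_{\max}(A)\leq 2$ and the (extended) Dynkin classification, and the cross-pillar inner products $\langle h_i,h_j\rangle=\pm 1/15$ leading to the value sets $\{1/6,-1/3\}$ or $\{-1/6,1/3\}$ are all correct computations.

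The genuine gap is exactly the step you flag as the ``heart'' but then do not carry out: the joint analysis of the ten pillars. The claim that, absent long path/cycle components, the total is capped at $276$, and that a large (extended-type) pillar ``suppresses the other pillars so strongly'' that the count collapses to $r+1+\lfloor(r-5)/2\rfloor$, is asserted rather than derived; invoking $276=\binom{24}{2}$ as ``realised already in $\bR^{23}$'' is a lower-bound existence fact and gives no upper bound. In \citep{LS1973} this is precisely where the work lies: the five graphs of their Theorem 5.1 are fed into a sequence of lemmas (their Theorems 5.2--5.6) that constrain how vectors from different pillars can attach to the $6$-clique and to each other, and only that case analysis produces the constant $276$ on $23\leq r\leq 185$ and the switch to the single-pillar bound at $r=185$ (your arithmetic $185+1+\lfloor 180/2\rfloor=276$ only locates the crossover of the two formulas, it does not show either formula is valid in its range). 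Your alternative suggestion --- running an SDP in the spirit of Theorem~\ref{sdp} on the six-valued inner-product set $\{0,\pm 1/6,\pm 1/3,-1/2\}$ --- is also not executed, and Theorem~\ref{sdp} as stated applies to two-distance sets, so it would need a genuine extension (a multi-distance SDP with the sign pattern constraints) before it could replace the combinatorial bookkeeping. As written, the proposal is a correct plan with the decisive multi-pillar argument missing, so it does not yet establish the stated bounds.
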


\subsubsection{$\kx < (1/\alpha)+1$}\label{littlek}
Let $X\subset {\mathbb R}^r$ be an equiangular set with angle $\alpha$. 
In this subsection, we answer (Question 1) and (Question 2) for the case $K=\kx < (1/\alpha)+1$. 
When $\alpha$  and a $K$-base $\{p_1, \ldots, p_K\}$ are fixed, we notice that for any  vector $x\in X\backslash \{p_1, \ldots, p_K\}$,  the norm of its projection $h$ onto the subspace $P$ (spanned by $p_1, \ldots, p_K$), that is the value of $\ell(K, n)$ in (\ref{eq:l}), only
depends on $n$, namely the number of positive inner products among $\langle x, p_1\rangle, ~\ldots, ~\langle x, p_K\rangle$. We further note that the function $\ell(K, n)$  is symmetric w.r.t.~$n$ and $K-n$. That means when we have $n$ or $K-n$ positive signs among $\langle x, p_1\rangle, ~\ldots, ~\langle x, p_K\rangle$, the norm 
of $h$ will be the same.  
This should be expected, since if $X$ is equiangular, $(X\backslash \{x\}) \cup \{-x\}$ is also equiangular, and these two equiangular sets are 
switching equivalent. Inspired by these observations,  
we define subsets $X(K, n)$ of $X\backslash \{p_1, \ldots, p_K\}$  for $n=0, \ldots, \lfloor K/2\rfloor$, 
\begin{equation}\label{eq:x}
{
X(K, n)~\triangleq~\{x\in  X\backslash \{p_1, \ldots, p_K\}|\exists~\text{exactly}~n~\text{or}~K-n~\text{positive signs among}~\langle x, p_1\rangle, \ldots, \langle x, p_K\rangle\}.
}
\end{equation}
Then by the formula $(\ref{eq:l})$, for any two distinct vectors $x, y\in X(K, n)$, if we project them onto the subspace $P$, the norms of their projections are the same.  We next  show by
 Proposition \ref{pp:simplify} that  $X(K, 0)=\emptyset$.  
 \begin{proposition}\label{pp:simplify}
 Let $X\subset {\mathbb R}^r$ be an equiangular set with angle $\alpha$. If $K=\kx< (1/\alpha)+1$, 
then $X(K, 0)=\emptyset$.
\end{proposition}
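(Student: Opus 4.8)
The plan is to argue by contradiction: suppose there exists $x \in X(K,0)$, meaning all $K$ inner products $\langle x, p_1 \rangle, \ldots, \langle x, p_K \rangle$ have the same sign, so $\epsilon = (\epsilon^{(1)}, \ldots, \epsilon^{(K)})^\top$ is either the all-ones or the all-minus-ones vector. After possibly replacing $x$ by $-x$ (which is switching-equivalent and preserves everything relevant), we may assume $\epsilon = (1, \ldots, 1)^\top$, i.e.\ $\langle x, p_i \rangle = \alpha$ for every $i = 1, \ldots, K$. The goal is to show that then $\{p_1, \ldots, p_K, x\}$ (or a switching-equivalent set obtained by flipping $x$) would itself be a negative clique of size $K+1$, contradicting the maximality of $K = \kx$.

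\smallskip

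\textbf{Step 1: compute the projection $h$ of $x$ onto $P$.} By Proposition~\ref{lm:project}(2) with $\epsilon$ the all-ones vector, $h = a^{(1)} p_1 + \cdots + a^{(K)} p_K$ where $(a^{(1)}, \ldots, a^{(K)})^\top = \alpha\left((1+\alpha)I - \alpha J\right)^{-1}\epsilon$. Since $\epsilon$ is an eigenvector of $(1+\alpha)I - \alpha J$ with eigenvalue $1 + \alpha - K\alpha$, each $a^{(i)}$ equals $\alpha/(1+\alpha-K\alpha)$; write $a$ for this common value, so $h = a\sum_{j=1}^K p_j$. Note $1 + \alpha - K\alpha > 0$ because $K < (1/\alpha)+1$, so $a > 0$. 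Also, by~\eqref{eq:l} with $n = K$ we have $\langle h, h\rangle = \ell(K, K) = \alpha^2 K/(1+\alpha-K\alpha)$.

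\smallskip

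\textbf{Step 2: produce a vector orthogonal to $x$ and to all but one $p_i$, or directly build the larger clique.} The cleanest route is to consider the vector $x' = -x$ and examine $\{p_1, \ldots, p_{K}, x'\}$: here $\langle x', p_i \rangle = -\alpha$ for all $i$, which is exactly the pattern of a negative clique. It remains only to check that $\{p_1, \ldots, p_K, x'\}$ is equiangular with all the required inner products equal to $-\alpha$ — but $\langle p_i, p_j \rangle = -\alpha$ by~\eqref{eq:skbase}, $\langle x', p_i \rangle = -\alpha$ by the above, and $\langle x', x' \rangle = 1$ since $x$ is a unit vector. Hence $G(p_1, \ldots, p_K, x') = (1+\alpha)I_{K+1} - \alpha J_{K+1}$, so by Definition~\ref{def:kbase} we would have $\kx \geq K+1$, contradicting $\kx = K$.

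\smallskip

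\textbf{Where the subtlety lies.} The only thing that needs genuine care is confirming that the hypothesis "$\langle x, p_i\rangle$ all have the same sign" together with the normalization $x \mapsto -x$ really does force $\langle x', p_i \rangle = -\alpha$ for \emph{every} $i$ simultaneously, rather than just matching up to switching on the $p_i$'s individually — but this is immediate once we observe that switching individual $p_i$'s would destroy the relations $\langle p_i, p_j\rangle = -\alpha$, so the sign pattern of $(\langle x, p_1\rangle, \ldots, \langle x, p_K\rangle)$ is an honest invariant of the pair $(\{p_i\}, \pm x)$. Thus the argument is short: $X(K,0) \neq \emptyset$ would directly exhibit a negative clique of size $K+1$, contradicting the definition of the base size. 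I expect no real obstacle here; the content of the proposition is essentially that a vector having constant-sign inner products with a maximal negative clique can be absorbed into that clique after a sign flip.
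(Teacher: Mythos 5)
Your proposal is correct and follows essentially the same route as the paper's proof: a vector with constant-sign inner products against the $K$-base yields, after a sign flip (i.e., up to switching equivalence, as in Lemma~\ref{lm:switch}), a Gramian $G(x,p_1,\ldots,p_K)\cong(1+\alpha)I-\alpha J$ of size $K+1$, contradicting the maximality in Definition~\ref{def:kbase}. Your Step 1 computing the projection $h$ is not needed for this contradiction and can be omitted.
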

\begin{proof}
Assume that $X(K, 0)\neq \emptyset$.  For any $x\in X(K, 0)$, by the definition of $X(K, 0)$ in (\ref{eq:x}), 
$\langle x, p_1\rangle, \ldots, \langle x, p_K\rangle$ are either all $-\alpha$ or all $\alpha$. That means we have 
\[
G(x, p_1, \ldots, p_K)~=~(1+\alpha) I -\alpha J \;\; \text{or}\;\; G(-x, p_1, \ldots, p_K)~=~(1+\alpha) I -\alpha J.
\]
By Lemma \ref{lm:switch}, $G(x, p_1, \ldots, p_K)\cong(1+\alpha) I -\alpha J$.
So by the definition of 
$\kk(X)$ in $(\ref{eq:k})$, 
$K+1\leq \kk(X)=K$. This is a contradiction. 
\end{proof}
Since  $X(K, 0)=\emptyset$, we write $X$ as a disjoint union 
\begin{equation}\label{eq:decompose}
X~=~\{p_1, \ldots, p_K\} \bigcup (X\backslash \{p_1, \ldots, p_K\})
~=~\{p_1, \ldots, p_K\} \bigcup\bigcup\limits_{n=1}^{\lfloor K/2\rfloor
} X(K, n).
\end{equation}
Below, in Theorem \ref{thm:ec}, 
we give the number of equivalence classes $\ex$ in  $X(K, n)$ for each $n$, and 
 in Theorem \ref{th:s2}, for each equivalence class $\ex$ in $X(K, n)$, we give an upper bound on $|\ex|$ in terms of $r, \alpha, K, \ell(K, n)$.  
  The proof of Theorem \ref{th:s2} is given in Appendix~\ref{App: AB}. 

 \begin{theorem}\label{thm:ec}
 Let $X\subset {\mathbb R}^r$ be an equiangular set with angle $\alpha$. 
 Suppose $K=\kx<(1/\alpha)+1$. 
 Fix a $K$-base $\{p_1, \ldots, p_K\}$ and define subsets $X(K, n)$ for $n=1, \ldots, \lfloor K/2\rfloor$ as in (\ref{eq:x}).   
 For each $n$, if $2n <K$, then
 the number of equivalence classes $\ex$ in $X(K, n)$ w.r.t $\{p_1, \ldots, p_K\}$ is $\binom{K}{n}$, and if $2n=K$, 
 then  the number of equivalence classes $\ex$ is $\frac{1}{2}\binom{K}{n}$.
 \end{theorem}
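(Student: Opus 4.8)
The plan is to count, not the vectors in $X(K,n)$ themselves, but the possible sign patterns $\epsilon\in\{1,-1\}^K$ that can arise as $(\langle x,p_1\rangle,\dots,\langle x,p_K\rangle)^\top = \alpha\cdot\epsilon$ for $x\in X(K,n)$, and then invoke Lemma~\ref{lm:ec} to see that these patterns, taken up to the identification $\epsilon\sim-\epsilon$, are exactly in bijection with the equivalence classes $\ex$ contained in $X(K,n)$. Concretely, recall from the definition~\eqref{eq:x} that $x\in X(K,n)$ precisely when the sign vector $\epsilon$ has exactly $n$ or exactly $K-n$ entries equal to $+1$. By Lemma~\ref{lm:ec}, $x\sim y$ if and only if their sign vectors satisfy $\epsilon_2=\pm\epsilon_1$, and each equivalence class $\ex\subseteq X\backslash\{p_1,\dots,p_K\}$ corresponds to one such pair $\{\epsilon,-\epsilon\}$. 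So the number of equivalence classes inside $X(K,n)$ is at most the number of unordered pairs $\{\epsilon,-\epsilon\}$ with $\epsilon$ having $n$ or $K-n$ positive entries.

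The combinatorial core is then a short counting argument. The set of $\epsilon\in\{1,-1\}^K$ with exactly $n$ positive entries has size $\binom{K}{n}$, and negation sends this set bijectively onto the set with exactly $K-n$ positive entries. If $2n<K$ these two sets are disjoint, so together they contain $2\binom{K}{n}$ sign vectors, which the involution $\epsilon\mapsto-\epsilon$ (acting without fixed points, since $K\ge 1$) groups into $\binom{K}{n}$ pairs. If $2n=K$, the set of $\epsilon$ with $n$ positive entries is already closed under negation and has size $\binom{K}{n}$, giving $\tfrac12\binom{K}{n}$ pairs. This yields the two claimed counts as \emph{upper} bounds on the number of equivalence classes in $X(K,n)$.

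What remains—and this is the step I expect to be the real content rather than the bookkeeping—is to show that every such pair $\{\epsilon,-\epsilon\}$ is actually \emph{realized}, i.e.\ that for each admissible sign pattern there is at least one vector $x\in X$ with that pattern, so that the count is exact and not merely an upper bound. For $n$ with $1\le n\le\lfloor K/2\rfloor$ this should follow from the maximality built into the definition of $K=\kx$ together with Proposition~\ref{lm:easy2} and the switching-equivalence machinery: roughly, if some admissible pattern failed to appear, one should be able to switch signs of the base vectors $p_1,\dots,p_K$ (a switching-equivalence operation, hence harmless by Lemma~\ref{lm:switch} and the reduction~\eqref{eq:skbase}) to rearrange which patterns are "missing," and a counting/pigeonhole comparison against $|X|$ would force a contradiction with $\kx$ being maximal—or else the argument is pushed to Appendix~\ref{App: AB} in tandem with Theorem~\ref{th:s2}. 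I would structure the write-up so that the realizability direction is isolated as the one genuinely delicate point, citing Proposition~\ref{pp:simplify} for the $n=0$ boundary case (which is why the index starts at $n=1$) and the $K$-base maximality for $1\le n\le\lfloor K/2\rfloor$, with the symmetry $n\leftrightarrow K-n$ of $\ell(K,n)$ in~\eqref{eq:l} explaining why patterns with $n$ and with $K-n$ positive signs must be lumped together. The hard part is thus not the identity $\#\{\text{classes}\}=\binom{K}{n}$ or $\tfrac12\binom{K}{n}$ as an upper bound, but certifying that the bound is attained.
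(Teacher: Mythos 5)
Your counting argument is exactly the paper's proof: by Lemma~\ref{lm:ec} the equivalence classes inside $X(K,n)$ correspond to the pairs $\{\epsilon,-\epsilon\}$ of sign patterns with $n$ or $K-n$ positive entries, and the involution $\epsilon\mapsto-\epsilon$ groups the $2\binom{K}{n}$ patterns into $\binom{K}{n}$ pairs when $2n<K$, while for $2n=K$ the $\binom{K}{n}$ patterns are already closed under negation and give $\tfrac12\binom{K}{n}$ pairs (the paper phrases this last point as: if $x\in\ex\subset X(2n,n)$ then $-x\in\ex$ as well). Up to this point your write-up and the paper's proof coincide.

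The step you single out as ``the real content''---showing that every pair $\{\epsilon,-\epsilon\}$ is actually realized by some vector of $X$---is a misreading of what the theorem asserts, and the realizability claim is in fact false in general: nothing about the maximality of $\kx$ forces $X$ to contain a vector for each admissible sign pattern (take $X$ to be a $K$-base together with a single further vector). The paper itself relies on this: in the proof of Lemma~\ref{lm:rb4} (Cases 1 and 3) it explicitly treats the situation in which only one equivalence class in $X(4,1)$ is non-empty. The counts $\binom{K}{n}$ and $\tfrac12\binom{K}{n}$ are meant as the number of \emph{possible} classes determined by the sign patterns, i.e.\ an upper bound on the number of non-empty classes, and that is how Theorem~\ref{thm:ec} is used in the procedure of Subsection~\ref{pipeline}, in Example~\ref{ex:rb7}, and in Appendices~\ref{app:AC}--\ref{app:AD}, where $|X(K,n)|$ is bounded by summing the bounds of Theorem~\ref{th:s2} over these (possibly empty) classes. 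So your proposed maximality/pigeonhole argument cannot succeed and is not needed; deleting that final paragraph leaves precisely the paper's (complete) proof.
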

 \begin{proof}
$X(K,n)$ is defined in (\ref{eq:x}) by the set of $x \in X \backslash \{p_1, \hdots, p_K\}$ with $n$ or $K-n$ positive inner products among  $\ip{x}{p_1}, \hdots \ip{x}{p_K}$, whereas it follows from Lemma~\ref{lm:ec} that the equivalence classes $\overline{x}$ are determined by \emph{which} specific inner products are positive.  Thus the theorem follows from a simple combinatorial argument.  The only slight trick is that when $2n=K$, one notes that when $x \in \overline{x} \in X(2n,n)$, $-x \in \overline{x}$ as well. 
\end{proof}
 
 \begin{theorem}\label{th:s2}
  Let $X\subset {\mathbb R}^r$ be an equiangular set with angle $\alpha$. 
 Suppose $K=\kx<(1/\alpha)+1$. 
 Fix a $K$-base $\{p_1, \ldots, p_K\}$ and define subsets $X(K, n)$ for $n=1, \ldots, \lfloor K/2\rfloor$ as in (\ref{eq:x}).
 For $n=1, \ldots,  \lfloor K/2\rfloor$ and for each equivalence class  $\ex\subset X(K, n)$ w.r.t $\{p_1, \ldots, p_K\}$, we have the following upper bounds on $|\ex|$. 
\begin{itemize}
\item[{\em (1)}] If $n=1$,  then
\begin{equation*}\label{bound1}
|\ex|~\leq~ 
\begin{cases}
r-K,  & 1\geq K - \frac{(1/\alpha)+1}{2} 
\\
\frac{1-\alpha}{\ell(K, 1)-\alpha},
& 1<K - \frac{(1/\alpha)+1}{2}.
\end{cases}
\end{equation*}
\item[{\em (2)}] If $1<n< K- \frac{(1/\alpha)+1}{2}$, then
\begin{equation*}\label{bound2}
|\ex|~\leq~ r+1.
\end{equation*}

\item[{\em (3)}] If $n= K- \frac{(1/\alpha)+1}{2}$, then
\begin{equation*}\label{bound3}
|\ex|~\leq~   r- K   +  \lfloor 2\alpha\frac{r-K }{1-\alpha} \rfloor. 
\end{equation*}

\item[{\em (4)}] If  $K- \frac{(1/\alpha)+1}{2} < n < \lfloor\frac{K}{2}\rfloor$, then
\begin{equation*}\label{bound4}
|\ex|~\leq~ 
 s\left(r, \beta, \gamma \right), \quad\textrm{where}\enskip  \beta=\frac{\alpha-\ell(K, n)}{1-\ell(K, n)} \enskip \textrm{and} \enskip\gamma=\frac{-\alpha-\ell(K, n)}{1-\ell(K, n)}.
\end{equation*}
\end{itemize}
\end{theorem}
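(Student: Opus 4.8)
The plan is to reduce all four parts to one normalized picture and then split according to the sign of $\ell(K,n)-\alpha$, which is read off from \eqref{eq:beta}. Fix an equivalence class $\ex\subseteq X(K,n)$. By Corollary~\ref{cry:ec} it is switching equivalent to an equiangular set $Y=\{y_1,\ldots,y_s\}$ with $y_i=h+d_i$ for a fixed $h\in P$ and $d_i\in P^{\bot}$, and switching preserves cardinality, so it suffices to bound $s$. Since $K<(1/\alpha)+1$, Proposition~\ref{lm:easy} gives that $p_1,\ldots,p_K$ are linearly independent, hence $\dim P^{\bot}=r-K=:D$ and all the $d_i$ live in this $D$-dimensional space. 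From $\langle h,h\rangle=\ell:=\ell(K,n)$ (Proposition~\ref{lm:project}) and the equiangularity of $Y$ one reads off $\langle d_i,d_i\rangle=1-\ell>0$ and $\langle d_i,d_j\rangle=\langle y_i,y_j\rangle-\ell\in\{\alpha-\ell,-\alpha-\ell\}$ for $i\neq j$. Setting $e_i:=d_i/\sqrt{1-\ell}$, the $e_i$ are unit vectors in $\bR^{D}$ with $\langle e_i,e_j\rangle\in\{\beta,\gamma\}$, where $\beta=\frac{\alpha-\ell}{1-\ell}$ and $\gamma=\frac{-\alpha-\ell}{1-\ell}$ satisfy $-1<\gamma<\beta<1$; note $\gamma<0$ always, while $\beta$ has the sign of $\alpha-\ell$.

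For part (1), $n=1$, the first step is to show that every inner product inside $\ex$ equals $\alpha$. Choosing the switching in Corollary~\ref{cry:ec} (and, if necessary, replacing $Y$ by $-Y$ and relabeling the $p_m$) so that the common sign vector $\epsilon$ has its unique $+1$ in the first coordinate, suppose $\langle y_i,y_j\rangle=-\alpha$ for some $i\neq j$; then $\{y_i,y_j,p_2,\ldots,p_K\}$ has Gramian $(1+\alpha)I-\alpha J$ of order $K+1$, and by transitivity of switching equivalence this produces a $(K+1)$-element subset of $X$ whose Gramian is switching equivalent to $(1+\alpha)I-\alpha J$, contradicting $\kx=K$ (Definition~\ref{def:kbase}). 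This is precisely where $n=1$ is used: for $n\geq 2$ the analogous clique has only $2+(K-n)\leq K$ vertices. Consequently all $\langle e_i,e_j\rangle$ equal the single value $\beta$, so $G(e_1,\ldots,e_s)=(1-\beta)I+\beta J$. If $1\geq K-\frac{(1/\alpha)+1}{2}$ then $\ell\leq\alpha$, so $\beta\geq 0$ and this matrix is positive definite of rank $s$, forcing $s\leq D=r-K$; if $1<K-\frac{(1/\alpha)+1}{2}$ then $\ell>\alpha$, so $\beta<0$ and positive semidefiniteness forces $1+(s-1)\beta\geq 0$, i.e.\ $s\leq 1-1/\beta=\frac{1-\alpha}{\ell(K,1)-\alpha}$.

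For parts (2)--(4) only the sign of $\beta$ (from \eqref{eq:beta}) is needed. In part (2), $\ell>\alpha$, so $\beta,\gamma<0$ and $d_1,\ldots,d_s$ are nonzero vectors in $\bR^{D}$ with pairwise strictly negative inner products; the classical bound (a short induction on $D$ by projecting onto the orthogonal complement of one of the vectors) shows there are at most $D+1$ of them, so $s\leq D+1=r-K+1\leq r+1$. In part (4), $\ell<\alpha$, so $\beta>0>\gamma$ and $\{e_1,\ldots,e_s\}$ is a spherical two-distance set with mutual inner products $\beta,\gamma$ inside $P^{\bot}\cong\bR^{r-K}\subseteq\bR^r$, whence $s\leq s(r-K,\beta,\gamma)\leq s(r,\beta,\gamma)$. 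Part (3) is the substantive case: here $\ell=\alpha$, so $\beta=0$ and $\langle e_i,e_j\rangle\in\{0,-c\}$ with $c=\frac{2\alpha}{1-\alpha}$. Let $A$ be the $\{0,1\}$ adjacency matrix of the graph $\Gamma$ on $\{1,\ldots,s\}$ with $i\sim j$ iff $\langle e_i,e_j\rangle=-c$; then the Gramian of the $e_i$ is $M=I-cA$, so $M\succeq 0$ gives $\lambda_{\max}(A)\leq 1/c$, and $\rk M\leq D$. Decomposing $\Gamma$ into connected components with adjacency matrices $A_i$ on $s_i$ vertices, $\rk M=\sum_i\rk(I-cA_i)=s-q$, where $q$ counts the components with $\lambda_{\max}(A_i)=1/c$ (by Perron--Frobenius that eigenvalue is then simple, so such a component contributes $s_i-1$ to the rank rather than $s_i$). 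A connected graph of spectral radius $1/c$ has $1/c=\lambda_{\max}(A_i)\leq s_i-1$, hence contributes at least $1/c$ to $\rk M$; summing over the $q$ of them, $q/c\leq\rk M\leq D$, so $q\leq\lfloor cD\rfloor$, and therefore $s=\rk M+q\leq D+\lfloor cD\rfloor=(r-K)+\lfloor 2\alpha(r-K)/(1-\alpha)\rfloor$.

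The main obstacle is part (3): one has to recognize that $M\succeq 0$ together with the rank bound is equivalent to saying that $\Gamma$ has few connected components of spectral radius $1/c$, and that any such component must ``spend'' at least $1/c$ units of rank because it needs at least $1/c+1$ vertices. The other point demanding care is the clique argument in part (1): the switching supplied by Corollary~\ref{cry:ec} must be performed so as not to disturb the $K$-base $\{p_1,\ldots,p_K\}$, so that the forbidden order-$(K+1)$ negative clique genuinely pulls back to a sub-configuration of $X$ and contradicts the maximality defining $\kx$.
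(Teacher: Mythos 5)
Your proof is correct, and its skeleton is exactly the one the paper uses: project each class onto $P^{\bot}$ via Corollary~\ref{cry:ec}, rescale by $\sqrt{1-\ell(K,n)}$ to obtain a spherical two-distance set with inner products $\beta,\gamma$, and split the cases according to the sign of $\ell(K,n)-\alpha$ read off from \eqref{eq:beta} (this is the paper's Lemma~\ref{lm:bound1973} together with Lemmas~\ref{lmbound2}--\ref{lmbound1}). Your part (1) coincides with the paper's: the same forbidden order-$(K+1)$ negative clique (pulled back to $X$ through the switching, which indeed leaves the $K$-base untouched) forces all intra-class inner products to equal $+\alpha$, and the same eigenvalue computation of $(1-\beta)I+\beta J$, equivalently $(1-\alpha)I-(\ell(K,1)-\alpha)J$, gives $r-K$ resp.\ $\frac{1-\alpha}{\ell(K,1)-\alpha}$; part (4) is likewise the paper's direct reduction to $s(r,\beta,\gamma)$. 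Where you genuinely diverge is in parts (2) and (3): the paper outsources these, citing \citep[Theorem 3.2]{BaYu13} for $s(r,\beta,\gamma)\leq r+1$ when $\beta,\gamma<0$ and \cite[Theorem 4.1]{LS1973} for the $\{0,\,-2\alpha/(1-\alpha)\}$ case, whereas you prove both ingredients from scratch: the classical bound of at most $D+1$ pairwise-obtuse nonzero vectors in $\bR^{D}$ (which even yields the marginally sharper $r-K+1$), and a component-wise rank/Perron--Frobenius argument for $M=I-cA$ that in effect reproves Lemmens--Seidel's Theorem 4.1 in dimension $D=r-K$. That spectral step checks out: $M\succeq 0$ forces $\lambda_{\max}(A_i)\leq 1/c$ on each component, a component attaining $1/c$ loses exactly one unit of rank (simplicity of the Perron eigenvalue) yet must have at least $1/c+1$ vertices, so $q/c\leq\rk M\leq D$, giving $q\leq\lfloor cD\rfloor$ and $s=\rk M+q\leq D+\lfloor cD\rfloor$, which is exactly the stated bound with $c=2\alpha/(1-\alpha)$. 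The trade-off is clear: your version is self-contained and slightly sharper in (2)--(4) because you work in dimension $r-K$ rather than $r$, while the paper's is shorter by delegating the two classical bounds to the literature.
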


\begin{corollary}\label{cry:k23}
Let $X\subset {\mathbb R}^r$ be an equiangular set with angle $\alpha$.  
\begin{itemize}
\item[{\em (1)}] If $\kx=2<(1/\alpha)+1$, then
$|X|~\leq~r$. 
\item[{\em (2)}] If $\kx=3<(1/\alpha)+1$, then
$|X|~\leq~3r - 6$.
\end{itemize}
\end{corollary}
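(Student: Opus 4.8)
The plan is to specialize the pillar decomposition~\eqref{eq:decompose} to the two values $K=2$ and $K=3$ and then read off a bound for $|X|$ from the counting result (Theorem~\ref{thm:ec}) and the per-class bounds (Theorem~\ref{th:s2}); in both cases only the index $n=1$ occurs, so only part~(1) of Theorem~\ref{th:s2} is needed. First I would fix a $K$-base $\{p_1,\ldots,p_K\}$ of $X$ normalized as in~\eqref{eq:skbase}. Since $\kx=K<(1/\alpha)+1$ in both parts, Proposition~\ref{pp:simplify} gives $X(K,0)=\emptyset$, and because $\lfloor K/2\rfloor=1$ when $K\in\{2,3\}$, the decomposition~\eqref{eq:decompose} collapses to
\[
X~=~\{p_1,\ldots,p_K\}\;\cup\;X(K,1),\qquad\text{so that}\qquad |X|~=~K+|X(K,1)|.
\]

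Next I would count the equivalence classes inside $X(K,1)$ using Theorem~\ref{thm:ec} at $n=1$: for $K=2$ one has $2n=K$, hence exactly $\tfrac12\binom{2}{1}=1$ equivalence class, while for $K=3$ one has $2n<K$, hence $\binom{3}{1}=3$ equivalence classes. Each class $\overline{x}\subseteq X(K,1)$ is then bounded by Theorem~\ref{th:s2}(1), whose sub-case is selected by comparing $1$ with $K-\tfrac{(1/\alpha)+1}{2}$. For $K=2$ we have $K-\tfrac{(1/\alpha)+1}{2}=\tfrac{3-1/\alpha}{2}$, and the hypothesis $\kx=2<(1/\alpha)+1$ forces $1/\alpha>1$, so $1\ge K-\tfrac{(1/\alpha)+1}{2}$ and $|\overline{x}|\le r-K=r-2$. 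For $K=3$ we have $K-\tfrac{(1/\alpha)+1}{2}=\tfrac{5-1/\alpha}{2}$, and since $1/\alpha\ge 3$ we again get $1\ge K-\tfrac{(1/\alpha)+1}{2}$, hence $|\overline{x}_i|\le r-K=r-3$ for each of the three classes.

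Assembling the pieces then yields $|X|=2+|\overline{x}|\le 2+(r-2)=r$ when $\kx=2$, and $|X|\le 3+3(r-3)=3r-6$ when $\kx=3$, as claimed. The one step that genuinely needs attention is verifying that we land in the first (pure linear-algebra) branch of Theorem~\ref{th:s2}(1) rather than the second: for $K=2$ this is automatic from $\kx<(1/\alpha)+1$, whereas for $K=3$ it requires $1/\alpha\ge 3$. Since $\kx=3<(1/\alpha)+1$ already forces $1/\alpha>2$, this holds under the paper's standing assumption that $1/\alpha$ is an odd integer; absent that assumption one would fall in the regime $2<1/\alpha<3$, where Theorem~\ref{th:s2}(1) only provides $|\overline{x}_i|\le (1-\alpha)/(\ell(3,1)-\alpha)$, and an additional observation would be needed (for instance, that a negative edge inside a class together with $p_2,p_3$ would form a negative $4$-clique, contradicting $\kx=3$, which forces all inner products within a class to equal $+\alpha$).
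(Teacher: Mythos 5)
Your proposal is correct and follows essentially the same route as the paper: specialize the decomposition~(\ref{eq:decompose}) to $K=2,3$, count the equivalence classes via Theorem~\ref{thm:ec}, and bound each class by $r-K$ using Theorem~\ref{th:s2}(1). Your explicit check that the hypothesis $1\geq K-\frac{(1/\alpha)+1}{2}$ (which selects the $r-K$ branch) holds under the standing assumption on $\alpha$ is a point the paper passes over silently, but it does not change the argument.
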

\begin{proof}
Let  $K=\kx$. If $K=2$,  then the partition (\ref{eq:decompose}) becomes 
$X
=\{p_1, p_2\} \bigcup  X(2, 1)$. By Theorem \ref{thm:ec}, the number of equivalence classes $\ex$ in 
$X(2, 1)$ is $\frac{1}{2}\binom{2}{1}=1$. 
By Theorem \ref{th:s2} (1), 
for any $x\in X(2, 1)$, $|\ex|\leq r-K= r - 2$. So we have 
$|X|
=2 + |X(2, 1)|\leq 2+ r-2 =r.$

If $K=3$, then  the partition (\ref{eq:decompose}) becomes 
$X
=\{p_1, p_2, p_3\} \bigcup  X(3, 1)$.
By Theorem \ref{thm:ec}, the number of equivalence classes $\ex$ in 
$X(3, 1)$ is $\binom{3}{1}=3$. 
By Theorem \ref{th:s2} (1), for any $x\in X(3, 1)$, $|\ex|\leq r-K=r-3$. 
So $|X| = 3 + |X(3, 1)| \leq 3+ 3\times (r-3) =3r-6$.
\end{proof}

\subsection{A procedure: piecing together the results}\label{pipeline}

Once we fix a dimension $r$, the basic procedure to determine an upper bound on the size of an equiangular set $X$ in $\bR^r$ is as follows:
\begin{itemize}
\item From Proposition~\ref{angles}, we know that any maximal set of equiangular lines will either have size $2r+3$ or have angle $\alpha = 1/(2L-1)$, where $2\leq L\leq (1+\sqrt{2r})/2$. This gives us finitely many angles to test.
\item For $\alpha = 1/3$ and $r \geq 15$, we may apply~\citep[Theorem 4.5]{LS1973} to determine that $\abs{X} \leq 2(r-1)$.
\item For $\alpha < 1/3$, we consider the possible sizes of a $K$-base in such an $X$, which will be by Propositions~\ref{lm:easy} and~\ref{lm:easy2} between $2$ and $1 + 1/\alpha$.
\item Then for the $K$-base size $1 + 1/\alpha$, we apply Theorem \ref{general} to derive an upper bound, and for each other possible $K$-base size ($<1 + 1/\alpha$), we consider the partition (\ref{eq:decompose}) of the remaining elements of $X$ based on the number ($n$ or $K-n$) of positive inner products with the $K$-base.  By Theorem~\ref{thm:ec}, there will be $\lfloor  K/2\rfloor$ such sets of size at most $\binom{K}{n}$ or $\frac{1}{2}\binom{K}{n}$.
\item By Lemma~\ref{lm:ec}, we then split the partition further into equivalence classes based on \emph{with which} $K$-base elements an element has positive inner products.  We bound the size of each of these equivalence classes by the bounds in Theorem~\ref{th:s2}.
\end{itemize}
Below, we illustrate one example to show how to apply  the above procedure  to compute an upper bound of an equiangular set $X\subset {\mathbb R}^r$  for a particular $r$ and $\alpha$.  This approach will in particular be used to created Figure~\ref{fig:bound7} (Section \ref{experiment}).
\begin{example}\label{ex:rb7}
Suppose $r = 236$ and $\alpha = 1/7$. 
By Propositions~\ref{lm:easy} and~ \ref{lm:easy2},  $K=\kx$ is at most $1+1/\alpha=8$ and at least $2$. Below, we compute an upper bound of $|X|$ for the cases
$K=2, 3, 4, 5, 6, 7$, and  $8$. Overall, the maximum upper bound in these cases is $15673$ and occurs when $K=7$.  
\begin{itemize} 
\item By Corollary \ref{cry:k23}, if $K=2$, then 
$|X|
\leq r  = 236$, and
if $K=3$, then
$|X|
\leq 3r - 6 =702$.
\item If $K=4$, then the partition (\ref{eq:decompose}) becomes $X=\{p_1, p_2, p_3, p_4\}\cup \cup_{n=1}^2 X(4, n)$.
For $n=1$, 
by Theorem \ref{thm:ec}, the number of equivalence classes $\ex$ in 
$X(4, 1)$ is $\binom{4}{1}=4$. 
By  Theorem 
\ref{th:s2} (1), for any $x\in X(4, 1)$, \[|\ex|~\leq~ r-K~=~ 232.\]
For $n=2$, 
by Theorem \ref{thm:ec}, the number of equivalence classes $\ex$ in 
$X(4, 2)$ is $\frac{1}{2}\binom{4}{2}=3$. 
Since $K- ((1/\alpha)+1)/2 =4-(5+1)/2= 1< n = 2$, we will apply the bound in Theorem~\ref{th:s2} (4).  To this end, 
by (\ref{eq:l}), 
we calculate $\ell(4,2) =1/14$, which implies $\beta =(1/7-\ell(4, 2))/(1-\ell(4, 2))=  1/13$, and $\gamma =(-1/7-\ell(4, 2))/(1-\ell(4, 2))= -3/13$,
and for any $x\in X(4, 2)$, 
\[|\ex|~\leq~  s\left(r, 1/13, -3/13 \right) ~\leq~ 1832,\]
where the upper bound $1832$ for $ s\left(r, 1/13, -3/13 \right)$ is computed by Theorem \ref{sdp} (running SDP). 
Thus, by the partition, 
\[|X| 
~=~
4 + |X(4, 1)| + |X(4, 2)|
~\leq~ 4 + 4\cdot 232 +  3\cdot 1832
~=~ 6428.\]
 \item If $K=5$,
then the partition (\ref{eq:decompose}) becomes $X=\{p_1, p_2, p_3, p_4, p_5\}\cup \cup_{n=1}^2 X(5, n)$.
For $n=1$, by Theorem \ref{thm:ec}, the number of equivalence classes $\ex$ in 
$X(5, 1)$ is $\binom{5}{1}=5$. 
By 
Theorem 
\ref{th:s2} (1), for any $x\in X(5, 1)$, \[|\ex|~\leq~ r-K~=~ 231.\]
For $n=2$, by Theorem \ref{thm:ec}, the number of equivalence classes $\ex$ in 
$X(5, 2)$ is $\binom{5}{2}=10$. 
 By  
Theorem 
\ref{th:s2} (4), for any $x\in X(5, 2)$, 
\[|\ex|~\leq~   s\left(r, 1/19, -5/19\right)~ \leq ~935.\]
So by the partition, 
\[|X|
~=~
5 + |X(5, 1)| + |X(5, 2)|
~\leq~ 5 + 5\cdot 231 +  10\cdot 935 ~=~ 10510.
\]
\item If $K=6$, 
then the partition (\ref{eq:decompose}) becomes $X=\{p_1, p_2, p_3, p_4, p_5, p_6\}\cup \cup_{n=1}^3 X(6, n)$.
For $n=1$, 
by Theorem \ref{thm:ec}, the number of equivalence classes $\ex$ in 
$X(6, 1)$ is $\binom{6}{1}=6$. 
By  
Theorem 
\ref{th:s2} (1), for any $x\in X(6, 1)$,
\[|\ex|~\leq~ \frac{1-\alpha}{\ell(K, 1)-\alpha} ~=~ \frac{1-1/7}{\ell(6, 1)-1/7}~=~ 8.\]
For $n=2$, 
by Theorem \ref{thm:ec}, the number of equivalence classes $\ex$ in 
$X(6, 2)$ is $\binom{6}{2}=15$. 

By Theorem 
\ref{th:s2} (3), for any $x\in X(6, 2)$, 
\[|\ex|~\leq ~ r- K  +  \lfloor 2\alpha\frac{r-K }{1-\alpha} \rfloor ~=~ 236 -  6 +  \lfloor 2\cdot \frac{1}{7}\cdot \frac{236 - 6}{1-1/7} \rfloor~ = ~306.\]
For $n=3$, 
by Theorem \ref{thm:ec}, the number of equivalence classes $\ex$ in 
$X(6, 3)$ is $\frac{1}{2}\binom{6}{3}=10$. 
By  Theorem 
\ref{th:s2} (4), for any $x\in X(6, 3)$, 
\[|\ex|~\leq ~  s\left(r, 1/25, -7/25\right) ~\leq~ 635.\]
So by the partition,
\[|X|
~=~
6 + |X(6, 1)| + |X(6, 2)| + |X(6, 3)|
~\leq~ 6 + 6\cdot 8 +  15\cdot 306 + 10\cdot 635~ =~ 10994.
\]
 \item If $K=7$, then 
then the partition (\ref{eq:decompose}) becomes $X=\{p_1, p_2, p_3, p_4, p_5, p_6, p_7\}\cup \cup_{n=1}^3 X(7, n)$.
For $n=1$, 
by Theorem \ref{thm:ec}, the number of equivalence classes $\ex$ in 
$X(7, 1)$ is $\binom{7}{1}=7$. 
By Theorem 
\ref{th:s2} (1), for any $x\in X(7, 1)$, 
\[|\ex|~\leq~ \frac{1-\alpha}{\ell(K, 1)-\alpha}~ = ~ \frac{1-1/7}{\ell(7, 1)-1/7}~=~ 2.\]
For $n=2$,  
by Theorem \ref{thm:ec}, the number of equivalence classes $\ex$ in 
$X(7, 2)$ is $\binom{7}{2}=21$.
By  Theorem 
\ref{th:s2} (2), for any $x\in X(7, 2)$, $|\ex|\leq r+1 = 237$.
For $n=3$, 
by Theorem \ref{thm:ec}, the number of equivalence classes $\ex$ in 
$X(7, 3)$ is $\binom{7}{3}=35$. 
By Theorem 
\ref{th:s2} (3), for any $x\in X(7, 3)$, 
\[|\ex|~\leq~  r- K   +  \lfloor 2\alpha\frac{r-K }{1-\alpha} \rfloor ~=~ 236 -  7 +  \lfloor 2\cdot \frac{1}{7}\cdot \frac{236 - 7}{1-1/7} \rfloor ~=~ 305.\]
So by the partition,
\[|X|
~=~
7 + |X(7, 1)| + |X(7, 2)| + |X(7, 3)|
~\leq~ 7 + 7\cdot 2 +  21\cdot 237 + 35\cdot 305 ~=~  15673.
\]
\item If $K=8$, then by Theorem \ref{general}, 
\begin{align*}
|X| &~ \leq~ K + \frac{1}{2}\binom{K}{K/2}\left(r -  K + 1 +  \lfloor 2\alpha\frac{r-K+1}{1-\alpha} \rfloor \right) \\
     &~ = ~8 + \frac{1}{2}\binom{8}{4}\left(236 -  8 + 1 +  \lfloor 2\cdot \frac{1}{7}\cdot \frac{236 - 8 + 1}{1-1/7} \rfloor \right)  ~= ~10683.
\end{align*}
\end{itemize}
\end{example}

\section{Relative bound for $\alpha = 1/5$}\label{rb5}
Applying the procedure in the previous section as presented in Example \ref{ex:rb7}, one can derive an upper bound for equiangular sets with any angle. But this upper bound might not be optimal. One reason is that when two different equivalence classes are not empty, each equivalence class might not really be filled with the number of vectors given in Theorem \ref{general} or Theorem \ref{th:s2} (evidence of this can be seen in \citep[Theorem 4.3]{LS1973}).
In this section, we use this observation to prove an upper bound when $\alpha=1/5$ (Theorem \ref{thm:rb5}). The basic idea is to apply the procedure but then to analyze further when two equivalence classes are not empty (see the proofs of Lemmas \ref{lm:rb4}--\ref{lm:rb5} in Appendices~\ref{app:AC}--\ref{app:AD}).  One could perform similar analysis for any smaller angles (such as $\alpha=1/7, 1/9, \ldots$), but it would be a very technical task. 



\begin{lem}\label{lem:rv}
For any $r\in \bN$ and $0 \leq \alpha < 1$, there exist at least $r$ vectors in $\bR^r$ with pairwise inner product $\alpha$. 
\end{lem}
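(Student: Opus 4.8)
The plan is to produce an explicit configuration of $r$ unit vectors in $\bR^r$ whose pairwise inner products all equal $\alpha$. The natural guess is that such a configuration is obtained by ``tilting'' the standard orthonormal basis slightly toward a common direction. Concretely, I would look for vectors of the form $v_i = \mu e_i + \nu \mathbf{1}$, where $e_1,\ldots,e_r$ is the standard basis, $\mathbf{1} = \sum_{j=1}^r e_j$, and $\mu,\nu$ are scalars to be chosen. By symmetry every $v_i$ has the same norm and every pair $v_i, v_j$ ($i \neq j$) has the same inner product, so only two scalar equations need to be solved: $\langle v_i, v_i\rangle = 1$ and $\langle v_i, v_j\rangle = \alpha$.

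Carrying this out, $\langle v_i, v_j\rangle = \mu^2 \delta_{ij} + 2\mu\nu + r\nu^2$, so the off-diagonal condition is $2\mu\nu + r\nu^2 = \alpha$ and the diagonal condition is $\mu^2 + 2\mu\nu + r\nu^2 = 1$; subtracting gives $\mu^2 = 1 - \alpha$, hence $\mu = \sqrt{1-\alpha}$ (well-defined and positive since $0 \le \alpha < 1$). Then $\nu$ must satisfy the quadratic $r\nu^2 + 2\mu\nu - \alpha = 0$, whose discriminant $4\mu^2 + 4r\alpha = 4(1-\alpha) + 4r\alpha = 4(1 + (r-1)\alpha)$ is strictly positive, so a real root $\nu$ exists. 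With these choices the Gramian of $\{v_1,\ldots,v_r\}$ is exactly $(1-\alpha)I + \alpha J$, which is the matrix described in Lemma~\ref{pp:basic} with $a = 1$, $b = \alpha$; its eigenvalues are $1 + (r-1)\alpha > 0$ and $1 - \alpha \ge 0$, so it is positive semidefinite of rank $r$, confirming the $v_i$ are genuine unit vectors in $\bR^r$ realizing pairwise inner product $\alpha$ (and in fact linearly independent, by Lemma~\ref{lem:gramspan}).

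An alternative, perhaps cleaner, route is to start from the abstract matrix $M = (1-\alpha)I_r + \alpha J_r$, observe via Lemma~\ref{pp:basic} that it is positive semidefinite with all eigenvalues nonnegative (using $0 \le \alpha < 1$, so $1-\alpha \ge 0$ and $1+(r-1)\alpha > 0$), and then invoke the standard fact (used already in the paper, e.g.\ in the discussion after Lemma~\ref{lm:switch}) that any positive semidefinite $r \times r$ matrix factors as $X^\top X$ for some $r \times r$ matrix $X$; the columns of $X$ are then the desired vectors, unit because the diagonal of $M$ is $1$, and pairwise at inner product $\alpha$. I would likely present this version as the main proof and mention the explicit $\mu e_i + \nu\mathbf{1}$ construction as a concrete witness.

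There is essentially no real obstacle here; the only things to be careful about are the boundary behavior at $\alpha = 0$ (where the statement reduces to the existence of an orthonormal basis, and $\nu = 0$, $\mu = 1$ works) and making sure the positive semidefiniteness argument uses only $\alpha < 1$ and $\alpha \ge 0$ rather than $\alpha > 0$, since the lemma is stated for $0 \le \alpha < 1$. The case $r = 1$ is trivial. So the ``hard part'' is merely bookkeeping: verifying the two scalar equations and the sign of the discriminant, both of which are immediate.
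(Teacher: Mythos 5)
Your proposal is correct, and your ``cleaner'' main route is essentially the paper's own proof: the paper also forms $(1-\alpha)I_r+\alpha J_r$, notes via Lemma~\ref{pp:basic} that its eigenvalues $1+(r-1)\alpha$ and $1-\alpha$ are positive, and factors it (via Cholesky) as a Gramian. Your explicit $\mu e_i+\nu\mathbf{1}$ construction is a nice concrete witness but adds nothing beyond the factorization argument.
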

\begin{proof}
Consider the $r\times r$ real symmetric matrix $G=(1-\alpha)I_r + \alpha J_r$. By Lemma \ref{pp:basic}, $G$ has two  distinct eigenvalues 
\[\lambda_1=1+(r-1)\alpha,\;\;\;\lambda_2=1-\alpha.\]
Both eigenvalues are positive. So $G$ is positive definite. By the well-known Cholesky decomposition, there exists a unique $r\times r$ lower triangular matrix 
$L$ such that $G = LL^{T}$. Suppose the column vectors of $L$ are $x_1, \ldots, x_r$. Then $G$ is the Gramian matrix of $x_1, \ldots, x_r$.
\end{proof}
Geometrically, we may think of ``pushing'' vectors in an orthonormal basis for $\bR^r$ towards each other until the desired angle is achieved. 
\begin{corollary}\label{lm:rb4lower}
For any positive integer $r$,  
if $s\left(r, 1/13, -5/13 \right)$ denotes the maximum cardinality of a spherical two-distance set in ${\mathbb R}^r$ w.r.t.~$1/13$ and $-5/13$, then
\[s\left(r, 1/13, -5/13 \right)~\geq ~r.\]
\end{corollary}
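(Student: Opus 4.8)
The plan is to obtain the lower bound directly from Lemma~\ref{lem:rv}. Since $0 \leq 1/13 < 1$, Lemma~\ref{lem:rv} (applied with $\alpha = 1/13$ and ambient dimension $r$) produces unit vectors $x_1, \ldots, x_r \in \bR^r$ with $\langle x_i, x_j \rangle = 1/13$ for all $i \neq j$; indeed, the matrix $G = (1 - 1/13) I_r + (1/13) J_r$ is positive definite with all diagonal entries equal to $1$, so its Cholesky factor $L$ with $G = L L^{\top}$ has column vectors that are unit vectors with the prescribed pairwise inner product.

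Next I would observe that the set $\{x_1, \ldots, x_r\}$ is, trivially, a spherical two-distance set with mutual inner products $1/13$ and $-5/13$ in the sense of the relevant definition: every off-diagonal inner product equals $1/13$, which is one of the two allowed values. The definition does not require \emph{both} values to be attained, so a ``one-distance'' configuration qualifies. Consequently $|\{x_1, \ldots, x_r\}| = r$ witnesses $s(r, 1/13, -5/13) \geq r$, which is exactly the claim.

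There is essentially no obstacle here: the only point worth flagging is the harmless convention just mentioned (that a spherical two-distance set need not use both inner product values), which is why a set of vectors with a single pairwise inner product still counts toward $s(r, 1/13, -5/13)$. The geometric picture behind Lemma~\ref{lem:rv} — pushing an orthonormal basis slightly toward a common direction until the common angle $\arccos(1/13)$ is reached — makes it clear that such a configuration always exists in $\bR^r$.
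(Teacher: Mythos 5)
Your proof is correct and follows the same route as the paper: both invoke Lemma~\ref{lem:rv} with $\alpha = 1/13$ to produce $r$ unit vectors with pairwise inner product $1/13$, which trivially form an admissible spherical two-distance set for the pair $(1/13,-5/13)$. Your explicit remark that the definition does not require both inner-product values to be attained is a harmless clarification of the same argument.
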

\begin{proof}
We know from Lemma~\ref{lem:rv} that there are at least $r$ vectors with inner product $1/13$.
\end{proof}

For the cases $\km(X)=4, 5$, we present Lemmas \ref{lm:rb4}--\ref{lm:rb5} below. The proofs of the two lemmas are given in Appendices~\ref{app:AC} and~\ref{app:AD}, respectively. 
\begin{lemma}\label{lm:rb4}
Let $X\subset {\mathbb R}^r$ be an equiangular set with angle $1/5$.   If $\km(X)=4$, then
\[|X|~\leq~148  +  3\cdot s\left(r, 1/13, -5/13 \right).\]
\end{lemma}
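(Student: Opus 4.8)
\textbf{Proof proposal for Lemma~\ref{lm:rb4}.} The plan is to run the pillar-decomposition procedure of Section~\ref{pd} with a fixed $4$-base $\{p_1,p_2,p_3,p_4\}$ and then to improve the two non-extremal pieces of the partition. Since $K=\km(X)=4<6=(1/\alpha)+1$, Proposition~\ref{pp:simplify} gives $X(4,0)=\emptyset$, so by~\eqref{eq:decompose} we have $X=\{p_1,p_2,p_3,p_4\}\sqcup X(4,1)\sqcup X(4,2)$. For $X(4,2)$ the bound is immediate: by~\eqref{eq:l} one computes $\ell(4,2)=2/15$, which by~\eqref{eq:beta} lies in the range $0<\ell(4,2)<\alpha$, so Theorem~\ref{th:s2}(4) applies with $\beta=\frac{1/5-2/15}{1-2/15}=\frac1{13}$ and $\gamma=\frac{-1/5-2/15}{1-2/15}=-\frac5{13}$; combined with Theorem~\ref{thm:ec} (which gives $\frac12\binom42=3$ equivalence classes inside $X(4,2)$) this yields $|X(4,2)|\le 3\,s(r,1/13,-5/13)$. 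It then remains to show that $4+|X(4,1)|\le 148$, modulo the caveat addressed below.

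For $X(4,1)$ one exploits that $\ell(4,1)=\alpha$ by~\eqref{eq:l}. Writing $x=h_x+c_x$ with $h_x\in P$ and $c_x\in P^{\perp}$ as in Proposition~\ref{lm:project}(2), this means $\norm{c_x}^2=1-\alpha$ for every $x\in X(4,1)$, so the rescaled tails $\widehat c_x:=(1-\alpha)^{-1/2}c_x$ are unit vectors in $\bR^{r-4}$. By Theorem~\ref{thm:ec} there are $\binom41=4$ equivalence classes $C_1,C_2,C_3,C_4$ (indexed by the position of the single exceptional sign), and Proposition~\ref{lm:project}(2) shows that, after sign-normalising each class, all elements of $C_i$ share the same projection $q_i$ onto $P$ with $\langle q_i,q_i\rangle=\alpha=1/5$ and $\langle q_i,q_j\rangle=1/15$ for $i\ne j$. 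Now $\langle x,y\rangle=-\alpha$ for two vectors of the same $C_i$ would make $\{x,y\}\cup(\{p_1,\dots,p_4\}\setminus\{p_i\})$ five vectors with Gramian $(1+\alpha)I-\alpha J$, contradicting $\km(X)=4$; hence $\langle\widehat c_x,\widehat c_y\rangle=0$ within each $C_i$, i.e.\ each $\widehat C_i$ is orthonormal, and a short computation gives $\langle\widehat c_x,\widehat c_y\rangle\in\{1/6,-1/3\}$ for $x\in C_i$, $y\in C_j$ with $i\ne j$. Thus, for any two non-empty classes $C_i,C_j$, the Gramian of $\widehat C_i\cup\widehat C_j$ is a $2\times2$ block matrix with identity diagonal blocks and off-diagonal block $M$ of size $|C_i|\times|C_j|$ all of whose entries have absolute value at least $1/6$; positive semidefiniteness forces $\norm{M}_{\mathrm{op}}\le 1$, while $|C_i||C_j|/36\le\norm{M}_{F}^2\le\rk(M)\,\norm{M}_{\mathrm{op}}^2\le\min(|C_i|,|C_j|)$ gives $\max(|C_i|,|C_j|)\le 36$. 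Hence if at least two of the $C_i$ are non-empty, $|X(4,1)|\le 4\cdot 36=144$ and $4+|X(4,1)|\le 148$.

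The remaining situation — at most one class $C_i$ non-empty — is the main obstacle, since then $C_i$ may a priori have size up to $r-4$, and the constant $148$ cannot come from $X(4,1)$ alone. The plan here is to pass to a different $4$-base: for $x\in C_i$ the set $\{x\}\cup(\{p_1,\dots,p_4\}\setminus\{p_i\})$ is again a negative clique, and with respect to it $p_i$ together with $C_i\setminus\{x\}$ falls into one new equivalence class, while any $z\in X(4,2)$ with $\langle z,x\rangle=-\alpha$ falls into a second; whenever such a $z$ exists, the two-block argument above applies and bounds $|C_i|$ by a constant. In the residual sub-case, $\langle z,x\rangle=\alpha$ for all $z\in X(4,2)$ and all $x\in C_i$, and one argues instead that, after suitable sign changes, a triple in some $X(4,2)$-class together with one element of $C_i$ and one $p_j$ would produce a negative $5$-clique; so each $X(4,2)$-class is ``triangle-free'' and $|C_i|+|X(4,2)|$ can be bounded against $144+3\,s(r,1/13,-5/13)$. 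Assembling the cases gives $|X|=4+|X(4,1)|+|X(4,2)|\le 148+3\,s(r,1/13,-5/13)$; the full case analysis and the verification of the constant $148$ are the technical content deferred to Appendix~\ref{app:AC}.
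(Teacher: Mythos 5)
Your treatment of $X(4,2)$ (via $\ell(4,2)=2/15$, Theorem~\ref{th:s2}(4) and Theorem~\ref{thm:ec}, giving $3\cdot s(r,1/13,-5/13)$) and of the case where at least two equivalence classes of $X(4,1)$ are non-empty is correct and essentially the paper's argument: you derive the within-class orthogonality of the tails from the non-existence of a negative $5$-clique and then use positive semidefiniteness of a two-block Gramian, where the paper instead takes a single vector $y$ from the other class and applies the arrow-matrix Lemma~\ref{lm:forrb41}, arriving at the same constant $36$ (Lemma~\ref{lm:rb41}(1)).

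The genuine gap is the case in which at most one class $C_i\subset X(4,1)$ is non-empty. Your interim goal ``$4+|X(4,1)|\le 148$'' is not provable there: when $X(4,2)=\emptyset$ and only $C_i$ survives, the only available bound is $|C_i|\le r-4$ (Theorem~\ref{th:s2}(1)), which exceeds $144$ for large $r$, so no argument confined to $X(4,1)$ can give $148$. Your sketched repair (passing to the new $4$-base $\{x\}\cup(\{p_1,\dots,p_4\}\setminus\{p_i\})$, and a ``triangle-free'' analysis of the $X(4,2)$-classes) is never carried out and is not needed. The paper closes this case with two short observations you are missing: (i) if $X(4,2)\neq\emptyset$, pick any $y\in X(4,2)$; its projection $g$ satisfies $\ip{g}{g}=2/15$ and $\ip{h}{g}=\pm1/15$, and the same arrow-matrix computation bounds the lone class by $39$ (Lemma~\ref{lm:rb41}(2)), giving $|X|\le 43+3\,s(r,1/13,-5/13)$; (ii) if $X(4,2)=\emptyset$, then $|X|\le 4+(r-4)=r$, and this is absorbed into $148+3\,s(r,1/13,-5/13)$ because $s(r,1/13,-5/13)\ge r$ (Corollary~\ref{lm:rb4lower}). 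The inequality $s(r,1/13,-5/13)\ge r$ is the ingredient that lets the final bound dominate all the degenerate cases, and it never appears in your proposal; without it (or the explicit constant $39$ from (i)) the case analysis does not assemble into the stated bound.
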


\begin{lemma}\label{lm:rb5}
Let $X\subset {\mathbb R}^r$ be an equiangular set with angle $1/5$.    If $\km(X)=5$, then
\[|X|~\leq~
\begin{cases}
 290, &  23\leq r\leq 185 \\ 
 r+15 +\lfloor (r-5)/2\rfloor,& r\geq 185.
\end{cases}
\]
\end{lemma}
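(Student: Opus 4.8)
The plan is to run the pillar-decomposition procedure from Subsection~\ref{pipeline} with $\alpha=1/5$ and $K=\km(X)=5<(1/\alpha)+1=6$, and then to sharpen the bound by exploiting the fact that distinct nonempty equivalence classes cannot simultaneously be filled to their individual maxima. By Proposition~\ref{pp:simplify} we have $X(5,0)=\emptyset$, so the partition~\eqref{eq:decompose} reads $X=\{p_1,\ldots,p_5\}\cup X(5,1)\cup X(5,2)$. By Theorem~\ref{thm:ec}, $X(5,1)$ splits into $\binom{5}{1}=5$ equivalence classes and $X(5,2)$ into $\binom{5}{2}=10$ equivalence classes. For $n=1$, Theorem~\ref{th:s2}(1) applies (here $1\ge K-((1/\alpha)+1)/2=5-3=2$ is false, so we are in the second case) giving $|\ex|\le (1-\alpha)/(\ell(5,1)-\alpha)$; computing $\ell(5,1)$ from~\eqref{eq:l} yields an explicit small constant, so $|X(5,1)|$ is bounded by $5$ times that constant. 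For $n=2$, since $K-((1/\alpha)+1)/2=2=n$, Theorem~\ref{th:s2}(3) applies, giving $|\ex|\le r-5+\lfloor 2\alpha(r-5)/(1-\alpha)\rfloor = r-5+\lfloor(r-5)/2\rfloor$ for each of the $10$ classes. Summing these naive bounds already produces something like $5 + (\text{small}) + 10\,(r-5+\lfloor(r-5)/2\rfloor)$, which is far from the claimed answer, so the real work is in the refinement.

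The key refinement, following the spirit of \citep[Theorem 4.3]{LS1973} and mirroring the structure of Theorem~\ref{ls1973}, is to pass to the projections onto $P^\bot$: by Corollary~\ref{cry:ec}, after switching, each equivalence class $\ex\subset X(5,2)$ has representatives of the form $h+d_i$ with $h$ fixed and $d_i\in P^\bot$, and after subtracting $h$ and rescaling by $1/\sqrt{1-\ell(5,2)}$ the vectors $d_i$ become unit vectors whose pairwise inner products take two values $\beta,\gamma$ determined by $\alpha$ and $\ell(5,2)$. When $n=2=K-((1/\alpha)+1)/2$ one of these two values equals the ``antipodal'' value, so the $d_i$'s for a single class form an equiangular configuration, and — crucially — vectors from \emph{different} classes in $X(5,2)$, together with the images of the vectors in $X(5,1)$ and even the $p_j$'s, all live in (or project into) a common space $P^\bot$ of dimension $r-5$, where they form a structured two-distance or equiangular set. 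One then builds a single large auxiliary graph or Gram matrix on $P^\bot$ recording the signs of these inner products, and invokes the classification of connected graphs with smallest eigenvalue $\ge -2$ (equivalently largest eigenvalue of the complement/Seidel matrix bounded), exactly as in \citep[Theorems 5.1, 5.7]{LS1973}, to conclude that the total number of such vectors is at most $276$ when $23\le r\le 185$ and at most $r+1+\lfloor(r-5)/2\rfloor$ when $r\ge 185$. Adding back the $5$ base vectors and the bounded contribution of $X(5,1)$ (which contributes at most a further small additive constant, working out to the discrepancy between $276$ and $290$, i.e. $14$, and likewise between $r+1+\lfloor(r-5)/2\rfloor$ and $r+15+\lfloor(r-5)/2\rfloor$) gives the stated bounds.

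Concretely, I would carry out the steps in this order. First, compute $\ell(5,1)$ and $\ell(5,2)$ explicitly from~\eqref{eq:l} and determine which case of Theorem~\ref{th:s2} governs each; record the resulting $\beta,\gamma$ for $n=2$. Second, bound $|X(5,1)|$: each of its $5$ classes has size at most the explicit constant $(1-1/5)/(\ell(5,1)-1/5)$, and I would further argue (via the combinatorial/eigenvalue argument on $P^\bot$, or a direct counting argument as in \citep[Theorem 4.4]{LS1973}) that the total $|X(5,1)|$ contribution is at most $14$, independent of $r$. Third, handle $X(5,2)$: show that the projected, rescaled vectors from all $10$ classes of $X(5,2)$ (union the projections of the $p_j$, which become a small negative-clique-like set) form an equiangular set in $\bR^{r-5}$ with angle related to $1/5$, so that Theorem~\ref{ls1973} — or rather its proof, the classification of graphs with least eigenvalue $\ge -2$ — bounds $|X(5,2)|+5$ by $276$ for $23\le r\le 185$ and by $r+1+\lfloor(r-5)/2\rfloor$ for $r\ge 185$. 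Fourth, assemble: $|X|\le 5 + |X(5,1)| + |X(5,2)| \le 5 + 14 + (276-5) = 290$ in the first range, and $|X|\le r+15+\lfloor(r-5)/2\rfloor$ in the second.

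The main obstacle is Step~3: making precise the claim that vectors lying in different equivalence classes of $X(5,2)$ — which a priori only satisfy the equiangular condition among themselves and the condition~\eqref{def1} with the $p_j$'s — fit together, after projection onto $P^\bot$ and rescaling, into a single equiangular (or tightly controlled two-distance) set to which the $E_8$/line-graph classification of \citep[Theorem 5.1]{LS1973} can be applied. One must check that the cross inner products between representatives of different classes take only the allowed values after rescaling, keep careful track of the sign switches needed to reach the normal form of Corollary~\ref{cry:ec} simultaneously for all classes, and verify that the switching does not destroy the equiangularity of the ambient set; this is where the $14$ (and correspondingly the $15$) in the final bound actually comes from, as the extra room one must allow for the $X(5,1)$ part and for the images of the clique $\{p_1,\ldots,p_5\}$ inside $P^\bot$. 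The boundary value $r=185$ is exactly the threshold at which the $276$-bound from the finite exceptional graph classification is overtaken by the unbounded family, matching the dichotomy in Theorem~\ref{ls1973}.
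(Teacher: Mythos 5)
Your setup (the partition $X=\{p_1,\ldots,p_5\}\cup X(5,1)\cup X(5,2)$, the bound $|\ex|\le(1-\alpha)/(\ell(5,1)-\alpha)=3$ for each of the $5$ classes of $X(5,1)$, hence $|X(5,1)|\le 15$) matches the paper, but the heart of your argument --- Step~3 --- has a genuine gap, which you yourself flag but do not close, and which in fact cannot be closed in the form you state it. Projecting $X(5,2)$ onto $P^\bot$ does \emph{not} produce an equiangular or two-distance set across different equivalence classes: for $x=h_1+c_1$ and $y=h_2+c_2$ in distinct classes of $X(5,2)$ one computes $h_i=-\tfrac13\sum_{j\in S_i}p_j$ with $|S_i|=3$, so $\ip{h_1}{h_2}=\pm 1/15$, and hence $\ip{c_1}{c_2}=\pm\tfrac15\mp\tfrac1{15}\in\{\pm\tfrac2{15},\pm\tfrac4{15}\}$; after rescaling by $1/(1-\ell(5,2))=5/4$ the cross-class values are $\pm\tfrac16,\pm\tfrac13$, while the within-class values are $0$ and $-\tfrac12$. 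So the union of the projected classes is a four-plus-distance configuration, and the classification of graphs with least eigenvalue $\ge -2$ underlying \cite[Theorems 5.1, 5.7]{LS1973} does not apply to it. Moreover the ``projections of the $p_j$'' that you adjoin are identically zero (the $p_j$ span $P$), so the bookkeeping by which you bound $|X(5,2)|+5$ by $276$ is unfounded; and the claim $|X(5,1)|\le 14$, which you need to make $5+14+271=290$ come out, is asserted without proof --- the tools available give $15$, not $14$.

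The paper closes exactly this gap by a different and much simpler device: it never projects $X(5,2)$. Instead it sets $p_6=-\sum_{i=1}^5 p_i$ and checks directly that $\ip{p_j}{p_6}=-\tfrac15$ and that every $x\in X(5,2)$ satisfies $\ip{x}{p_6}=\pm\tfrac15$ (two positive and three negative signs, or vice versa, force this), so that $Y=\{p_6\}\cup\{p_1,\ldots,p_5\}\cup X(5,2)$ is itself an equiangular set in $\bR^r$ with angle $1/5$ and $K_{1/5}(Y)=6$. Theorem~\ref{ls1973} then applies to $Y$ in the ambient space, giving $|Y|\le 276$ for $23\le r\le 185$ and $|Y|\le r+1+\lfloor(r-5)/2\rfloor$ for $r\ge 185$; subtracting the auxiliary vector $p_6$ yields $|\{p_1,\ldots,p_5\}\cup X(5,2)|\le 275$, respectively $r+\lfloor(r-5)/2\rfloor$, and adding $|X(5,1)|\le 15$ gives exactly the stated $290$ and $r+15+\lfloor(r-5)/2\rfloor$. (Note $X(5,1)$ genuinely cannot be absorbed into $Y$, since for $x\in X(5,1)$ one gets $\ip{x}{p_6}=\pm\tfrac35$; this is why the $15$ enters additively.) If you want to salvage your projection-based route, you would have to control the cross-class values $\pm\tfrac16,\pm\tfrac13$ by some other argument, which is precisely the technical work your proposal defers; the augmentation by $p_6$ avoids it entirely.
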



\begin{theorem}\label{thm:rb5}
Let $X\subset {\mathbb R}^r$ be an equiangular set with angle $1/5$.  
If $r>60$, then
 \[|X|~\leq ~148 +  3\cdot s\left(r, 1/13, -5/13 \right).\] 
\end{theorem}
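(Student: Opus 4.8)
The plan is to bound $|X|$ by taking the maximum, over all possible base sizes $\km(X) \in \{2,3,4,5,6\}$, of the upper bound obtained for each case, and to show that when $r > 60$ the case $\km(X) = 4$ always dominates. By Propositions~\ref{lm:easy} and~\ref{lm:easy2}, since $1/\alpha = 5$, the base size $K = \km(X)$ satisfies $2 \leq K \leq 6$, so these are the only cases to consider.

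First I would dispatch the small-base cases. If $K = 2$ then Corollary~\ref{cry:k23}(1) gives $|X| \leq r$; if $K = 3$ then Corollary~\ref{cry:k23}(2) gives $|X| \leq 3r - 6$. If $K = 6 = (1/\alpha)+1$, we are in the extremal regime, so Theorem~\ref{ls1973} applies and gives $|X| \leq \max\{276,\, r + 1 + \lfloor (r-5)/2 \rfloor\}$. The two remaining (and more delicate) cases $K = 4$ and $K = 5$ are handled by the already-stated Lemmas~\ref{lm:rb4} and~\ref{lm:rb5}: respectively $|X| \leq 148 + 3\,s(r,1/13,-5/13)$ and $|X| \leq \max\{290,\, r + 15 + \lfloor (r-5)/2 \rfloor\}$. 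So $s_{1/5}(r)$ is at most the maximum of these five quantities.

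It then remains to verify that for $r > 60$ the quantity $148 + 3\,s(r,1/13,-5/13)$ is at least as large as each of the other four expressions. The key tool is Corollary~\ref{lm:rb4lower}, which gives the lower bound $s(r,1/13,-5/13) \geq r$; hence $148 + 3\,s(r,1/13,-5/13) \geq 148 + 3r$. Comparing: $148 + 3r \geq r$ and $148 + 3r \geq 3r - 6$ are trivial; $148 + 3r \geq 276$ holds once $r \geq 43$, and likewise $148 + 3r \geq 290$ holds once $r \geq 48$; finally $148 + 3r \geq r + 15 + \lfloor (r-5)/2 \rfloor$ reduces to roughly $2r + 133 \geq r/2$, which is clear for all positive $r$. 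Since $r > 60$ comfortably exceeds the thresholds $43$ and $48$, every other case is bounded above by $148 + 3\,s(r,1/13,-5/13)$, and the theorem follows.

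The conceptual heavy lifting is entirely packaged into Lemmas~\ref{lm:rb4} and~\ref{lm:rb5} (proved in the appendices), so within this section the only real subtlety is making sure the comparison inequalities are checked with the correct thresholds — in particular that the hypothesis $r > 60$ is strong enough to absorb the constant $276$ and $290$ terms coming from the $K = 5, 6$ cases, and that one uses the lower bound $s(r,1/13,-5/13) \geq r$ rather than trying to compute it exactly. I do not expect any genuine obstacle here beyond this bookkeeping.
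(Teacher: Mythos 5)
Your proposal is correct and follows essentially the same route as the paper: the same case analysis over $\km(X)\in\{2,\ldots,6\}$ using Corollary~\ref{cry:k23}, Lemmas~\ref{lm:rb4}--\ref{lm:rb5}, and Theorem~\ref{ls1973}, combined with the lower bound $s(r,1/13,-5/13)\geq r$ from Corollary~\ref{lm:rb4lower} to show the $K=4$ bound dominates for $r>60$. Your threshold bookkeeping (absorbing $276$ and $290$) matches the paper's argument, so there is nothing to add.
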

\begin{proof}
By Proposition \ref{lm:easy},  $\km(X)$ is at most $(1/5)^{-1}+1=6$, and by 
Proposition~\ref{lm:easy2}, $\km(X)$ is at least $2$. So it is only possible for $\km(X)$ to be 
$2, 3, 4, 5$, or  $6$.
\begin{itemize}
\item By Corollary \ref{cry:k23}, if $\km(X)=2$, then $|X|\leq r$, and if $\km(X)=3$, then $|X|\leq 3r-6$.
\item If $\km(X)=4$, then by Lemma \ref{lm:rb4}, $|X|\leq ~148 +  3\cdot s\left(r, 1/13, -5/13 \right)$. 
\item If $\km(X)=5$, then by Lemma \ref{lm:rb5}, $|X|\leq\max \left(290,  r+15 + \lfloor(r-5)/2\rfloor\right)$.
\item If $\km(X)=6$, then by Theorem \ref{ls1973}, $|X|\leq\max \left(276,  r+1 + \lfloor(r-5)/2\rfloor\right)$.
\end{itemize}
Overall, $|X|$ should be bounded by the maximum upper bound in the above cases. Note $r\leq 3r-6$ when $r\geq 3$. 
So 
\begin{align*}
|X|&~\leq~ \max\left(r, ~3r-6, ~148+ 3\cdot s\left(r, 1/13, -5/13\right), ~290,  ~r+15 + \lfloor (r-5)/2\rfloor, ~276,  ~r+1 + \lfloor (r-5)/2\rfloor\right)\\
    &~=~ \max\left(~3r-6,  ~148+ 3\cdot s\left(r, 1/13, -5/13\right), ~290,  ~r+15 + \lfloor (r-5)/2\rfloor\right).
\end{align*}
By Corollary~\ref{lm:rb4lower}, $s\left(r, 1/13, -5/13\right)\geq r$. So
\[148+ 3\cdot~s\left(r, 1/13, -5/13\right)~\geq~ 148+3r~ > ~\max\left\{3r-6,r+15 + \lfloor (r-5)/2\rfloor\right\}.\]
and hence 
\[|X|~\leq~\max \left(148+ 3\cdot s\left(r, 1/13, -5/13\right), ~~290\right).\] 
If $r>60$, then we further have
\[148+ 3\cdot s\left(r, 1/13, -5/13\right)~\geq~148 + 3r~> ~148 + 3\times 60 ~>~290.\]
So if $r>60$, 
\[|X|~\leq~148+ 3\cdot s\left(r, 1/13, -5/13\right).\]
\end{proof}
\begin{lemma}\label{lm:gy2016}\cite[Corollary 4]{GY2016}
If  $Y$ is a spherical two-distance set in ${\mathbb R}^r$ w.r.t.~$\alpha$ and $\beta$, then
\[|Y|~\leq~\frac{r+2}{1-\frac{r-1}{r(1-\alpha)(1-\beta)}}.\]
\end{lemma}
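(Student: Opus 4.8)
\emph{Plan.} Write $Y=\{y_1,\dots,y_m\}$, where we may take all $y_i$ to be unit vectors in $\mathbb{R}^r$ (so $Y\subset S^{r-1}$) with $\langle y_i,y_j\rangle\in\{\alpha,\beta\}$ for $i\neq j$; we may also assume the claimed bound is positive, i.e.\ $r(1-\alpha)(1-\beta)>r-1$, since otherwise there is nothing to prove. The argument I would use is the positive-definiteness (Delsarte-type linear/semidefinite-programming) method for spherical codes, packaged through Gram matrices. Put $G=(\langle y_i,y_j\rangle)_{i,j}$, let $J$ be the all-ones matrix, and let $H=(\langle y_i,y_j\rangle^2-\tfrac1r)_{i,j}$. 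All three are positive semidefinite --- $J$ trivially, $G$ because it is a Gram matrix, and $H$ because $H_{ij}=\langle\, y_iy_i^\top-\tfrac1r I,\; y_jy_j^\top-\tfrac1r I\,\rangle_F$ is the Gram matrix of the traceless symmetric matrices $y_iy_i^\top-\tfrac1r I$. Moreover $\operatorname{tr}G=\operatorname{tr}J=m$, $\operatorname{tr}H=\tfrac{r-1}{r}m$, $\operatorname{rank}G\le r$, and $\operatorname{rank}H\le\frac{(r+2)(r-1)}{2}$ (the dimension of the space of traceless symmetric $r\times r$ matrices).

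Next I would use the two-distance hypothesis to linearise the square: for $i\neq j$ we have $(\langle y_i,y_j\rangle-\alpha)(\langle y_i,y_j\rangle-\beta)=0$, i.e.\ $\langle y_i,y_j\rangle^2=(\alpha+\beta)\langle y_i,y_j\rangle-\alpha\beta$, and matching the diagonal gives the exact identity
\[
H\;=\;(\alpha+\beta)\,G\;-\;\bigl(\alpha\beta+\tfrac1r\bigr)J\;+\;(1-\alpha)(1-\beta)\,I .
\]
(Equivalently, in Gegenbauer form: $F(t)=(t-\alpha)(t-\beta)$ expands as $(\tfrac1r+\alpha\beta)G_0^{(r)}-(\alpha+\beta)G_1^{(r)}+\tfrac{r-1}{r}G_2^{(r)}$, vanishes at $\alpha$ and $\beta$, and has $F(1)=(1-\alpha)(1-\beta)$.) Taking the plain trace of this identity only returns a tautology, so a finer averaging is needed.

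To extract the bound I would combine three facts: (i) the identity above; (ii) positivity of $\mathbf 1^\top G\mathbf 1=\lVert\sum_i y_i\rVert^2\ge 0$ and of $\mathbf 1^\top H\mathbf 1=\lVert\sum_i y_iy_i^\top\rVert_F^2-\tfrac{m^2}{r}\ge 0$; and (iii) the Cauchy--Schwarz bound on the nonzero eigenvalues of $H$, namely $(\operatorname{tr}H)^2\le\operatorname{rank}(H)\,\lVert H\rVert_F^2\le\tfrac{(r+2)(r-1)}{2}\,\lVert H\rVert_F^2$, where --- by the same linearisation --- $\lVert H\rVert_F^2$ is an explicit affine function of $m$, of $m(m-1)$, and of $\lVert\sum_i y_i\rVert^2$. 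This turns (iii) into a quadratic inequality in $m$ whose extremal configuration ($\sum_i y_i=0$, i.e.\ a tight frame) is exactly pinned down; solving it should collapse the $\alpha,\beta,r$-dependent constants into $m\le\frac{r+2}{\,1-\frac{r-1}{r(1-\alpha)(1-\beta)}\,}$. (Alternatively, one runs the Delsarte / Bachoc--Vallentin semidefinite bound for two-distance sets and exhibits the closed-form dual feasible point realising this value; this is what is carried out in \cite[Corollary~4]{GY2016}, to which I would ultimately defer.)

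\emph{Main obstacle.} The delicate point is the bookkeeping in the last step: one must choose the right nonnegative combination of the positive-semidefinite and rank constraints so that every $\alpha,\beta,r$-dependent constant cancels down to the stated closed form, and one must verify the argument survives the regime $\tfrac1r+\alpha\beta\le 0$ --- where the elementary linear-programming bound $m\le\frac{r(1-\alpha)(1-\beta)}{1+r\alpha\beta}$ becomes vacuous --- since that is exactly the regime (e.g.\ $\alpha=1/13,\ \beta=-5/13$ with $r$ large) in which Lemma~\ref{lm:gy2016} is applied in Section~\ref{rb5}.
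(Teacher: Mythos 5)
The paper offers no proof of Lemma~\ref{lm:gy2016}: it is imported verbatim from \cite[Corollary 4]{GY2016}, so the only argument in the paper is the citation itself. Your proposal ultimately defers to that same citation, which is consistent with the paper, but the sketch you build around it does not amount to a proof, and its decisive step fails. The setup is fine --- the identity $H=(\alpha+\beta)G-\bigl(\alpha\beta+\tfrac1r\bigr)J+(1-\alpha)(1-\beta)I$, $\operatorname{tr}H=\tfrac{r-1}{r}m$, and $\operatorname{rank}H\le\tfrac{(r+2)(r-1)}{2}$ all check out --- but the extraction step you propose, $(\operatorname{tr}H)^2\le\operatorname{rank}(H)\,\lVert H\rVert_F^2$ with $\lVert H\rVert_F^2$ linearised, cannot produce the stated bound. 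Indeed, linearisation gives $\lVert H\rVert_F^2=m\bigl(\tfrac{r-1}{r}\bigr)^2+c_1\bigl(\lVert\sum_i y_i\rVert^2-m\bigr)+c_0\,m(m-1)$ with $c_0=\alpha^2\beta^2-\alpha\beta(\alpha+\beta)^2+2\alpha\beta/r+1/r^2$. In precisely the regime where the lemma is used in Section~\ref{rb5} ($\alpha=1/13$, $\beta=-5/13$, $r$ large) one has $c_0$ positive and bounded away from zero, so the right-hand side grows like $\operatorname{rank}(H)\cdot c_0\cdot m^2\sim \tfrac{c_0}{2}r^2m^2$, which dominates the left-hand side $\asymp m^2$ for every $m$: the inequality is vacuous and yields no bound on $m$. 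The auxiliary facts $\mathbf 1^\top G\mathbf 1\ge0$ and $\mathbf 1^\top H\mathbf 1\ge0$ do not repair this, since they are lower bounds where upper bounds on the off-diagonal sums would be required.

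So the ``main obstacle'' you flag is not bookkeeping but a missing idea: the positive-semidefiniteness, trace, rank, and Cauchy--Schwarz ingredients you list are collectively too weak to give a bound that is linear in $r$, and the actual argument of \cite{GY2016} relies on additional machinery (their refined use of positive definiteness for polynomials whose zeroth Gegenbauer coefficient $\tfrac1r+\alpha\beta$ is negative), which your sketch does not supply. You do correctly identify that this is exactly the regime where the elementary LP bound $m\le r(1-\alpha)(1-\beta)/(1+r\alpha\beta)$ is vacuous, but identifying the difficulty is not the same as resolving it. For this statement the defensible options are either to cite \cite[Corollary 4]{GY2016} as the paper does, or to reproduce the Glazyrin--Yu proof in full; the intermediate sketch, as written, would not survive.
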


\begin{corollary}\label{cry:rb5}
For a given equiangular set $X\subset {\mathbb R}^r$ with angle $1/5$ if $r>60$, then
\[|X|~\leq ~148 +  \frac{648r(r+2)}{47r+169}.\] 
\end{corollary}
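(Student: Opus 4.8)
The plan is to combine Theorem~\ref{thm:rb5} with the quantitative two-distance bound of Lemma~\ref{lm:gy2016}, applied to the specific inner products $\alpha = 1/13$ and $\beta = -5/13$ that appear in the main relative bound. By Theorem~\ref{thm:rb5}, for $r > 60$ we have $|X| \leq 148 + 3\cdot s(r, 1/13, -5/13)$, so it suffices to estimate $s(r, 1/13, -5/13)$ from above by $\tfrac{216 r(r+2)}{47r + 169}$ and then multiply by $3$.

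First I would instantiate Lemma~\ref{lm:gy2016} with $\alpha = 1/13$ and $\beta = -5/13$. The denominator term is
\[
\frac{r-1}{r(1-\alpha)(1-\beta)} ~=~ \frac{r-1}{r \cdot \tfrac{12}{13}\cdot \tfrac{18}{13}} ~=~ \frac{169(r-1)}{216\, r},
\]
so that
\[
1 - \frac{r-1}{r(1-\alpha)(1-\beta)} ~=~ 1 - \frac{169(r-1)}{216 r} ~=~ \frac{216 r - 169 r + 169}{216 r} ~=~ \frac{47 r + 169}{216 r}.
\]
Hence Lemma~\ref{lm:gy2016} gives
\[
s(r, 1/13, -5/13) ~\leq~ \frac{r+2}{\ \tfrac{47 r + 169}{216 r}\ } ~=~ \frac{216\, r (r+2)}{47 r + 169}.
\]
Multiplying by $3$ yields $3\cdot s(r,1/13,-5/13) \leq \tfrac{648\, r(r+2)}{47 r + 169}$, and adding $148$ and invoking Theorem~\ref{thm:rb5} completes the argument.

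The computation here is entirely routine; there is no real obstacle, since Lemma~\ref{lm:gy2016} and Theorem~\ref{thm:rb5} do all the work and the only task is arithmetic simplification of the denominator $1 - \tfrac{169(r-1)}{216 r} = \tfrac{47 r + 169}{216 r}$. The one point worth double-checking is that $47 r + 169 > 0$ for all relevant $r$ (trivially true for $r > 60$, indeed for all positive $r$), so that the bound in Lemma~\ref{lm:gy2016} is a genuine upper bound rather than a vacuous negative quantity; this is why the hypothesis $r > 60$ from Theorem~\ref{thm:rb5} is carried over without needing further restriction.
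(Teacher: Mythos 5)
Your proposal is correct and follows exactly the paper's own route: invoke Theorem~\ref{thm:rb5} to get $|X|\leq 148+3\cdot s(r,1/13,-5/13)$ for $r>60$, then bound $s(r,1/13,-5/13)$ by Lemma~\ref{lm:gy2016} with $\alpha=1/13$, $\beta=-5/13$, and simplify $1-\tfrac{169(r-1)}{216r}=\tfrac{47r+169}{216r}$ to obtain $\tfrac{216r(r+2)}{47r+169}$. The arithmetic matches the paper's computation, so there is nothing to add.
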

\begin{proof}
By Theorem \ref{thm:rb5} and Lemma \ref{lm:gy2016} \cite[Corollary 4]{GY2016}, 
\[|X|~\leq ~148 +  3\cdot s\left(r, 1/13, -5/13 \right)~\leq~ 148+3\frac{r+2}{1-\frac{r-1}{r(1-1/13)(1+5/13)}} ~=~  148+\frac{648r(r+2)}{47r+169}.\]
\end{proof}

\begin{remark}
We have a few remarks about Theorem \ref{thm:rb5} and Corollary \ref{cry:rb5}.
\begin{itemize}
\item[(i)] Note $s\left(r, 1/13, -5/13 \right)$ in the bound given in Theorem \ref{thm:rb5} can be computed bounded by Theorem \ref{sdp} (running SDP tool). 
Comparing this to the explicit bounds given in Corollary \ref{cry:rb5},  the bounds computed by SDP according to Theorem \ref{thm:rb5} are smaller  for $61\leq r\leq 132$, see the Table \ref{table:rb5}. However, for $r>132$,  Corollary \ref{cry:rb5} gives the smaller bounds.   
{\tiny
\begin{longtable}{|c|c|c|c|c|c|c|c|}\hline
\caption{Compare relative bounds in Theorem \ref{thm:rb5} and Corollary \ref{cry:rb5} $(61\leq r\leq 132)$}\label{table:rb5}\\\hline
dimension & Theorem \ref{thm:rb5} & Corollary \ref{cry:rb5}&dimension & Theorem \ref{thm:rb5} & Corollary \ref{cry:rb5}\\\hline
$r$ & $148+ 3\cdot{\mathrm s}\left(r, \frac{1}{13}, -\frac{5}{13}\right)$ & $148 +  \frac{648r(r+2)}{47r+169}$ &$r$ &$148+ 3\cdot{\mathrm s}\left(r, \frac{1}{13}, -\frac{5}{13}\right)$& $148 +  \frac{648r(r+2)}{47r+169}$ \\\hline
  $ 61$ &$  586$&$  968$&  $ 62$&$  595$&$  982$\\\hline
  $ 63$ &$  604$&$  995$&  $ 64$&$  616$&$ 1009$\\\hline
  $ 65$ &$  625$&$ 1023$&  $ 66$&$  637$&$ 1037$\\\hline
  $ 67$ &$  646$&$ 1050$&  $ 68$&$  658$&$ 1064$\\\hline
  $ 69$ &$  667$&$ 1078$&  $ 70$&$  679$&$ 1092$\\\hline
  $ 71$ &$  691$&$ 1105$&  $ 72$&$  703$&$ 1119$\\\hline
  $ 73$ &$  715$&$ 1133$&  $ 74$&$  727$&$ 1147$\\\hline
  $ 75$ &$  739$&$ 1161$&  $ 76$&$  751$&$ 1174$\\\hline
  $ 77$ &$  763$&$ 1188$&  $ 78$&$  775$&$ 1202$\\\hline
  $ 79$ &$  787$&$ 1216$&  $ 80$&$  799$&$ 1229$\\\hline
  $ 81$ &$  814$&$ 1243$&  $ 82$&$  826$&$ 1257$\\\hline
  $ 83$ &$  841$&$ 1271$&  $ 84$&$  853$&$ 1285$\\\hline
  $ 85$ &$  868$&$ 1298$&  $ 86$&$  883$&$ 1312$\\\hline
  $ 87$ &$  898$&$ 1326$&  $ 88$&$  910$&$ 1340$\\\hline
  $ 89$ &$  925$&$ 1353$&  $ 90$&$  943$&$ 1367$\\\hline
  $ 91$ &$  958$&$ 1381$&  $ 92$&$  973$&$ 1395$\\\hline
  $ 93$ &$  988$&$ 1409$&  $ 94$&$ 1006$&$ 1422$\\\hline
  $ 95$ &$ 1021$&$ 1436$&  $ 96$&$ 1039$&$ 1450$\\\hline
  $ 97$ &$ 1057$&$ 1464$&  $ 98$&$ 1072$&$ 1477$\\\hline
  $ 99$ &$ 1090$&$ 1491$&  $100$&$ 1108$&$ 1505$\\\hline
  $101$ &$ 1126$&$ 1519$&  $102$&$ 1147$&$ 1533$\\\hline
  $103$ &$ 1165$&$ 1546$&  $104$&$ 1186$&$ 1560$\\\hline
  $105$ &$ 1204$&$ 1574$&  $106$&$ 1225$&$ 1588$\\\hline
  $107$ &$ 1246$&$ 1601$&  $108$&$ 1267$&$ 1615$\\\hline
  $109$ &$ 1288$&$ 1629$&  $110$&$ 1309$&$ 1643$\\\hline
  $111$ &$ 1333$&$ 1657$&  $112$&$ 1354$&$ 1670$\\\hline
  $113$ &$ 1378$&$ 1684$&  $114$&$ 1402$&$ 1698$\\\hline
  $115$ &$ 1426$&$ 1712$&  $116$&$ 1453$&$ 1725$\\\hline
  $117$ &$ 1477$&$ 1739$&  $118$&$ 1504$&$ 1753$\\\hline
  $119$ &$ 1531$&$ 1767$&  $120$&$ 1558$&$ 1781$\\\hline
  $121$ &$ 1585$&$ 1794$&  $122$&$ 1612$&$ 1808$\\\hline
  $123$ &$ 1642$&$ 1822$&  $124$&$ 1672$&$ 1836$\\\hline
  $125$ &$ 1702$&$ 1850$&  $126$&$ 1732$&$ 1863$\\\hline
  $127$ &$ 1765$&$ 1877$&  $128$&$ 1798$&$ 1891$\\\hline
  $129$ &$ 1831$&$ 1905$&  $130$&$ 1867$&$ 1918$\\\hline
  $131$ &$ 1900$&$ 1932$&  $132$&$ 1936$&$ 1946$\\\hline
\end{longtable}

}
\pagebreak
\item[(ii)] Notice also that the relative bound given in Corollary \ref{cry:rb5} is asymptotically $\frac{648}{47}r\sim 13.8r$. 
 For any $r>60$, this upper bound is smaller and thus better than \cite[Theorem 3]{GY2016}
\[r(\frac{2}{3}a^2+\frac{4}{7}) + 2 \bigg\vert_{a=5}~=\frac{362}{21}r + 2\sim 17.2r.\]
\item[(iii)] In \citep{neumaier1989}, it is proven that there exists a large integer $N$ such that for any $r\geq N$, the relative bound for 
$\alpha=1/5$ is 
\[r+1 +\lfloor (r-5)/2\rfloor\sim 1.5r.\]
The $N$ stated in \citep[p.~155]{neumaier1989} is claimed to be between $2486$ and $45374$ without proof. 
However, there is still a big gap between the above $1.5r$ and the $13.8r$ in Corollary \ref{cry:rb5}. It would be interesting to shorten the gap 
for $61<r<N$.
\end{itemize}
\end{remark}





\section{Computational Result and Conjecture}\label{main}
We are now prepared to show upper bounds for equiangular sets for $44\leq r\leq 400$. 
\begin{theorem}\label{thm:main}
For $44\leq r\leq 400$, let $m$ be the largest positive integer such that $(2m+1)^2\leq r + 2$. Then an 
upper bound of the maximum number of equiangular lines in ${\mathbb R}^r$ is
\begin{equation}\label{maineq}
\begin{cases}
\frac{4r\left(m+1\right)\left(m+2\right)}{\left(2m+3\right)^2-r}, & r=44, 45, 46, 76, 77, 78, 117, 118, 166, 222, 286, 358\\
\\
\frac{\left(\left(2m+1\right)^2-2\right)\left(\left(2m+1\right)^2-1\right)}{2},&  \text{other}~r~\text{between}~44~\text{and}~400, 
\end{cases}
\end{equation}
and if the upper bound in (\ref{maineq}) can be attained, then the relative angle is 
\begin{equation}\label{mainan}
\begin{cases}
\frac{1}{2m+3}, & r=44, 45, 46, 76, 77, 78, 117, 118, 166, 222, 286, 358\\
\\
\frac{1}{2m+1},&  \text{other}~r~\text{between}~44~\text{and}~400. 
\end{cases}
\end{equation}
\end{theorem}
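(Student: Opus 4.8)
The plan is to combine the relative bounds proved earlier in the paper with the classical bounds for the small angles $\alpha = 1/3$ and $\alpha \leq 1/7$, apply Proposition~\ref{angles} to reduce to finitely many angles, and then verify the resulting maximum numerically for each $r$ in the stated range. Concretely, fix $r$ with $44 \leq r \leq 400$. By Proposition~\ref{angles}, any equiangular set $X \subset \bR^r$ with $|X| > 2r+3$ has angle $\alpha = 1/(2L-1)$ for some integer $L$ with $2 \leq L \leq (1+\sqrt{2r})/2$; since $r \leq 400$ this forces $L \leq 14$ (as $(2m+1)^2 \leq r+2$ with $m$ as in the statement means the relevant $L$'s are $m+1$ and nearby), so only finitely many angles need to be checked. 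For each such angle one obtains an upper bound on $s_\alpha(r)$: for $\alpha = 1/3$ use \citep[Theorem 4.5]{LS1973}, i.e. $s_{1/3}(r) \leq 2(r-1)$; for $\alpha = 1/5$ use Theorem~\ref{thm:rb5} together with an SDP evaluation of $s(r,1/13,-5/13)$ and the explicit bound of Corollary~\ref{cry:rb5}; for $\alpha = 1/7$ use the procedure of Subsection~\ref{pipeline} as illustrated in Example~\ref{ex:rb7}; and for $\alpha = 1/9, 1/11, \dots$ apply the general procedure (Theorems~\ref{general},~\ref{thm:ec},~\ref{th:s2}) supplemented by SDP bounds for the two-distance sets that arise. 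Taking the maximum over all admissible $\alpha$, and also over the trivial alternative $2r+3$, yields an upper bound on $s(r)$.

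The content of the theorem is then the \emph{identification} of this maximum with the closed-form expression in~(\ref{maineq}). First I would observe that for most $r$ the dominant contribution comes from $\alpha = 1/(2m+1)$, where $(2m+1)^2 \leq r+2 < (2m+3)^2$: in this regime the angle is ``large enough'' that the pillar decomposition forces $\kx = (1/\alpha)+1 = 2m+2$, and the bound of Theorem~\ref{general}, after the graph-theoretic reductions used to prove Theorems~\ref{ls1973} and Lemma~\ref{lm:rb5} (and their analogues), collapses to the ETF-type value $\frac{((2m+1)^2-2)((2m+1)^2-1)}{2}$ — this is exactly Gerzon's bound in dimension $(2m+1)^2-2$, matching the known absolute-bound phenomenon that equality in Gerzon's bound requires $r+2$ to be an odd square. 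Second, for the twelve exceptional values $r = 44,45,46,76,77,78,117,118,166,222,286,358$, which all sit just \emph{above} an odd square $(2m+1)^2$, the winning angle turns out instead to be $\alpha = 1/(2m+3)$, and the bound $\frac{4r(m+1)(m+2)}{(2m+3)^2 - r}$ is recognized as the ``$n = K - \frac{(1/\alpha)+1}{2}$''-type relative bound obtained by specializing Theorem~\ref{th:s2}(3) (or the relevant case of Theorem~\ref{general}) with $K = 2m+3$ to this narrow window of dimensions. One checks directly that in this window $\frac{4r(m+1)(m+2)}{(2m+3)^2-r}$ exceeds the value coming from $\alpha = 1/(2m+1)$, and conversely that outside this window it is dominated.

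The bulk of the proof is therefore a finite but substantial computation: for each of the $357$ dimensions and each of the at most $\sim\!13$ candidate angles, run the procedure of Subsection~\ref{pipeline}, invoking an SDP solver for every two-distance sub-bound $s(r,\beta,\gamma)$ that appears, then take the coordinatewise maximum and match it against~(\ref{maineq}). I would organize this as: (i) a lemma pinning down, for each $r$, which angle achieves the maximum (with the twelve exceptions handled by an explicit inequality comparison between the $\alpha = 1/(2m+3)$ and $\alpha = 1/(2m+1)$ bounds); (ii) a verification that for the non-exceptional $r$ the $\alpha = 1/(2m+1)$ bound simplifies to the stated Gerzon-type quantity — here one leans on Theorem~\ref{thm:rb5}/Lemma~\ref{lm:rb5} for $\alpha=1/5$, on Theorem~\ref{ls1973} for the $K = (1/\alpha)+1$ sub-case, and on the explicit evaluation of Theorem~\ref{general}'s formula with the floor term; and (iii) the assertion about the extremal angle in~(\ref{mainan}), which simply records which $\alpha$ the maximum was attained at in step (i). The main obstacle I anticipate is step (ii): showing that the generically-huge upper bounds produced by the crude pillar procedure actually reduce, for the angles $1/7, 1/9, \dots$ and the non-exceptional dimensions, to something no larger than the $\alpha=1/(2m+1)$ value — this requires either sharp SDP outputs or additional structural arguments (the same ``two nonempty pillars cannot both be full'' phenomenon exploited in Section~\ref{rb5}), and keeping track of it uniformly across all $r \leq 400$ is where the real work lies; the rest is bookkeeping and a table of SDP values.
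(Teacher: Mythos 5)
Your overall outline---reduce to finitely many candidate angles via Proposition~\ref{angles}, bound $s_\alpha(r)$ for each angle, take the maximum over angles, and match the result against (\ref{maineq})---is the same as the paper's. However, the per-angle bounds you propose would not reproduce the stated numbers, and this is where your plan breaks. The values in (\ref{maineq}), namely the plateau $((2m+1)^2-2)((2m+1)^2-1)/2$ for non-exceptional $r$ and $4r(m+1)(m+2)/((2m+3)^2-r)=r(1-\alpha^2)/(1-r\alpha^2)$ at $\alpha=1/(2m+3)$ for the twelve exceptional $r$, come from the \emph{direct} SDP bound of Theorem~\ref{sdp} applied to $s(r,\alpha,-\alpha)$: the paper uses the direct SDP bound alone for $\alpha\le 1/9$, and for $\alpha=1/5$ and $1/7$ it takes the minimum of the direct SDP bound with Theorem~\ref{thm:rb5}/Corollary~\ref{cry:rb5} and with the pillar procedure of Example~\ref{ex:rb7}, respectively. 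You instead propose running the pillar procedure (Theorems~\ref{general},~\ref{thm:ec},~\ref{th:s2}, with SDP invoked only for the two-distance subproblems) for $\alpha=1/9,1/11,\dots$. In the relevant range this is far weaker: for $\alpha=1/9$ and $r\approx 100$ the $K=10$ case of Theorem~\ref{general} alone already gives $10+126\,(91+22)\approx 1.4\times 10^4$, whereas the theorem needs $3160$; since the procedure must take the maximum over all possible base sizes $K$, your bound for that angle would dwarf (\ref{maineq}) and the final matching step would fail. Without including the direct SDP bound on $s_\alpha(r)$ for every angle (and taking minima with the pillar-based bounds only where they help, i.e.\ large $r$ for $\alpha=1/5,1/7$), the maximum over angles does not collapse to the stated formula. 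Note also that Theorem~\ref{thm:rb5} and Corollary~\ref{cry:rb5} require $r>60$, so for $44\le r\le 60$ the angle $1/5$ must be handled by the direct SDP bound as well.

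Relatedly, the structural explanations in your identification step are not available and misattribute the sources of the two formulas. Nothing forces $\kx=(1/\alpha)+1$, and there is no graph-theoretic ``collapse'' of the pillar bound to the Gerzon-type value for $\alpha\le 1/7$; the paper explicitly points out (citing \citep{neumaier1989}) that extending the Lemmens--Seidel analysis below $1/5$ requires substantially stronger techniques, and the plateau value is simply what the SDP computation returns over those dimension ranges. Likewise, the exceptional formula is not the bound of Theorem~\ref{th:s2}(3) (which is linear in $r$) but the classical relative bound $r(1-\alpha^2)/(1-r\alpha^2)$ at $\alpha=1/(2m+3)$, which blows up precisely because these twelve $r$ sit just \emph{below} $(2m+3)^2-2$ (not ``just above $(2m+1)^2$''), and there it exceeds the plateau for $\alpha=1/(2m+1)$. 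Once the per-angle bounds are chosen as in the paper, the remainder of your plan---the finite computation and the comparison with (\ref{maineq})--(\ref{mainan})---is exactly the paper's proof, which is itself a reported computation rather than a closed-form derivation.
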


\begin{proof}
For any $44\leq r\leq 400$, suppose $L$ is the largest positive integer such that $(2L- 1)^2\leq 2r$. For each $r$, by Proposition~\ref{angles}, we compute the relative bounds for each $\alpha = 1/5, 1/7, \ldots, 1/(2L-1)$ and then pick up the maximum.
We summarize the computational results in Table~\ref{proof} (see the complete computational results online \citep{link2018}) 
\begin{table}[h]
\caption{Upper bound of equiangular lines in ${\mathbb R}^r$ for $44\leq r\leq 400$}\label{proof}
\begin{center}
{
\begin{tabular}{c||ccccccc}
$r$&             $44$ & $45$ & $46$  & $47$--$75$ & $76$ & $77$   \\
$|X|\leq$& $422$ & $540$ & $736$ & $1128$&$1216$& $1540$  \\
$\mathrm{arg}\underset{\alpha}\max~|X|$& $\frac{1}{7}$ & $\frac{1}{7}$ & $\frac{1}{7}$ & $\frac{1}{7}$&$\frac{1}{9}$& $\frac{1}{9}$  \\
\hline
\hline
$r$& $78$ & $79$--$116$ & $117$ & $118$&$119$--$165$ & $166$ \\
$|X|\leq$& $2080$ & $3160$ & $3510$ & $4720$& $7140$  & $9296$  \\
$\mathrm{arg}\underset{\alpha}\max~|X|$& $\frac{1}{9}$ & $\frac{1}{9}$ & $\frac{1}{11}$ & $\frac{1}{11}$&$\frac{1}{11}$& $\frac{1}{13}$  \\
\hline
\hline
$r$&$167$--$221$ & $222$ &$223$--$285$ & $286$ & $287$--$357$ & $358$ & $359$--$400$\\
$|X|\leq$ &$14028$ & $16576$& $24976$ &  $27456$&  $41328$  & $42960$ & $64620$\\
$\mathrm{arg}\underset{\alpha}\max~|X|$& $\frac{1}{13}$ & $\frac{1}{15}$ & $\frac{1}{15}$ & $\frac{1}{17}$&$\frac{1}{17}$& $\frac{1}{19}$ &    $\frac{1}{19}$
\end{tabular}
}
\end{center}
\end{table}

\noindent
More specifically, for $\alpha=1/5$, we compute three
upper bounds by Thoerem~\ref{sdp}, Theorem~\ref{thm:rb5}, and Corollary \ref{cry:rb5}, respectively, and then we pick the smaller one of these three upper bounds. 
For $\alpha=1/7$, we compute 
two upper bounds by Theorem~\ref{sdp} and the procedure in Section \ref{pipeline} (Example \ref{ex:rb7}), respectively, and then we pick the smaller one of these two upper bounds. 
For $\alpha = 1/9, \ldots, 1/(2L-1)$,  we compute the SDP bound
according to Theorem~\ref{sdp}.  
One can check Table~\ref{proof} is equivalent to (\ref{maineq}--\ref{mainan}).  
\end{proof}

We remark that more computations can be carried out for $r>400$. For those large $r$'s,  in order to get a non-trivial bound (less than Gerzon's bound) for $\alpha \leq 1/9$, one can apply the procedure like what we have done for $\alpha=1/7$ in Example \ref{ex:rb7}. According to Table~\ref{known}, Theorem~\ref{thm:main}, and 
 our further experiments, we propose a conjecture below. 

\begin{conjecture}\label{conjecture}
For any $r$, if $m$ is the largest positive integer such that $(2m+1)^2\leq r + 2$, then an upper bound  of maximum number of equiangular lines in ${\mathbb R}^r$ is either 
$\frac{4r\left(m+1\right)\left(m+2\right)}{\left(2m+3\right)^2-r}$ or $\frac{\left(\left(2m+1\right)^2-2\right)\left(\left(2m+1\right)^2-1\right)}{2}$. 
\end{conjecture}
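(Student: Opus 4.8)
The plan is to reduce the statement to a finite, angle-by-angle computation. Fix $r$ with $44 \le r \le 400$. By Proposition~\ref{angles}, any equiangular set $X \subset \bR^r$ either satisfies $|X| \le 2r+3$ or has angle $\alpha = 1/(2L-1)$ for some integer $L$ with $2 \le L \le (1+\sqrt{2r})/2$; since $2r+3$ lies far below the claimed bound throughout the range, it suffices to bound $s_\alpha(r)$ for each of the finitely many angles $\alpha = 1/3, 1/5, \ldots, 1/(2L-1)$ and take the maximum. For $\alpha = 1/3$ (and $r \ge 15$), \citep[Theorem 4.5]{LS1973} gives $s_{1/3}(r) \le 2(r-1)$, which is negligible here.

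For $\alpha = 1/5$ and $r > 60$, I would compute three upper bounds --- the SDP bound of Theorem~\ref{sdp} for the two-distance set with inner products $1/5, -1/5$; the pillar bound $148 + 3\,s(r,1/13,-5/13)$ of Theorem~\ref{thm:rb5}, with $s(r,1/13,-5/13)$ itself bounded via Theorem~\ref{sdp}; and the explicit bound $148 + 648r(r+2)/(47r+169)$ of Corollary~\ref{cry:rb5} --- and keep the smallest, while for $44 \le r \le 60$ only the SDP bound is available (it equals $276$). For $\alpha = 1/7$, I would run the pillar-decomposition procedure of Subsection~\ref{pipeline} exactly as in Example~\ref{ex:rb7}: enumerate the possible base sizes $K = \kx = 2,\ldots,8$; for each $K < 8$ use the partition~(\ref{eq:decompose}) into the sets $X(7,n)$, bound the number of equivalence classes in each $X(7,n)$ by Theorem~\ref{thm:ec}, bound each equivalence class by the appropriate case of Theorem~\ref{th:s2}, handle $K = 8$ by Theorem~\ref{general}, sum the contributions, and take the maximum over $K$; then keep the smaller of this and the direct SDP bound. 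For each remaining $\alpha = 1/9, 1/11, \ldots, 1/(2L-1)$, I would use the SDP bound of Theorem~\ref{sdp} directly, which already suffices to stay below the target in this dimension range.

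Carrying this out for every $r \in [44,400]$ produces Table~\ref{proof}. The final step is pure arithmetic: verify that the recorded maximum over $\alpha$ agrees with the closed form~(\ref{maineq}), i.e.\ that $4r(m+1)(m+2)/((2m+3)^2-r)$ reproduces the tabulated value for the twelve exceptional dimensions $r = 44,45,46,76,77,78,117,118,166,222,286,358$ and $((2m+1)^2-2)((2m+1)^2-1)/2$ for all other $r$, with $m$ the largest integer satisfying $(2m+1)^2 \le r+2$; and likewise that the maximizing angle is correspondingly $1/(2m+3)$ or $1/(2m+1)$, which gives~(\ref{mainan}).

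The main obstacle is making the SDP step rigorous rather than merely numerical: the programs of \citep{BaYu13} must be solved so as to yield a certified integer upper bound --- for instance by exhibiting a dual-feasible solution in exact (rational) arithmetic, or by validating a numerically obtained dual certificate with interval arithmetic --- since a floating-point optimum is not itself a proof. The remaining difficulty is bookkeeping: although the set of $(r,\alpha,K,n)$ cases is finite, it is large, and one must check that each invocation of Theorems~\ref{general},~\ref{thm:ec},~\ref{th:s2},~\ref{thm:rb5} and Corollary~\ref{cry:rb5} respects its hypotheses --- in particular the dichotomy $K < (1/\alpha)+1$ versus $K = (1/\alpha)+1$, and the restriction $r > 60$ attached to the $\alpha = 1/5$ bounds.
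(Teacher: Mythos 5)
There is a fundamental mismatch of scope: the statement you are asked to prove is Conjecture~\ref{conjecture}, which asserts the bound for \emph{every} dimension $r$, whereas your entire argument begins ``Fix $r$ with $44 \le r \le 400$'' and never leaves that window. What you have outlined is, almost verbatim, the paper's proof of Theorem~\ref{thm:main} (the finite computation behind Table~\ref{proof}), not a proof of the conjecture. The paper itself does not prove the conjecture --- it is explicitly proposed as an extrapolation from Table~\ref{known}, Theorem~\ref{thm:main}, and further experiments --- so reproducing the finite computation cannot close it. Worse, your method has no mechanism for extending beyond any fixed range: as $r$ grows, the number of admissible angles $1/(2L-1)$ grows, and the paper's own evidence (Section~\ref{experiment} and \citep[Table 3]{BaYu14}) shows that the plain SDP bound at a fixed small angle eventually exceeds Gerzon's bound (already at $r\ge 137$ for $\alpha=1/5$ and $r\ge 259$ for $\alpha = 1/7$), while the sharper pillar-based bounds that rescue the situation exist in closed form only for $\alpha = 1/5$ (Theorem~\ref{thm:rb5}, Corollary~\ref{cry:rb5}) and algorithmically for $\alpha=1/7$; for $\alpha \le 1/9$ nothing uniform in $r$ is available. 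So ``carry out the computation for every $r$'' is not a proof strategy for a statement quantified over all $r$: one would need either an infinite computation or a new uniform argument (e.g.\ closed-form relative bounds for all angles $1/(2m+1)$, $1/(2m+3)$ valid for all sufficiently large $r$), and the proposal contains no such idea.

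Two smaller points. First, your remark about certifying the SDP step is well taken --- the paper rounds floating-point solver output to the nearest integer, so even the finite-range Theorem~\ref{thm:main} is a computational result rather than a certified one --- but fixing that would still only yield the $44\le r\le 400$ statement. Second, within the finite range your bookkeeping matches the paper's procedure (Proposition~\ref{angles} to restrict angles, Theorem~\ref{general}, Theorem~\ref{thm:ec}, Theorem~\ref{th:s2}, Theorem~\ref{thm:rb5}, Corollary~\ref{cry:rb5}, and SDP for the remaining angles), so as a proof of Theorem~\ref{thm:main} your outline would be essentially the paper's own; it is only as a proof of Conjecture~\ref{conjecture} that it fails.
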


\section{Experiments}\label{experiment}
We compare the relative bounds for $\alpha=1/5$ given by Corollary~\ref{cry:rb5} and the basic SDP method (Theorem \ref{sdp}) without pillar decomposition 
\citep{BaYu14}
in 
Figure~\ref{fig:bound5}.  For each $r$ between $44$ and $400$, we compute the 
two upper bounds by Corollary~\ref{cry:rb5} and Theorem \ref{sdp} for $\alpha=1/5$, respectively. 
In Figure~\ref{fig:bound5}, we mark by blue plusses ``$+$" the computed upper bound according to Corollary~\ref{cry:rb5}, and we mark 
the bounds due to Theorem \ref{sdp} (SDP bounds without pillar decomposition) by red stars ``$\star$". We also draw Gerzon's bound as a black curve.

 Similarly, in the 
Figure~\ref{fig:bound7}, we compare the relative bounds for $\alpha=1/7$ computed by the procedure shown in Example~\ref{ex:rb7} and the SDP method without pillar decomposition.

\begin{figure}[h]
\begin{minipage}{0.5\linewidth}
\caption{\footnotesize{Comparing Corollary \ref{cry:rb5} and Theorem \ref{sdp}}
}
\label{fig:bound5}
\begin{center}
\includegraphics[height=2.6 in, width=2.9 in]{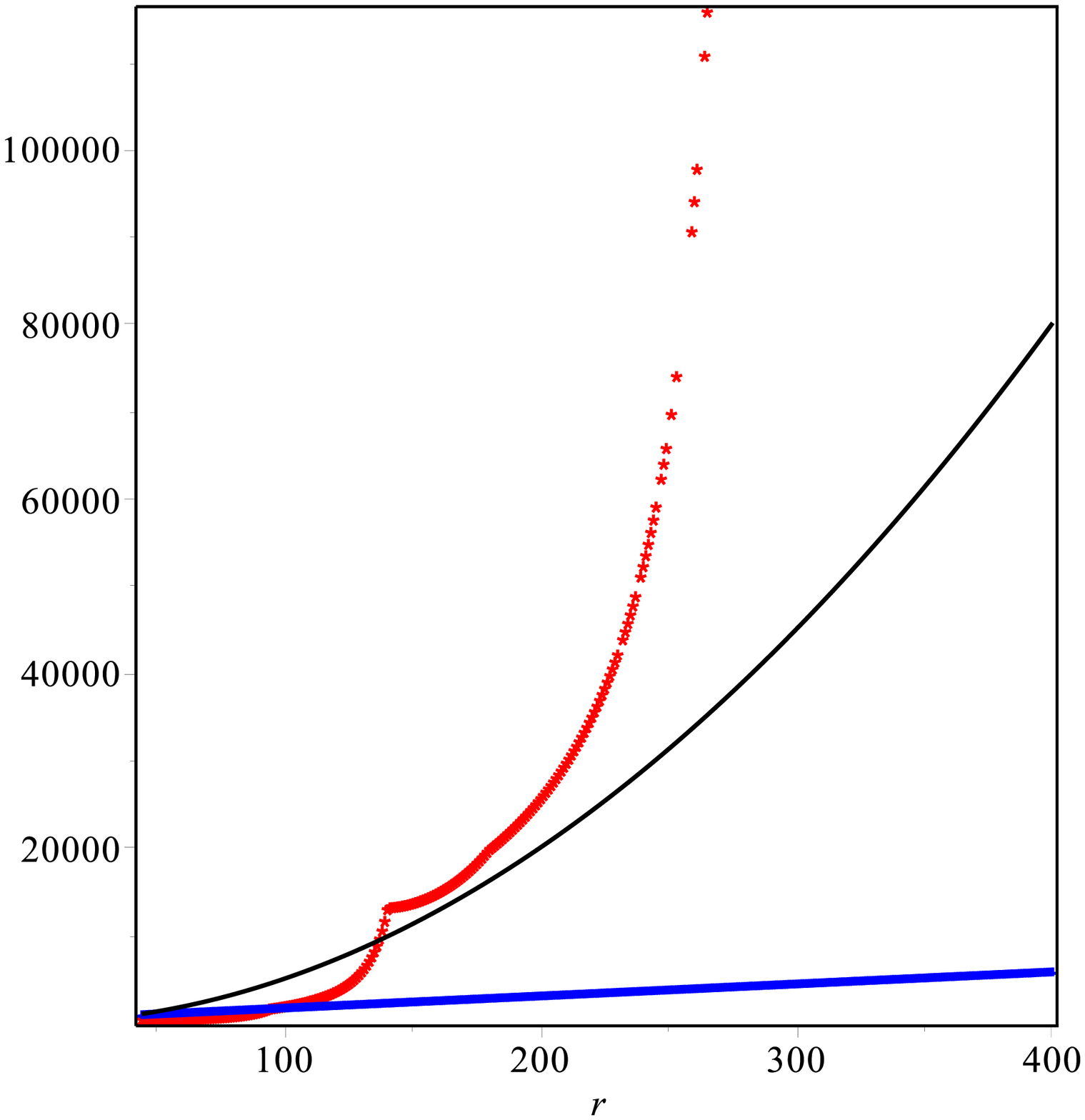}
\end{center}
{\scriptsize
\begin{center}
\textcolor{black}{--- : $\frac{r(r+1)}{2}$}\;\;
\textcolor{red}{$\star\star\star$ : Theorem \ref{sdp} }\;\;
\textcolor{blue}{{\tiny +++} : Corollary \ref{cry:rb5} }
\end{center}
}
\end{minipage}
~~
\begin{minipage}{0.5\linewidth}
\caption{\footnotesize{Comparing Example \ref{ex:rb7} and SDP bound} 
}
\label{fig:bound7}
\begin{center}
\includegraphics[height=2.6 in, width=2.9 in ]{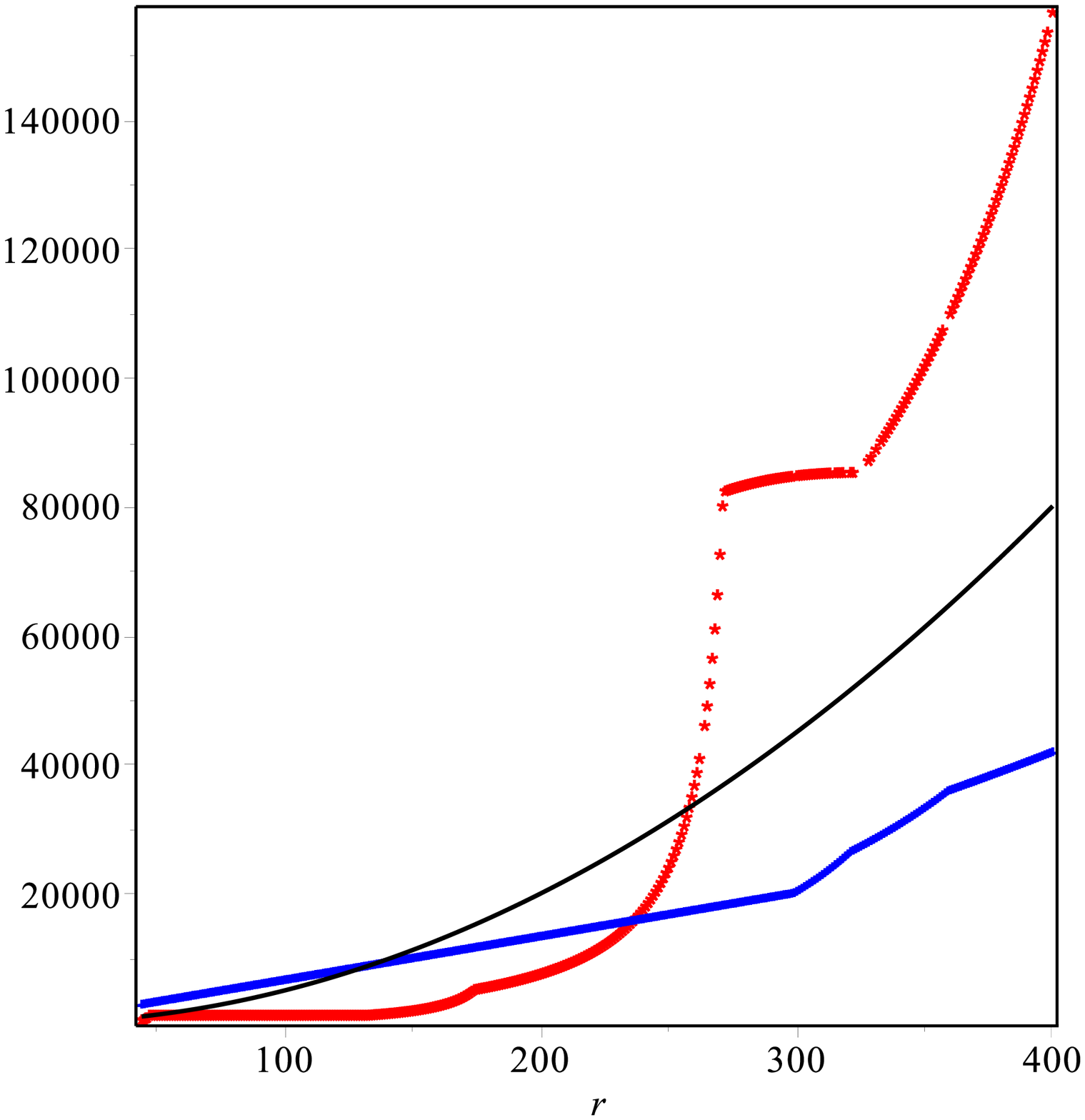}
\end{center}
{\scriptsize
\begin{center}
\textcolor{black}{--- : $\frac{r(r+1)}{2}$}\;\;
\textcolor{red}{$\star\star\star$ : Theorem \ref{sdp} }\;\;
\textcolor{blue}{{\tiny +++} : Example \ref{ex:rb7}}
\end{center}
}
\end{minipage}
\end{figure}

 In our experiments,  we solve SDP by the Matlab software {\bf CVX 3.0 beta} \citep{cvx, gb08}.  There are many  SDP solvers provided by {\bf CVX 3.0 beta} and the computational results presented in this paper are computed by {\bf sdpt3} \citep{sdpt31999, sdpt3}. The computation is carried out by a 3.20GHz Interl(R) Core(TM) i5-4460 processor under x86\_64 GNU/Linux. Our Matlab version is {\bf R2016a}. The code used for the calculations in this paper may be found at \citep[code]{link2018}. The output of {\bf sdpt3} is a floating number. We take the nearest integer of this floating number.

We see in Figures~\ref{fig:bound5}--\ref{fig:bound7} for $44\leq r\leq 400$ that
the ``red'' SDP bound is smaller when $r$ is small, it increases dramatically at some $r$, and it eventually goes beyond the Gerzon's bound.
Further, our ``blue" bound is larger when $r$ is small, but it increases slower, and when $r$ is sufficiently large, 
it always gives non-trivial upper bound 
which is much smaller than either the SDP bound or the Gerzon's bound. Some further comments are as follows. See \citep[table.pdf]{link2018} for the concrete data. 

{\bf (Comment 1).} Concerning Figure~\ref{fig:bound5}, when $44\leq r\leq 93$, the SDP bound without pillar decomposition is smaller than the bound in  Theorem~\ref{thm:rb5}. 
When $94\leq r\leq 400$, the bound in Theorem~\ref{thm:rb5} is smaller (generally much smaller) than the SDP bound. 
One can check the concrete bounds for $r\geq 94$ in \citep[Table.pdf]{link2018} and compare them
with the SDP bounds shown in the red parentheses. The SDP bounds in the red parentheses which we computed are the same as the data shown in 
\citep{BaYu14}, which is to be expected. 
As an example of Theorem~\ref{thm:rb5} outperforming SDP, the bound given by Theorem~\ref{thm:rb5} for $r=137$ is $2015$ while 
the SDP bound is $9528$. 
When $137\leq r\leq 400$, the SDP bound is greater (eventually much greater) than Gerzon's bound, which is consistent with 
the results obtained in \citep{BaYu14}. On the other hand, for any 
$44\leq r\leq 400$, the bound in Theorem~\ref{thm:rb5} is smaller than Gerzon's bound. 
For a range of larger $r$, for instance $r=266$--$400$ and some discrete $r$ such as $231$, $238$ and so on, {\bf sdpt3} failed to compute the SDP bound.  That is the reason why the red markers in Figure~\ref{fig:bound5} are not as continuous as the blue markers. 

{\bf (Comment 2).} Similarly, we note for Figure~\ref{fig:bound7} that when $44\leq r\leq 235$, the SDP bound is smaller.
When $235\leq r\leq 400$, the ``blue'' bound is smaller (generally much smaller) than the SDP bound. 
When $259\leq r\leq 400$, the SDP bound is greater  (eventually much greater) than Gerzon's bound. 
For some large $r$, {\bf sdpt3} failed to compute the SDP bound. 

In conclusion, practically, for $\alpha=1/5$ or $1/7$ and for any $r$, we can compute both upper bounds for $s_\alpha(r)$: the SDP bound and the ``blue'' bound, and then pick up the smaller one.  

An extra remark is that for $\alpha < 1/7$, experiments show that the SDP bounds are always no greater than 
the Gerzon's bound for $44\leq r\leq 400$. 
See the SDP bounds in the columns 
``$1/9$", $\ldots$, ``$1/27$"
in 
\citep[Table.pdf]{link2018}. 
A possible reason is that 
$400$ is not large enough for the SDP bound $(r\leq 400, \alpha<1/7)$ to go beyond the 
Gerzon's bound.

\section{Acknowledgments}
We are particularly indebted to Bernd Sturmfels for introducing the authors to each other. We thank Toh Kim Chuan for his nice answers to our questions about {\bf sdpt3}. We are very appreciative of Wei-Hsuan Yu and Gary Greaves, who pointed out the newest bounds on $s(17)$--$s(20)$. Finally, we thank Eva Maria Feichtner for her helpful comments on this work.  The authors were supported in part by the Zentrum f\"ur Forschungsf\"orderung der Uni Bremen Explorationsprojekt ``Hilbert Space Frames and Algebraic Geometry.''

\section*{References}

{\footnotesize
\bibliographystyle{elsart-harv}
\bibliography{etf}
}
\begin{appendices}
\section{Proof of Theorem~\ref{general} }\label{App:AA}

In Appendix~\ref{App:AA}, let $X\subset {\mathbb R}^r$ be an equiangular set with angle $\alpha$, where $1/\alpha$ is an odd number greater or equal to $3$. Let $K=\kx$. For any $K$-base $\{p_1, \ldots, p_K\}\subset X$ such that 
$\langle p_i, p_j\rangle=- \alpha$ $(\forall~1\leq i< j\leq K)$, 
let $P$ be the linear subspace of ${\mathbb R}^r$ spanned by $p_1, \ldots, p_K$ and let ${P}^{\bot}$ be the orthogonal complement of $P$ in ${\mathbb R}^r$. 
For any  $x\in X\backslash \{p_1, \ldots, p_K\}$, 
the following lemma originally comes from \cite[Section 4]{LS1973}. 
The arguments to prove the following lemma are so similar for both cases $K = (1/\alpha)+1$ and $K < (1/\alpha)+1$ that we combine them, even though $K < (1/\alpha) +1$ applies to  Appendix~\ref{App: AB}. 
  \begin{lemma}\label{lm:bound1973} 
Let $X$ be an equiangular set in $\bR^r$ with $\alpha$.  We set $K = K_\alpha(X)$.  Then for each equivalence class $\ex \subset X$ {w.r.t} any fixed $K$-base $\{p_1, \ldots, p_K\}$, 
\begin{equation*}
|\ex|~\leq~  \left\{\begin{array}{ll} r- K + 1   +  \lfloor 2\alpha\frac{r-K+1 }{1-\alpha} \rfloor, & K = (1/\alpha)+1\vspace*{1mm} \\
 s(r,\beta,\gamma), & K < (1/\alpha) +1, \enskip x \in X(K,n), \enskip n=1, \ldots, \lfloor K/2\rfloor\end{array} \right.
\end{equation*}
where $X(K, n)$ is defined as in (\ref{eq:x}), 
$\beta=(\alpha-\ell(K,n))/(1-\ell(K,n))$ and $\gamma=(-\alpha-\ell(K,n))/(1-\ell(K,n))$. 
\end{lemma}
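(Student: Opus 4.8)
The plan is to handle the two cases in the statement of Lemma~\ref{lm:bound1973} by a common geometric device: project the equivalence class $\ex$ onto the orthogonal complement $P^\bot$ of the fixed $K$-base and rescale the resulting vectors to unit length, thereby converting $\ex$ into either an equiangular set or a spherical two-distance set sitting in a space of dimension at most $r-K$ (when the $p_i$ are linearly independent) or $r-K+1$ (in the extremal case, since $\{p_1,\dots,p_K\}$ spans only a $(K-1)$-dimensional subspace). By Corollary~\ref{cry:ec} we may assume, after switching, that every vector $y\in\ex$ has the form $y=h+d$ with a single common $h\in P$ and $d\in P^\bot$, and $\langle y,p_i\rangle=\alpha\epsilon^{(i)}$ for a fixed sign vector $\epsilon$. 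Then $\langle h,h\rangle=\ell(K,n)$ (Proposition~\ref{lm:project}) and $\langle d,d\rangle=1-\ell(K,n)$, so the normalized vectors $\widehat d \triangleq d/\sqrt{1-\ell(K,n)}$ are unit vectors in $P^\bot$.

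Next I would compute the pairwise inner products of the $\widehat d$'s. For $y=h+d$ and $y'=h+d'$ distinct in $\ex$, we have $\langle y,y'\rangle=\langle h,h\rangle+\langle d,d'\rangle=\ell(K,n)+\langle d,d'\rangle$, and since $X$ is equiangular, $\langle y,y'\rangle=\pm\alpha$. Hence $\langle d,d'\rangle\in\{\alpha-\ell(K,n),\,-\alpha-\ell(K,n)\}$, so $\langle \widehat d,\widehat d'\rangle\in\{\beta,\gamma\}$ with $\beta,\gamma$ exactly as defined in the statement. This immediately shows $\{\widehat d:y\in\ex\}$ is a spherical two-distance set (or possibly a one-distance/equiangular set) in $P^\bot\cong\bR^{r-K}$, and in the case $K<(1/\alpha)+1$ this gives $|\ex|\le s(r-K,\beta,\gamma)\le s(r,\beta,\gamma)$, which is the second line of the claimed bound (monotonicity of $s(\cdot,\beta,\gamma)$ in the dimension follows from appending zero coordinates). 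One small point to verify is that the $\widehat d$ are genuinely distinct when the $y$ are, which holds because a common $h$ forces $d\ne d'$.

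For the extremal case $K=(1/\alpha)+1$, I would use the sharper structure: here $\sum_j p_j=0$ (Proposition~\ref{lm:easy}), so $P$ is $(K-1)$-dimensional and $P^\bot$ has dimension $r-K+1$; moreover $\langle h,h\rangle=\alpha$ by Proposition~\ref{lm:project}(1), so $\langle \widehat d,\widehat d'\rangle\in\{0,\,-2\alpha/(1-\alpha)\}$. Thus after projection and rescaling, $\ex$ becomes a set of unit vectors in $\bR^{r-K+1}$ in which any two are either orthogonal or have the one fixed negative inner product $-2\alpha/(1-\alpha)$; equivalently, building the Gram matrix and invoking a standard rank/eigenvalue argument (Lemma~\ref{pp:basic} applied blockwise, or the relative bound for a set with a single negative inner product), one obtains the count $|\ex|\le (r-K+1)+\lfloor 2\alpha\frac{r-K+1}{1-\alpha}\rfloor$. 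The cleanest route is: let $t=|\ex|$, group the $\widehat d$'s into orthogonality-classes, observe the Gram matrix is block diagonal with blocks of the form $(1+\tfrac{2\alpha}{1-\alpha})I-\tfrac{2\alpha}{1-\alpha}J$, force positive semidefiniteness of each block (bounding each block size) and count dimensions via $\rk G=\dim\lspan\le r-K+1$, then optimize the integer program — this yields exactly the floor expression.

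The main obstacle I anticipate is the bookkeeping in the extremal case: translating the ``orthogonal-or-fixed-negative'' structure of the projected vectors into the precise arithmetic bound $r-K+1+\lfloor 2\alpha(r-K+1)/(1-\alpha)\rfloor$ requires carefully setting up the block decomposition of the Gram matrix, bounding the number of blocks and their sizes simultaneously against the dimension constraint, and checking that the integer optimization genuinely produces the stated floor (rather than something weaker). A secondary, more routine difficulty is making sure the dimension of the ambient space is tracked correctly — $r-K$ versus $r-K+1$ — since this hinges on whether the $p_i$ are linearly independent, which is precisely where the two cases diverge. Everything else is a direct computation using Proposition~\ref{lm:project} and Corollary~\ref{cry:ec}, which are already available.
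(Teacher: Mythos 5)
Your treatment of the case $K<(1/\alpha)+1$ is essentially the paper's own proof: use Corollary~\ref{cry:ec} to write every element of $\ex$ as $h+d$ with a common $h$, subtract $\ip{h}{h}J$ from the Gram matrix, rescale by $1-\ell(K,n)>0$, and observe that the normalized projections form a spherical two-distance set with inner products $\beta$ and $\gamma$, so $|\ex|\le s(r,\beta,\gamma)$ (the paper does not even bother with the dimension drop from $r$ to $r-K$; your refinement via monotonicity is harmless).

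The genuine gap is in the extremal case $K=(1/\alpha)+1$. After projecting and rescaling you correctly obtain unit vectors in a space of dimension $r-K+1$ whose pairwise inner products lie in $\{0,-\tau\}$ with $\tau=2\alpha/(1-\alpha)$, but your next step --- grouping them into ``orthogonality-classes'' so that the Gram matrix becomes block diagonal with blocks $(1+\tau)I-\tau J$ --- is not available: the relation ``inner product equals $-\tau$'' is not transitive, so the graph of negative pairs need not be a disjoint union of complete graphs, and no such partition exists in general (if it did, the whole Lemmens--Seidel pillar analysis, which must classify connected graphs of bounded spectral radius, would be unnecessary). Consequently your integer-programming step only bounds a special subclass of configurations, not all of them. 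The correct passage from the $\{0,-\tau\}$ structure to the stated bound is exactly \cite[Theorem 4.1]{LS1973}, which the paper simply cites: writing the Gram matrix as $I-\tau A$ with $A$ the adjacency matrix of the negative graph, positive semidefiniteness gives $\lambda_{\max}(A)\le 1/\tau$, the rank bound $r-K+1$ forces the eigenvalue $1/\tau$ to have multiplicity at least $s-(r-K+1)$, Perron--Frobenius shows $1/\tau$ occurs only as the simple spectral radius of a connected component, and any connected graph with spectral radius $1/\tau$ has at least $1+1/\tau$ vertices; counting yields $s\le (r-K+1)(1+\tau)$, which is the floor expression. So either reproduce this component/spectral-radius argument or invoke \cite[Theorem 4.1]{LS1973} directly; the clique-decomposition shortcut does not prove the first line of the lemma.
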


\begin{proof}
Let $\ex=\{x_1, \ldots, x_s\}$. By Corollary \ref{cry:ec}, 
we can assume that for each $i=1, \ldots, s$, 
\[x_i=h+c_i, \;\;\;\text{where}\;\;h\in P \;
\text{and}\;c_i\in P^{\bot}.\] 
Then
\[ G(\ex)~=~\ip{h}{h} J + G(c_1, \ldots, c_s).\] 
Note the norm squared  of $c_i$ is $\langle c_i, c_i\rangle=\langle x_i, x_i\rangle -  \langle h, h\rangle = 1- \langle h, h\rangle$.
By Proposition \ref{lm:project}, if $K=(1/\alpha)+1$, then $\langle h, h\rangle=\alpha < 1$ (note $0<\alpha<1$ by Definition \ref{def:def1}), and if 
 $K<(1/\alpha)+1$, then $\langle h, h\rangle<1$ (see Formula (\ref{eq:beta})). 
So we always have $1- \langle h, h\rangle\neq 0$.  If we normalize the vectors $c_1, \ldots, c_s$ as $c'_1, \ldots, c'_s$, then the Gramian matrix of  $c'_1, \ldots, c'_s$ is
\[G(c'_1, \ldots, c'_s) ~=~\frac{G(c_1, \ldots, c_s)}{1-\ip{h}{h}}~=~\frac{G(\ex) - \ip{h}{h}J}{1-\ip{h}{h}}.\]
Obviously, this matrix has $1$'s along the diagonal. 
Note $G(\ex)$ has $\alpha$ or $-\alpha$ as the off-diagonal entries. So
$G(c'_1, \ldots, c'_s) $ has two different numbers $(\alpha-\ip{h}{h})/(1-\ip{h}{h})$ or $(-\alpha-\ip{h}{h})/(1-\ip{h}{h})$ as the off-diagonal entries.
 So $\{c'_1, \ldots, c'_s\}$ is
a spherical two-distance set  such that 
\begin{equation*}\label{sph}
\langle c'_i, c'_j \rangle~ =~
\begin{cases}
1,& i = j\\
\frac{\alpha-\ip{h}{h}}{1-\ip{h}{h}},~  \text{or}~\frac{-\alpha-\ip{h}{h}}{1-\ip{h}{h}}, & i \neq j.
\end{cases}
\end{equation*}
Thus $\overline{x} \leq s(r,\beta,\gamma)$, with $\beta =(\alpha-\ip{h}{h})/(1-\ip{h}{h})$ and $\gamma = (-\alpha-\ip{h}{h})/(1-\ip{h}{h})$.
The bounds follow from the values of $\ip{h}{h}$ in Proposition~\ref{lm:project} 
\[
\ip{h}{h}~=~
\begin{cases}
\alpha, & K = (1/\alpha)+1\\
\ell(K,n), & K < (1/\alpha)+1,~ x \in X(K,n), ~n=1, \ldots, \lfloor K/2\rfloor
\end{cases}
\]
and, for $K = (1/\alpha)+1$, since $\beta = 0$ and $\gamma=-2\alpha/(1-\alpha)$, it follows from \cite[Theorem 4.1]{LS1973} that 
$s(r,0, -2\alpha/(1-\alpha))\leq  r- K   +1 +  \lfloor 2\alpha(r-K + 1)/(1-\alpha) \rfloor$.
\end{proof}

\begin{lemma}\label{lm:project1973}
For  any $x\in X\backslash \{p_1, \ldots, p_K\}$, suppose 
\[(\langle x, p_1\rangle, ~\ldots, ~\langle x, p_K\rangle)^{\top} = \alpha\cdot \left(\epsilon^{(1)}, ~\ldots,~ \epsilon^{(K)}\right)^{\top},\;\;\;\text{where}\;\;\;\epsilon^{(i)}~=~\pm 1.\]
If $K=(1/\alpha)+1$, 
then $\epsilon^{(1)}+\cdots+\epsilon^{(K)}=0$.
\end{lemma}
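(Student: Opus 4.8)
The plan is to read off the conclusion from the linear dependence relation satisfied by a $K$-base in the extremal case. Recall that throughout Appendix~\ref{App:AA} the $K$-base $\{p_1,\ldots,p_K\}$ is chosen so that $\langle p_i,p_j\rangle = -\alpha$ for all $1\le i<j\le K$, i.e.\ $G(p_1,\ldots,p_K) = (1+\alpha)I - \alpha J$ holds \emph{exactly}, not merely up to switching equivalence. First I would invoke Proposition~\ref{lm:easy}: since $K = (1/\alpha)+1$ and the Gramian is exactly $(1+\alpha)I-\alpha J$, the vectors $p_1,\ldots,p_K$ are linearly dependent and the dependence relation is the clean one, $\sum_{j=1}^K p_j = 0$.

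Next I would simply pair $x$ against this identity. Taking the inner product of $x$ with $\sum_{j=1}^K p_j = 0$ gives
\[
0 ~=~ \Big\langle x,\ \sum_{j=1}^K p_j\Big\rangle ~=~ \sum_{j=1}^K \langle x,p_j\rangle ~=~ \alpha\sum_{j=1}^K \epsilon^{(j)}.
\]
Since $0<\alpha<1$ by Definition~\ref{def:def1}, in particular $\alpha\neq 0$, dividing by $\alpha$ yields $\epsilon^{(1)}+\cdots+\epsilon^{(K)}=0$, as claimed.

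There is essentially no hard step here; the only point requiring a moment of care is making sure one is entitled to the exact relation $\sum p_j = 0$ rather than a relation that holds only after a sign change of some $p_j$. This is exactly why the standing hypothesis of Appendix~\ref{App:AA} fixes the $K$-base with all pairwise inner products equal to $-\alpha$: under that normalization Proposition~\ref{lm:easy} delivers $\sum_{j=1}^K p_j = 0$ on the nose, and the one-line computation above goes through. (Equivalently, one can note that $K = (1/\alpha)+1$ forces $\lambda_1 = 1-(K-1)\alpha = 0$, so the all-ones vector spans the kernel of $(1+\alpha)I-\alpha J$, which by Lemma~\ref{lem:gramspan} is the dependence relation among the columns, hence among the $p_j$.)
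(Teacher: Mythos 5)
Your proof is correct and is essentially the paper's own argument: both rest on the exact relation $\sum_{j=1}^K p_j = 0$ from Proposition~\ref{lm:easy} under the standing normalization $\langle p_i,p_j\rangle=-\alpha$, and then read off $\alpha\sum_j \epsilon^{(j)}=0$. The paper phrases the pairing through the projection $h$ of $x$ onto $P$ (using $\langle h,p_j\rangle=\langle x,p_j\rangle$), while you pair $x$ directly against the zero sum, which is an equivalent and slightly more streamlined computation.
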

\begin{proof}
By Proposition~\ref{lm:easy}, $p_1+\cdots+p_K=0$ since $K=(1/\alpha) + 1$. 
 Suppose we decompose $x$ in $P$ and $P^{\bot}$  as
\[x~=~h+c, \;\;\;\text{where}\;\;h\in P \;
\text{and}\;c\in P^{\bot}.\]
Then $\langle h, p_1+\cdots+p_K\rangle=0$. On the other hand, we have
\[\langle h, p_1+\cdots+p_K\rangle~=~\langle h, p_1\rangle + \cdots + \langle h, p_K\rangle~=~\langle x, p_1\rangle + \cdots + \langle x, p_K\rangle~=~\alpha\left(\epsilon^{(1)}+\ldots+\epsilon^{(K)}\right).\]
Since $\alpha\neq 0$,  
$\epsilon^{(1)}+\ldots+\epsilon^{(K)}~=~0$.
\end{proof}

\noindent 
{\bf Proof of Theorem~\ref{general} .}
Note $K=(1/\alpha)+1$ is even since $1/\alpha$ is odd.  
By Lemma \ref{lm:project1973} and Lemma~\ref{lm:ec}, there are 
$\frac{1}{2}\binom{K}{ K/2}$ equivalence classes $\ex$ in $X$. More specifically, 
we index
all the $(1, -1)$-vectors in ${\mathbb R}^K$ with exactly $ K/2$ $1$-coordinates and the first coordinate $1$ as
\[\epsilon_{m},  \;\;m = 1, \ldots, \frac{1}{2}\binom{K}{K/2}\]
and for each $\epsilon_m$, we define
\[\ex_{m}~=~\{x\in X\backslash \{p_1, \ldots, p_K\}|(\langle x, p_1\rangle, ~\ldots, ~\langle x, p_K\rangle) ~= ~\pm \alpha\cdot \epsilon_{m}\}.\]
Then we have a partition of $X$
 \begin{equation}\label{eq:ec1973}
 X~ = ~\{p_1, \ldots, p_K\}\bigcup\bigcup\limits_{m=1}^{ \frac{1}{2}\binom{K}{K/2}}  \ex_{m}.
 \end{equation}
By the partition (\ref{eq:ec1973}) and Lemma \ref{lm:bound1973}, we have 
\[|X|~\leq~ K + \frac{1}{2}\binom{K}{K/2}\left(r -  K + 1 +  \lfloor 2\alpha\frac{r-K+1}{1-\alpha} \rfloor \right). \;\;\;\;\;\;\;\;\;\;\; \Box\]

\section{Proof of Theorem~\ref{th:s2}}\label{App: AB}
In Appendix~\ref{App: AB}, let $X\subset {\mathbb R}^r$ be an equiangular set with angle $\alpha$ and let $K$ be  $K=\kx < (1/\alpha)+1$. Again, we fix a $K$-base $\{p_1, \ldots, p_K\}\subset X$ such that 
$\langle p_i, p_j\rangle=- \alpha$ $(\forall~1\leq i< j\leq K)$.
Let $P$ be the linear subspace of ${\mathbb R}^r$ spanned by $p_1, \ldots, p_K$ and let ${P}^{\bot}$ be the orthogonal complement of $P$ in ${\mathbb R}^r$. 

We remark that the bound for $|\ex|$ given by Lemma~\ref{lm:bound1973}  is not in closed form. We could run an SDP tool to compute an upper bound for $s\left(r, \beta, \gamma \right)$ or possibly apply another result about two-distance spherical sets. However, if $1\leq n\leq K- ((1/\alpha)+1)/2$, we can modify  Lemma~\ref{lm:bound1973}  and
 derive explicit bounds, see Lemmas \ref{lmbound2}--\ref{lmbound1} below.


\begin{lemma}\label{lmbound2}
If $1<n< K- ((1/\alpha)+1)/2$, then for any $x\in X(K,n)$, $|\ex|~\leq~  r+1$.
\end{lemma}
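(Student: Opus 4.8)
The statement concerns an equivalence class $\overline{x} \subset X(K,n)$ with $1 < n < K - ((1/\alpha)+1)/2$, and I want to show $|\overline{x}| \leq r+1$. By Corollary~\ref{cry:ec}, after switching I may assume every vector $x_i \in \overline{x}$ has the same projection $h$ onto $P = \operatorname{span}(p_1,\dots,p_K)$, so $x_i = h + c_i$ with $c_i \in P^\bot$ and $\langle x_i, p_j\rangle = \alpha \epsilon^{(j)}$ for a fixed sign vector $\epsilon$ having exactly $n$ positive entries. The key numerical input is the middle line of~\eqref{eq:beta}: since $1 < n < K - ((1/\alpha)+1)/2$, we are in the regime $\alpha < \ell(K,n) < 1$, i.e. $\langle h,h\rangle > \alpha$. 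As in the proof of Lemma~\ref{lm:bound1973}, the normalized vectors $c_i' = c_i/\sqrt{1-\langle h,h\rangle}$ form a spherical two-distance set with inner products $\beta = (\alpha - \ell(K,n))/(1-\ell(K,n))$ and $\gamma = (-\alpha-\ell(K,n))/(1-\ell(K,n))$. The crucial observation is that $\ell(K,n) > \alpha$ forces $\beta < 0$; and of course $\gamma < 0$ too. So $\{c_1',\dots,c_s'\}$ is a set of unit vectors in $\mathbb{R}^r$ (indeed lying in the $(r-K)$-dimensional subspace $P^\bot$, but $\mathbb{R}^r$ suffices) all of whose pairwise inner products are \emph{negative}.

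\textbf{Key step: bounding a set with only negative inner products.} I would invoke the standard linear-algebra fact that a set of unit vectors in $\mathbb{R}^d$ with pairwise negative inner products has cardinality at most $d+1$. The quick argument: suppose $u_1,\dots,u_m \in \mathbb{R}^d$ are unit with $\langle u_i,u_j\rangle < 0$ for $i\neq j$, and suppose $\sum_i \lambda_i u_i = 0$ is a nontrivial linear dependence with $m > d+1$ (so a dependence exists among any $d+1$ of them); split the index set into $I^+ = \{i : \lambda_i > 0\}$ and $I^- = \{i : \lambda_i \le 0\}$, set $v = \sum_{i \in I^+}\lambda_i u_i = \sum_{i\in I^-}(-\lambda_i) u_i$, and compute $\langle v, v\rangle = \sum_{i\in I^+, j\in I^-}\lambda_i(-\lambda_j)\langle u_i,u_j\rangle \le 0$, forcing $v = 0$; but then the subfamily $\{u_i : i \in I^+\}$ is itself linearly dependent with all-positive coefficients, which is impossible for vectors with negative pairwise inner products (pairing the relation with any fixed $u_i$ in it gives a contradiction of signs) unless $I^+ = \emptyset$, and symmetrically $I^- = \emptyset$. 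Hence the dependence is trivial, contradiction; so $m \le d+1$. Applying this with $d = r$ (or even $d = r - K$, which would be slightly sharper) gives $|\overline{x}| = s \le r+1$.

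\textbf{Where the real work is.} The only genuinely nontrivial point is verifying that we are in the right sign regime, i.e. that $1 < n < K - ((1/\alpha)+1)/2$ really does put us in the $\ell(K,n) > \alpha$ branch of~\eqref{eq:beta} and hence makes $\beta$ (and $\gamma$) strictly negative — but this is exactly what Proposition~\ref{lm:project} part~(2) already records, so it is immediate given the earlier results. Everything else is the reduction via Corollary~\ref{cry:ec} (already done in Lemma~\ref{lm:bound1973}) plus the classical negative-inner-product bound. The one subtlety to be careful about is the boundary: the hypothesis $n > 1$ is needed to exclude the degenerate $n=1$ case handled separately in Theorem~\ref{th:s2}(1), and $n < K - ((1/\alpha)+1)/2$ (strict) is needed so that $\ell(K,n) \ne \alpha$, i.e. $\beta \ne 0$ — if $\beta = 0$ the vectors need not have all-negative inner products and one only gets the weaker $s(r,0,\gamma)$ bound. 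So I would state explicitly: strictness of both inequalities is used.

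\textbf{Summary of steps, in order.} First, reduce to the common-projection form $x_i = h + c_i$ via Corollary~\ref{cry:ec}. Second, quote $\langle h,h\rangle = \ell(K,n)$ from Proposition~\ref{lm:project}(2) and note that the hypothesis on $n$ places us in the regime $\alpha < \ell(K,n) < 1$. Third, normalize the $c_i$ and compute that their pairwise inner products $\beta,\gamma$ are both strictly negative. Fourth, apply the lemma that $m$ unit vectors in $\mathbb{R}^r$ with pairwise negative inner products satisfy $m \le r+1$, and conclude $|\overline{x}| \le r+1$. I expect step three (the sign check) to be the "main obstacle" only in the bookkeeping sense; mathematically it is already contained in~\eqref{eq:beta}.
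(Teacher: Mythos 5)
Your reduction is exactly the paper's: via Corollary~\ref{cry:ec} and the computation in Lemma~\ref{lm:bound1973}, the class $\overline{x}$ becomes a spherical two-distance set with inner products $\beta=(\alpha-\ell(K,n))/(1-\ell(K,n))$ and $\gamma=(-\alpha-\ell(K,n))/(1-\ell(K,n))$, and the hypothesis $n<K-((1/\alpha)+1)/2$ together with~\eqref{eq:beta} puts you in the regime $\alpha<\ell(K,n)<1$, so both $\beta$ and $\gamma$ are strictly negative. At that point the paper simply cites \cite[Theorem 3.2, last case]{BaYu13} to get $s(r,\beta,\gamma)\leq r+1$, whereas you prove the underlying classical fact (at most $d+1$ unit vectors in $\bR^d$ can have pairwise negative inner products) from scratch; that is a legitimate, more self-contained route, and your remark that one could even work in $P^{\bot}$ to get $r-K+1$ is correct.

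However, your proof of that classical fact has a genuine flaw in its final step. After forcing $v=\sum_{i\in I^+}\lambda_i u_i=0$, you assert that a vanishing combination with all-positive coefficients is ``impossible for vectors with negative pairwise inner products,'' because pairing with any $u_k$ in the subfamily gives a sign contradiction. It does not: pairing with $u_k$, $k\in I^+$, yields $\lambda_k+\sum_{i\in I^+\setminus\{k\}}\lambda_i\langle u_i,u_k\rangle=0$, whose first term is positive and whose remaining terms are negative, so the signs are mixed and nothing is contradicted. Indeed the assertion is false: the $d+1$ vertices of a regular simplex centered at the origin are unit vectors with pairwise inner products $-1/d<0$ satisfying $\sum_i u_i=0$ with all coefficients equal to $1$ (this is precisely the $K=(1/\alpha)+1$ configuration of Proposition~\ref{lm:easy}), so linear dependences with positive coefficients do occur and your conclusion ``hence the dependence is trivial'' cannot be the right endpoint. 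The standard repair uses $m\geq d+2$ where you only mention it parenthetically: take a nontrivial dependence among just $d+1$ of the vectors, deduce $v=0$ as you did (negating the $\lambda_i$ if necessary so that $I^+\neq\emptyset$), and then pair $\sum_{i\in I^+}\lambda_i u_i=0$ with a vector $u_j$ \emph{not} occurring in that dependence, which exists precisely because $m\geq d+2$; this gives $0=\sum_{i\in I^+}\lambda_i\langle u_i,u_j\rangle<0$, the desired contradiction, hence $m\leq d+1$ and $|\overline{x}|\leq r+1$. With that local fix your argument is complete and agrees with the paper's except that you prove, rather than cite, the final bound on two-distance sets with two negative inner products.
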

\begin{proof}
By Proposition~\ref{lm:project} (\ref{eq:beta}), if $n< K- ((1/\alpha)+1)/2$, then $\alpha<\ell(K,n)<1$. So 
both $\beta= (\alpha-\ell(K,n))/(1-\ell(K,n))$ and $\gamma = (-\alpha-\ell(K,n))/(1-\ell(K,n))$ are negative. 
By Lemma~\ref{lm:bound1973}   and \cite[Theorem 3.2, last case]{BaYu13}, 
$|\ex|~\leq~ s\left(r, \beta, \gamma \right) ~\leq~ r+1$.
\end{proof}

\begin{lemma}\label{lmbound3}
If $n= K-  ((1/\alpha)+1)/2$, then for any $x\in X(K,n)$,
$|\ex|~\leq~   r- K   +  \lfloor 2\alpha\frac{r-K }{1-\alpha} \rfloor$.
\end{lemma}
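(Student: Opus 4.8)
\textbf{Proof proposal for Lemma~\ref{lmbound3}.}
The plan is to repeat the reduction used in the proof of Lemma~\ref{lm:bound1973}, but to exploit the fact that in the borderline case $n = K - ((1/\alpha)+1)/2$ we have $\ell(K,n) = \alpha$ exactly, by Proposition~\ref{lm:project}~(\ref{eq:beta}). As in Lemma~\ref{lm:bound1973}, fix an equivalence class $\ex = \{x_1, \ldots, x_s\} \subset X(K,n)$ and, using Corollary~\ref{cry:ec}, assume after a switching that each $x_i = h + c_i$ with a common $h \in P$ and $c_i \in P^{\bot}$. Then $\langle c_i, c_i\rangle = 1 - \langle h, h\rangle = 1 - \alpha \neq 0$, so the normalized vectors $c'_i = c_i/\sqrt{1-\alpha}$ form a set of unit vectors whose Gramian has off-diagonal entries
\[
\frac{\alpha - \langle h,h\rangle}{1 - \langle h,h\rangle} ~=~ 0 \qquad\text{or}\qquad \frac{-\alpha - \langle h,h\rangle}{1-\langle h,h\rangle} ~=~ \frac{-2\alpha}{1-\alpha}.
\]
Thus $\{c'_1, \ldots, c'_s\}$ lies in $P^{\bot} \cong \bR^{r-K}$ and is a spherical two-distance set with inner products $0$ and $-2\alpha/(1-\alpha)$; equivalently, after discarding the zero inner product, it is a set of vectors that is ``equiangular-like'' with the single nonzero inner product $-2\alpha/(1-\alpha)$.

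The key step is then to invoke \cite[Theorem 4.1]{LS1973} in dimension $r - K$ (rather than $r-K+1$, which was the relevant dimension in the $K = (1/\alpha)+1$ case of Lemma~\ref{lm:bound1973}): a set of unit vectors in $\bR^{r-K}$ whose pairwise inner products lie in $\{0, -2\alpha/(1-\alpha)\}$ has cardinality at most
\[
(r-K) + \left\lfloor 2\alpha\,\frac{r-K}{1-\alpha} \right\rfloor.
\]
Applying this to $\{c'_1,\ldots,c'_s\}$ gives $|\ex| = s \leq (r-K) + \lfloor 2\alpha(r-K)/(1-\alpha)\rfloor$, which is exactly the claimed bound. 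I would want to double-check the precise hypotheses of \cite[Theorem 4.1]{LS1973}: that result is stated for the two-distance value set $\{0, \rho\}$ with $\rho < 0$, and one should confirm it applies verbatim with $\rho = -2\alpha/(1-\alpha)$ and ambient dimension $r-K$ (the $c'_i$ genuinely live in the $(r-K)$-dimensional space $P^{\bot}$, since $P$ is $K$-dimensional as $K < (1/\alpha)+1$ forces $p_1,\ldots,p_K$ to be linearly independent by Proposition~\ref{lm:easy}).

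The main obstacle I anticipate is purely bookkeeping: making sure the dimension count is $r-K$ and not $r-K+1$. In the $K=(1/\alpha)+1$ situation of Lemma~\ref{lm:bound1973}, the vectors $p_1,\ldots,p_K$ span only a $(K-1)$-dimensional subspace (they satisfy $\sum p_j = 0$), so the orthogonal complement has dimension $r - K + 1$; here, with $K < (1/\alpha)+1$, the $p_j$ are linearly independent, so $\dim P = K$ and $\dim P^{\bot} = r - K$. This off-by-one is the whole source of the difference between the bound in part~(3) of Theorem~\ref{th:s2} and the bound in Theorem~\ref{general}, and it is the one place where a careless argument would give the wrong constant. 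Everything else is the same normalization computation already carried out in Lemma~\ref{lm:bound1973}.
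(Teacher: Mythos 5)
Your proposal is correct and follows essentially the same route as the paper: observe that $\ell(K,n)=\alpha$ in this borderline case via Proposition~\ref{lm:project}~(\ref{eq:beta}), so the normalized orthogonal components have inner products $0$ or $-2\alpha/(1-\alpha)$, and then apply \cite[Theorem 4.1]{LS1973} in the $(r-K)$-dimensional space $P^{\bot}$. Your explicit check that $\dim P^{\bot}=r-K$ (because the $K$-base is linearly independent when $K<(1/\alpha)+1$, unlike the $K=(1/\alpha)+1$ case) is exactly the bookkeeping the paper leaves implicit, and it is handled correctly.
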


\begin{proof}
By Proposition~\ref{lm:project} (\ref{eq:beta}), if $n= K- \ ((1/\alpha)+1)/2$, then $\ell(K,n)=\alpha$. So $\beta= (\alpha-\ell(K,n))/(1-\ell(K,n))=0$ and
$\gamma= (-\alpha-\ell(K,n))/(1-\ell(K,n))=-2\alpha/(1-\alpha)$.  By \cite[Theorem 4.1]{LS1973}, 
$|\ex|\leq  r- K   +  \lfloor 2\alpha (r-K)/(1-\alpha) \rfloor$. 
\end{proof}

\begin{lemma}\label{lmbound1}
For any $x\in X(K,1)$, 
\begin{equation*}\label{bound1}
|\ex|~\leq~ 
\begin{cases}
r-K,  & 1\geq K - \frac{(1/\alpha)+1}{2} 
\\
\frac{1-\alpha}{\ell(K,1)-\alpha},
& 1<K- \frac{(1/\alpha)+1}{2},
\end{cases}
\;\;\;\;\;\text{where}\;\;0<\alpha<1.
\end{equation*}
\end{lemma}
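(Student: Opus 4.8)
The plan is to refine the argument of Lemma~\ref{lm:bound1973} in the special case $n=1$, where the projection $h$ has a particularly simple structure, and to exploit the two different regimes for $\ell(K,1)$ dictated by Proposition~\ref{lm:project}~\eqref{eq:beta}. First I would fix an equivalence class $\ex = \{x_1,\ldots,x_s\} \subset X(K,1)$, and by Corollary~\ref{cry:ec} assume after switching that each $x_i = h + c_i$ with $h \in P$, $c_i \in P^{\bot}$, and $(\langle x_i, p_1\rangle,\ldots,\langle x_i, p_K\rangle)^{\top} = \alpha\,\epsilon$ for a fixed $(1,-1)$-vector $\epsilon$ having exactly one positive coordinate (say $\epsilon^{(1)} = 1$, $\epsilon^{(j)} = -1$ for $j\geq 2$). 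As in the proof of Lemma~\ref{lm:bound1973}, $G(\ex) = \langle h,h\rangle J + G(c_1,\ldots,c_s)$, and normalizing the $c_i$ produces a spherical two-distance set $\{c'_1,\ldots,c'_s\}$ in $P^{\bot} \cong \bR^{r-K}$ with inner products $\beta = (\alpha - \ell(K,1))/(1-\ell(K,1))$ and $\gamma = (-\alpha - \ell(K,1))/(1-\ell(K,1))$, where $\ell(K,1)$ is the value from~\eqref{eq:l}.

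Next I would split into the two cases according to~\eqref{eq:beta}. If $1 \geq K - \tfrac{(1/\alpha)+1}{2}$, then $\ell(K,1) \leq \alpha$ (with equality only at the boundary), so $\beta = (\alpha - \ell(K,1))/(1-\ell(K,1)) \geq 0$ while $\gamma < 0$. The key point is that $c'_1, \ldots, c'_s$ lie in the $(r-K)$-dimensional space $P^{\bot}$, and one inner product is nonnegative. Here I would invoke the classical fact (the linear-algebra bound behind \cite[Theorem 4.1]{LS1973}, or directly the relative bound for two-distance sets with a nonnegative inner product) that such a set has size at most $\dim P^{\bot} = r - K$; indeed, if $\beta \geq 0$ then the Gram matrix $G(c'_1,\ldots,c'_s) = (1-\beta)I + \text{(something)}$ has a structure forcing the vectors associated with the $\beta$-blocks to be linearly independent, and a counting over sign patterns gives $s \leq r-K$. (In the borderline subcase $\ell(K,1) = \alpha$, i.e.\ $1 = K - \tfrac{(1/\alpha)+1}{2}$, we have $\beta = 0$ and $\gamma = -2\alpha/(1-\alpha)$, and \cite[Theorem 4.1]{LS1973} gives the even sharper $r - K + \lfloor 2\alpha(r-K)/(1-\alpha)\rfloor$, which is $\geq r-K$, so the stated bound $r-K$ still holds.)

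In the second case, $1 < K - \tfrac{(1/\alpha)+1}{2}$, so $\alpha < \ell(K,1) < 1$ and hence $\beta < 0$, $\gamma < 0$: the set $\{c'_1,\ldots,c'_s\}$ is a spherical two-distance set with \emph{both} inner products negative. For such a set, a standard positive-semidefiniteness argument gives a bound depending only on the inner products, not on the dimension: writing $M = G(c'_1,\ldots,c'_s)$, one considers $\mathbf{1}^{\top} M \mathbf{1} \geq 0$, which yields $s + (\text{number of }\beta\text{-pairs})\,2\beta + (\text{number of }\gamma\text{-pairs})\,2\gamma \geq 0$; combined with the fact that within the class all vectors are equiangular (so each off-diagonal entry is $\beta$ or $\gamma$ according to a fixed $\pm$ pattern inherited from $X$), and using $\gamma < \beta < 0$, this collapses to $s(1-\beta) \leq 1 - \text{(something)} \leq 1 - \gamma$... more cleanly, I would observe that the off-diagonal entries of $M$ are all $\beta$ or $\gamma$ with $\gamma < \beta < 0$, so replacing every $\gamma$ by $\beta$ only decreases $\mathbf{1}^{\top} M \mathbf{1}$; since the all-$\beta$-off-diagonal matrix $(1-\beta)I + \beta J$ has smallest eigenvalue $1-\beta > 0$ and the relevant bound comes from requiring $1 + (s-1)\beta \geq$ the contribution, one extracts $s \leq \frac{1-\beta}{\beta - \ell(K,1)\text{-terms}}$; unwinding $\beta = (\alpha-\ell(K,1))/(1-\ell(K,1))$ gives exactly $|\ex| \leq \frac{1-\alpha}{\ell(K,1)-\alpha}$. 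I expect the main obstacle to be pinning down precisely which two-distance-set inequality to cite for each case and verifying that the algebraic simplification of $\beta,\gamma$ in terms of $\ell(K,1)$ and $\alpha$ produces the clean closed forms stated; the linear-algebra content itself is routine once the correct inequality is identified.
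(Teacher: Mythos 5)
Your setup (project onto $P^{\bot}$, normalize the $c_i$, view $\ex$ as a two-distance set with inner products $\beta=(\alpha-\ell(K,1))/(1-\ell(K,1))$, $\gamma=(-\alpha-\ell(K,1))/(1-\ell(K,1))$) is fine, and your second case essentially works: since $\gamma<\beta<0$, the inequality $0\leq \bigl\|\sum_i c_i'\bigr\|^2 \leq s+s(s-1)\beta$ gives $s\leq 1-1/\beta=(1-\alpha)/(\ell(K,1)-\alpha)$ (note your phrase ``replacing every $\gamma$ by $\beta$ only decreases $\mathbf{1}^{\top}M\mathbf{1}$'' has the direction backwards --- the replacement increases it, which is exactly what you need). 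But the first case has a genuine gap. The ``classical fact'' you invoke --- that a spherical two-distance set in dimension $r-K$ with $\beta\geq 0$ and $\gamma<0$ has at most $r-K$ elements --- is false in general (equiangular sets themselves, with $\beta=\alpha$, $\gamma=-\alpha$, can have order $d^2$ elements in dimension $d$), and your borderline subcase argument reverses the logic: citing \cite[Theorem 4.1]{LS1973} yields the \emph{larger} bound $r-K+\lfloor 2\alpha(r-K)/(1-\alpha)\rfloor$, which does not establish the \emph{smaller} claimed bound $r-K$.

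The missing idea, which is the heart of the paper's proof, is a maximality argument on the negative clique: every $x\in\ex\subset X(K,1)$ has inner product $-\alpha$ with the same $K-1$ base vectors, say $p_1,\ldots,p_{K-1}$; if two vectors $x,\tilde{x}\in\ex$ had $\langle x,\tilde{x}\rangle=-\alpha$, then $\{p_1,\ldots,p_{K-1},x,\tilde{x}\}$ would be a negative clique of size $K+1$, contradicting $K=\kx$. Hence \emph{all} pairwise inner products inside $\ex$ equal $+\alpha$, so $G(\ex)=(1-\alpha)I+\alpha J$ exactly and $G(c_1,\ldots,c_s)=(1-\alpha)I-(\ell(K,1)-\alpha)J$, whose eigenvalues are $1-\alpha$ and $1-\ell(K,1)+(s-1)(\alpha-\ell(K,1))$. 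When $1\geq K-\frac{(1/\alpha)+1}{2}$ one has $\ell(K,1)\leq\alpha$, so both eigenvalues are strictly positive, the Gramian is nonsingular, and Lemma~\ref{lem:gramspan} forces $c_1,\ldots,c_s$ to be linearly independent inside $P^{\bot}$, giving $s\leq\dim P^{\bot}=r-K$ (including the borderline $\ell(K,1)=\alpha$). When $1<K-\frac{(1/\alpha)+1}{2}$, positive semidefiniteness of the first eigenvalue gives $s\leq(1-\alpha)/(\ell(K,1)-\alpha)$, recovering your case-two bound. Without the clique observation the Gram matrix of $\ex$ is not pinned down, and no purely two-distance-set citation of the kind you sketch will produce the $r-K$ bound.
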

\begin{proof}
Let $\ex=\{x_1, \ldots, x_s\}$. 
By Corollary \ref{cry:ec}, we can assume that for each $i=1, \ldots, s$, 
\[x_i=h+c_i, \;\;\;\text{where}\;\;h\in P \;
\text{and}\;c_i\in P^{\bot},\]
and for all $x \in \ex\subset X(K, 1)$, there exist $K-1$ vectors among $p_1, \ldots, p_K$, say $p_1, \ldots, p_{K-1}$, such that for any $i=1, \ldots, K-1$, $\ip{p_i}{x} = -\alpha$.

Assume there exist $x, \tilde{x}\in\ex\subset X(K,1)$ such that $\langle  x, \tilde{x}\rangle=-\alpha$. 
 Then the $K+1$ vectors $p_1, \ldots, p_{K-1}, x, \tilde{x}\in X$ have pairwise inner product  $-\alpha$ and hence 
$\kx\geq K+1$, which contradicts to the hypothesis that $\kx=K$. So we must have $\langle  x, \tilde{x}\rangle=\alpha$ for any $x, \tilde{x}\in\ex$.

By the above argument, we have $G(\ex)=(1-\alpha)I + \alpha J$.  By Proposition \ref{lm:project}, $\langle  h, h\rangle=\ell(K, 1)$.
Thus,  
\begin{equation}\label{eqbound1}
G(c_1, \ldots, c_s)~=~G(\ex) - \ell(K,1) \cdot J~=~  \left(1 - \alpha \right)I - \left(\ell(K,1)-\alpha\right)J .
\end{equation}
By Lemma \ref{pp:basic}, the above matrix has two eigenvalues 
\begin{equation}\label{eq:lmbound1}
\lambda_1~=~1-\ell(K,1) + \left(s-1\right)\left(\alpha-\ell(K,1)\right),\;\;\lambda_2~=~1-\alpha.
\end{equation}
For the first case $1\geq K- ((1/\alpha)+1)/2$, 
by Proposition \ref{lm:project} (\ref{eq:beta}),  we have $\ell(K,1)\leq \alpha<1$ in this case. Hence,  it is seen from (\ref{eq:lmbound1}) that  
both eigenvalues $\lambda_1$ and $\lambda_2$ are strictly positive. 
So the matrix $G(\ex) -  \ell(K,1) \cdot J$ is full rank and the rank is $s$. 
Note  $c_1, \ldots, c_s\in P^{\bot}$ and $P^{\bot}$ has dimension $r-K$.   Thus, by Lemma~\ref{lem:gramspan}, we have $s\leq r-K$.

Now we discuss the second case $1<K-  ((1/\alpha)+1)/2$. 
Note $G(c_1, \ldots, c_s)$ is positive semidefinite and hence all its eigenvalues should be non-negative. By checking $\lambda_1$,  we have 
\[\lambda_1~=~1-\ell(K,1) + \left(s-1\right)\left(\alpha-\ell(K,1)\right)\geq 0 \Longleftrightarrow s\leq  \frac{1-\alpha}{\ell(K,1)-\alpha}.\] 
(We remark that by Proposition \ref{lm:project} (\ref{eq:beta}), 
if $1<K- ((1/\alpha)+1)/2$,  then $ \alpha<\ell(K,1)<1$. So the above bound for $s$ is non-trivial.)
\end{proof}

\section{\bf Proof of Lemma \ref{lm:rb4}}\label{app:AC}
The goal of this subsection is to prove Lemma \ref{lm:rb4}. Suppose we have an equiangular set $X$ in ${\mathbb R}^r$ with angle $1/5$ and 
we have $K=\km(X)=4$. Fix a $K$-base $\{p_1, p_2, p_3, p_4\}$. 
Define $X(K, n)$ as in (\ref{eq:x}). 
Then the partition (\ref{eq:decompose}) becomes 
\begin{align}\label{eq:dec4}
X
~=~\{p_1, p_2, p_3, p_4\} \bigcup X(4,1) \bigcup X(4,2).
\end{align}
By Theorem \ref{thm:ec}, there are $\binom{4}{1}=4$ equivalence classes in $X(4,1)$, say $\ex_m$ $(m=1, 2, 3, 4)$, with
\[|X(4,1)| = \sum_{m=1}^4|\ex_m|.\]
and $\frac{1}{2}\binom{4}{2}=3$ equivalence classes in $X(4,2)$, say $\ex_m$ $(m=5, 6, 7)$ with 
\[|X(4,2)| = \sum_{m=5}^7|\ex_m|.\]
So by the partition in (\ref{eq:dec4}), 
we have
\begin{align}\label{eq:newdec4}
|X|~=~4 + |X(4,1)| + |X(4,2)|~=~4+\sum_{m=1}^4|\ex_m| + \sum_{m=5}^7|\ex_m|.
\end{align}
Below, we discuss the upper bound for each $|\ex_m|$ in Lemmas~\ref{lm:rb41} and~\ref{lm:rb42}. After that, we give the proof of 
Lemma \ref{lm:rb4}. 

\begin{lemma}\label{lm:forrb41}
If a real $(s+1)\times (s+1)$-matrix ($s>2$)
\[G ~= ~\left(\begin{array}{cccc}
a &  \hdots & 0 & i_1\\  
\vdots & \ddots& \vdots & \vdots \\ 
0 & \hdots & a   & i_s\\
i_1&  \hdots &i_s & b
\end{array}\right) 
\]
is positive semidefinite, then  $\sum_{k=1}^s i_k^2\leq ab$. 
\end{lemma}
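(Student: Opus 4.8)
The plan is to use the positive semidefiniteness of $G$ directly by testing it against a cleverly chosen vector, rather than expanding determinants. Write $G$ in block form as
\[
G ~=~ \begin{pmatrix} aI_s & v \\ v^\top & b \end{pmatrix},
\qquad v ~=~ (i_1, \ldots, i_s)^\top.
\]
Since $G$ is positive semidefinite, for every $w \in \bR^s$ and every $t \in \bR$ we have $(w^\top, t)\, G\, (w^\top, t)^\top \geq 0$, that is,
\[
a\, w^\top w ~+~ 2t\, v^\top w ~+~ b\, t^2 ~\geq~ 0.
\]
First I would observe that taking $t = 0$ and $w$ arbitrary forces $a \geq 0$, and taking $w = 0$ forces $b \geq 0$. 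If $a = 0$, then positive semidefiniteness of the $2\times 2$ principal submatrix indexed by $\{k, s+1\}$ gives $i_k^2 \leq a b = 0$ for each $k$, so $\sum i_k^2 = 0 \leq ab$ and we are done; hence assume $a > 0$.

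The main step is then to choose $w = v$ (equivalently, complete the square / use the Schur complement). Plugging $w = v$ into the displayed inequality gives $a\, v^\top v + 2t\, v^\top v + b\, t^2 \geq 0$ for all $t$. Specializing to $t = -a$ yields
\[
a\, v^\top v ~-~ 2a\, v^\top v ~+~ a^2 b ~\geq~ 0,
\quad\text{i.e.}\quad a\big(ab - v^\top v\big) ~\geq~ 0,
\]
and since $a > 0$ this gives $v^\top v = \sum_{k=1}^s i_k^2 \leq ab$, as claimed. (Equivalently, one can phrase this as: the Schur complement of the invertible block $aI_s$ is the $1\times 1$ matrix $b - v^\top (aI_s)^{-1} v = b - \tfrac{1}{a}\sum i_k^2$, which must be $\geq 0$.)

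I do not expect any real obstacle here; the only thing to be careful about is the degenerate case $a = 0$ (handled above via a $2\times 2$ principal minor) so that the Schur-complement/division-by-$a$ argument is legitimate. The hypothesis $s > 2$ is not actually needed for this inequality — it is presumably there because the lemma will be applied in a context where the Gram matrix has that shape — so I would simply not use it. I would present the Schur-complement version as the clean writeup, with the $a=0$ case dispatched in one line beforehand.
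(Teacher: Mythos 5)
Your proof is correct, but it takes a different route from the paper. The paper computes the characteristic polynomial of $G$ explicitly, obtaining $\left(\lambda-a\right)^{s-2}\left(\lambda^2-(a+b)\lambda+ab-\sum_{k=1}^s i_k^2\right)$, and then argues that the product of the two roots of the quadratic factor, which equals $ab-\sum_k i_k^2$, must be non-negative because all eigenvalues of a positive semidefinite matrix are non-negative; this is why the hypothesis $s>2$ appears (it guarantees the stated factorization, and in the application the case of small $s$ is dispatched trivially before invoking the lemma). Your argument instead tests the quadratic form on the vector $(v^\top,-a)^\top$ with $v=(i_1,\ldots,i_s)^\top$, i.e.\ uses the Schur complement $b-\frac{1}{a}\sum_k i_k^2\geq 0$ of the block $aI_s$, with the degenerate case $a=0$ handled by a $2\times 2$ principal minor. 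Your version avoids the characteristic-polynomial computation, needs no restriction on $s$, and makes the $a=0$ case explicit; the paper's version is equally elementary but is tied to the specific arrow-shaped matrix, and in its intended application $a=4/5>0$, so the degenerate case never arises. Both are complete and correct proofs of the stated inequality.
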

\begin{proof}
It is straightforward to compute that the characteristic polynomial of the matrix $G$ is
\[\left(\lambda-a\right)^{s-2}\left(\lambda^2 - (a+b)\lambda + ab - \sum_{k=1}^si_k^2\right).\] 
Because $G$ is positive semidefinite, we have all non-negative eigenvalues. Hence, 
the product of last two eigenvalues should be non-negative. That means 
$ab - \sum_{k=1}^si_k^2 ~\geq~0$.
\end{proof}
\begin{lemma}\label{lm:rb41}
If $x\in X(4,1)$ and there exists another vector $y \in X$ such that $y \not\in \ex$, 
then we have the statements below. 
\begin{itemize}
\item[{\em (1)}]If $y\in X(4,1)$, then $|\ex|\leq 36$.
\item[{\em (2)}]If $y\in X(4,2)$, then $|\ex|\leq 39$.
\end{itemize}
\end{lemma}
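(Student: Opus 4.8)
The statement to prove is Lemma~\ref{lm:rb41}, which bounds $|\ex|$ for an equivalence class $\ex \subset X(4,1)$ when there exists a vector $y \in X$ outside $\ex$ of a prescribed type. Here is the plan.

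\medskip
\noindent\textbf{Setup.} Let $\ex = \{x_1, \ldots, x_s\} \subset X(4,1)$. By Corollary~\ref{cry:ec}, after switching equivalence we may write $x_i = h + c_i$ with $h \in P$, $c_i \in P^\bot$, and the inner products $\langle x_i, p_j\rangle$ are the same fixed sign pattern $\alpha \cdot \epsilon$ for all $i$; since $x_i \in X(4,1)$, exactly one coordinate of $\epsilon$ is $+1$. As in the proof of Lemma~\ref{lmbound1}, the ``negative clique plus $K$-base'' argument forces $\langle x_i, x_j\rangle = \alpha$ for all $i \neq j$ (otherwise $p_1, p_2, p_3$ together with $x_i, x_j$ would be a negative clique of size $5 > 4 = \km(X)$), so $G(\ex) = (1-\alpha)I + \alpha J$ and $\langle h, h\rangle = \ell(4,1)$. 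With $\alpha = 1/5$, one computes $\ell(4,1) = \alpha^2 \frac{4\alpha(1-4) + (1+\alpha)\cdot 4}{(1+\alpha)(1+\alpha - 4\alpha)}$; I would just evaluate this to get a concrete number, which gives the two normalized inner products $\beta = (\alpha - \ell(4,1))/(1 - \ell(4,1))$ and $\gamma = (-\alpha - \ell(4,1))/(1-\ell(4,1))$ among the normalized $c_i'$.

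\medskip
\noindent\textbf{Using the extra vector $y$.} The point of having $y \in X \setminus \ex$ is to get a sharper bound than $|\ex| \le \frac{1-\alpha}{\ell(4,1)-\alpha}$ from Lemma~\ref{lmbound1}: $y$ imposes an additional linear/spectral constraint. Decompose $y = h' + c'$ with $h' \in P$, $c' \in P^\bot$. Consider the Gramian of $\{x_1, \ldots, x_s, y\}$, or better its projection onto $P^\bot$: the matrix $G(c_1', \ldots, c_s', c'')$ where $c''$ is the (normalized) $P^\bot$-component of $y$. By the structure just derived, the top-left $s \times s$ block has the form $(1-\beta) I + \beta J$ after shifting, i.e., it is the equicorrelation matrix; the last row/column records $\langle c_i', c''\rangle$, which takes at most two values depending on the sign of $\langle x_i, y\rangle$. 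When $y \in X(4,1)$ (case (1)), one shows $\langle x_i, y\rangle$ must be the \emph{same} sign for all $i$ — again by a negative-clique exclusion argument on the $p_j$'s shared with $x_i$ — so the bordered matrix has the special ``arrowhead after a rank-one correction'' shape, and Lemma~\ref{lm:forrb41} (the arrowhead positive-semidefiniteness bound $\sum i_k^2 \le ab$) applies after diagonalizing the equicorrelation block. That inequality becomes a quadratic constraint in $s$, yielding $|\ex| = s \le 36$. When $y \in X(4,2)$ (case (2)), $y$ shares fewer coordinates of the clique with $x$, the sign analysis is slightly looser (two sign values among $\langle x_i, y\rangle$ are possible, or the shared-clique count is smaller), giving the weaker bound $s \le 39$.

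\medskip
\noindent\textbf{Main obstacle.} The crux is the combinatorial sign analysis: proving that for $y \in X(4,1)$ the inner products $\langle x_i, y \rangle$ cannot mix signs (or controlling how they can mix in the $X(4,2)$ case), so that Lemma~\ref{lm:forrb41}'s arrowhead matrix is genuinely in the stated form. This needs careful bookkeeping of exactly which of $p_1, \ldots, p_4$ have negative inner product with $x$ versus with $y$, and then invoking $\km(X) = 4$ to rule out any configuration that would produce a size-$5$ negative clique. Once the matrix is in arrowhead form, the rest is: diagonalize $(1-\beta)I + \beta J$ via Lemma~\ref{pp:basic}, transform $G(c_1',\ldots,c_s',c'')$ into the shape of Lemma~\ref{lm:forrb41}, apply $\sum i_k^2 \le ab$, and solve the resulting inequality for $s$ with $\alpha = 1/5$ plugged in. I would also double-check the boundary: the bound should be consistent with (but strictly better than) the unconditional $\frac{1-\alpha}{\ell(4,1)-\alpha}$, confirming that the hypothesis ``$y \notin \ex$ exists'' is really being used.
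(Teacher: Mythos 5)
Your overall skeleton -- project onto $P^{\bot}$, border the Gramian of the projections $c_1,\ldots,c_s$ with the projection of $y$, and invoke Lemma~\ref{lm:forrb41} -- is the paper's route, but the proposal never carries out the computations that actually produce $36$ and $39$, and the step you single out as the crux is both unnecessary and unsupported. You claim one must show that $\langle x_i, y\rangle$ has the \emph{same} sign for all $i$ (case (1)) via a negative-clique exclusion. No such statement is needed, and it is not provable this way: with $x$ having sign pattern $\tfrac15(-1,1,1,1)$ and $y$ having pattern $\tfrac15(1,-1,1,1)$, a member of $\ex$ may have inner product $-1/5$ with $y$ (this only creates negative cliques of size $\leq 4$, which is allowed since $\km(X)=4$), while another member has $+1/5$. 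What the argument really needs is only the pointwise lower bound $|\,\langle x_k, y\rangle - \langle h, g\rangle\,| \geq \tfrac15 - \tfrac1{15} = \tfrac2{15}$, valid for \emph{either} sign, because one computes $\langle h, g\rangle = \pm 1/15$ from the explicit expressions of $h$ and $g$ as combinations of the $p_j$'s. This quantity, which drives both bounds, never appears in your proposal.

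The second gap is the justification for applying Lemma~\ref{lm:forrb41}. That lemma requires the top block to be a scalar multiple of the identity; you instead describe it as an equicorrelation block $(1-\beta)I+\beta J$ to be handled ``after diagonalizing.'' Diagonalizing would rotate the border vector and destroy the entrywise bound $|i_k|\geq 2/15$, so the plan as stated does not go through. What saves the day is a numerical coincidence you do not notice: for $\alpha=1/5$, $K=4$, $n=1$ we are exactly in the case $n=K-\frac{(1/\alpha)+1}{2}$, so $\ell(4,1)=\alpha=1/5$, hence $\langle c_i,c_j\rangle = \langle x_i,x_j\rangle - \langle h,h\rangle = \tfrac15-\tfrac15 = 0$ and the top block is literally $\tfrac45 I$ (equivalently, $\beta=0$). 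Finally, the split $36$ versus $39$ has nothing to do with a ``looser sign analysis'' or shared-clique counts: it comes solely from the norm of the projection $g$ of $y$ onto $P$, namely $\langle g,g\rangle = \ell(4,1)=1/5$ (so $\langle d,d\rangle = 4/5$) when $y\in X(4,1)$ versus $\langle g,g\rangle = \ell(4,2)=2/15$ (so $\langle d,d\rangle = 13/15$) when $y\in X(4,2)$; then $s\left(\tfrac2{15}\right)^2 \leq \left(\tfrac45\right)^2$ gives $s\leq 36$ and $s\left(\tfrac2{15}\right)^2 \leq \tfrac45\cdot\tfrac{13}{15}$ gives $s\leq 39$. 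Without these computations the stated constants are not derived.
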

\begin{proof}
Since $x\in X(4,1)$, by the definition of $X(4,1)$ in (\ref{eq:x}), 
 there is exactly $1$ positive inner product or $4-1=3$ positive inner products among $\langle x, p_i\rangle,\; i=1, 2, 3, 4$.
Without loss of generality, we assume 
\[ (\;\langle x, p_1\rangle,  \;\;\langle x, p_2\rangle, \;\;\langle x, p_3\rangle, \;\; \langle x, p_4\rangle\;)^{\top}\;=\;\frac{1}{5}(-1, 1, 1, 1)^{\top}.\]
If it is in the other cases, one can derive the same conclusion because of symmetry. 

Suppose $x = h + c$ where $h \in P$ and $c\in P^{\bot}$.
By Proposition \ref{lm:project},  
\begin{align}\label{eq:lmrb41hp}
h ~=~ a^{(1)}p_1+\cdots+a^{(4)}p_4~=~ \frac{p_2 + p_3 + p_4}{3}, 
\end{align}
where\[\left(a^{(1)}, a^{(2)}, a^{(3)}, a^{(4)}\right)~=~\alpha~\left((1+\alpha)I_4-\alpha J_4\right)^{-1}~(-1, 1, 1, 1)^{\top}~=~\left(0, 1/3, 1/3, 1/3\right).\]
Here, note $\left((1+\alpha)I_4-\alpha J_4\right)^{-1}\big\vert_{\alpha=1/5} = (5/4)I_4+(5/12)J_4$, and  by (\ref{eq:l}), 
\begin{align}\label{eq:lmrb41h}
\langle h, h\rangle~ =~ \ell(4,1)~ = ~\frac{\alpha^2 \left(4\alpha n \left(n-K\right)+K\left(1+\alpha\right)\right)}{(1+\alpha)\left(1-\left(K-1\right)\alpha\right)}\bigg\vert_{\alpha=1/5,\;\; n=1, \;\;K=4}~ = ~\frac{1}{5}. 
\end{align}
Assume 
\[ (\;\langle y, p_1\rangle,  \;\;\langle y, p_2\rangle, \;\;\langle y, p_3\rangle, \;\; \langle y, p_4\rangle\;)^{\top}\;=\; \frac{1}{5}\epsilon.\]
Since $y\not\in \ex$, by Lemma \ref{lm:ec}, $\epsilon$ can not be $\pm(-1, 1, 1, 1)^{\top}$.
\begin{itemize}
\item[(1)]
If $y\in X(4,1)$, then $\epsilon$ can be one of the six vectors below
\[ \pm(1, -1, 1, 1)^{\top}, \;\; \pm(1, 1, -1, 1)^{\top}, \;\; \pm(1, 1, 1, -1)^{\top}.\]
Suppose $y = g + d$ where $g \in P$ and $d\in P^{\bot}$.
According to those above six possible $\epsilon$'s, by Proposition \ref{lm:project},  $g$ will be respectively 
\begin{align}\label{eq:lmrb41gp}
\pm\frac{p_1 + p_3 + p_4}{3}, \;\; \pm\frac{p_1 + p_2 + p_4}{3},\;\;\pm\frac{p_1 + p_2 + p_3}{3}.
\end{align}
However, remark that in all cases, we have
\begin{align}\label{eq:lmrb41g}
\langle g, g\rangle = 1/5.
\end{align}
Let $\ex=\{x_1, \ldots, x_s\}$. 
By Corollary \ref{cry:ec}, we can assume that for each $i=1, \ldots, s$, 
\[x_i=h+c_i, \;\;\;\text{where}\;\;c_i\in P^{\bot}.\]
Recall the proof of Lemma~\ref{lmbound1} that for any different $x_i, x_j\in \ex \subset X(4, 1)$, we have 
\begin{align}\label{eq:lmrb41x}
\ip{x_i}{x_j}=1/5.
\end{align}  
Thus, by (\ref{eq:lmrb41h}), (\ref{eq:lmrb41g}) and (\ref{eq:lmrb41x}), 
the Gramian matrix of $c_1, \ldots, c_s, d$ is
{\footnotesize
\begin{align*}
\left(\begin{array}{cccc}
\ip{c_1}{c_1} &  \hdots & \ip{c_1}{c_s} & \ip{c_1}{d}\\  
\vdots & \ddots& \vdots & \vdots \\ 
\ip{c_s}{c_1}  & \hdots & \ip{c_s}{c_s}   & \ip{c_s}{d}\\
\ip{d}{c_1} &  \hdots & \ip{d}{c_s} & \ip{d}{d}
\end{array}\right)
&~=~
\left(\begin{array}{cccc}
\ip{x_1}{x_1} &  \hdots & \ip{x_1}{x_s} & \ip{x_1}{y}\\  
\vdots & \ddots& \vdots & \vdots \\ 
\ip{x_s}{x_1}  & \hdots & \ip{x_s}{x_s}   & \ip{x_s}{y}\\
\ip{y}{x_1} &  \hdots & \ip{y}{x_s} & \ip{y}{y}
\end{array}\right)-
\left(\begin{array}{cccc}
\ip{h}{h} &  \hdots & \ip{h}{h} & \ip{h}{g}\\  
\vdots & \ddots& \vdots & \vdots \\ 
\ip{h}{h}  & \hdots & \ip{h}{h}   & \ip{h}{g}\\
\ip{g}{h} &  \hdots & \ip{g}{h} & \ip{g}{g}
\end{array}\right)\\
&~=~
\left(\begin{array}{cccc}
1 &  \hdots & 1/5 & \ip{x_1}{y}\\  
\vdots & \ddots& \vdots & \vdots \\ 
1/5 & \hdots & 1   & \ip{x_s}{y}\\
\ip{y}{x_1} &  \hdots & \ip{y}{x_s} & 1
\end{array}\right)-
\left(\begin{array}{cccc}
1/5&  \hdots & 1/5&  \ip{h}{g}
\\  
\vdots & \ddots& \vdots & \vdots \\ 
1/5  & \hdots & 1/5   &  \ip{h}{g}\\
 \ip{h}{g}
 &  \hdots &  \ip{h}{g}
 & 1/5
\end{array}\right)\\
&~=~
\left(\begin{array}{cccc}
4/5 &  \hdots & 0 & \ip{x_1}{y}- \ip{h}{g}\\  
\vdots & \ddots& \vdots & \vdots \\ 
0 & \hdots & 4/5   & \ip{x_s}{y}- \ip{h}{g}\\
\ip{y}{x_1}- \ip{h}{g} &  \hdots & \ip{y}{x_s}- \ip{h}{g} & 4/5
\end{array}\right).
\end{align*}
}
For $i=1, \ldots, s$, let $i_k = \ip{x_k}{y} - \ip{h}{g}~~(=  \ip{y}{x_k} - \ip{h}{g})$. 
Here in order to apply Lemma \ref{lm:forrb41}, we assume $s>2$; otherwise, the conclusion $s\leq 36$ we want to prove  will be naturally true.
Then by Lemma \ref{lm:forrb41}, $\sum_{k=1}^si_k^2 \leq \left(\frac{4}{5}\right)^2$. 
Note also $ \ip{x_k}{y} $ is $\pm 1/5$ since $\ex\cup \{y\}\subset X$ is equiangular. By (\ref{eq:lmrb41hp}), (\ref{eq:lmrb41gp}) and the fact that $\ip{p_i}{p_j}=-1/5$ for any $i\neq j$, one can compute directly that $\ip{h}{g}$ is $\pm 1/15$. 

So 
\[|i_k|~=~|\ip{x_k}{y} - \ip{h}{g}|~\geq~  \left(1/5\right)- \left(1/15\right) ~=~ \left(2/15\right).\] Thus,
\[s\cdot\left(\frac{2}{15}\right)^2\leq \sum_{k=1}^si_k^2 \leq \left(\frac{4}{5}\right)^2 \quad \Rightarrow \quad s\leq \frac{\left(4/5\right)^2}{\left(2/15\right)^2} =36.\]
\item[(2)]
If $y\in X(4,2)$, then $\epsilon$ has the six possibilities below  
\[ \pm(1, 1, -1, -1)^{\top}, \;\; \pm(1, -1, 1, -1)^{\top}, \;\; \pm(1, -1, -1, 1)^{\top}.\]
According to these six $\epsilon$'s, by Proposition \ref{lm:project},  $g$ will respectively be
\[\pm\frac{p_1 + p_2 - p_3 - p_4}{6}, \;\; \pm\frac{p_1 - p_2 + p_3 - p_4}{6},\;\;\pm\frac{p_1 - p_2 - p_3 + p_4}{6}.\]
However, in all cases, we have $\langle g, g\rangle = 2/15$ and $\langle h, g\rangle=\pm 1/15$.
Let $\ex=\{x_1, \ldots, x_s\}$. 
By Corollary \ref{cry:ec}, we can assume that
\[x_i=h+c_i\;\;\;\text{for each}~i=1, \ldots, s,\;\;\text{where}\;\;c_i\in P^{\bot}. \]Similar to what we have done in the case (1), one can compute that the Gramian matrix of $c_1, \ldots, c_s, d$ is
{\footnotesize
\begin{align*}
\left(\begin{array}{cccc}
\ip{c_1}{c_1} &  \hdots & \ip{c_1}{c_s} & \ip{c_1}{d}\\  
\vdots & \ddots& \vdots & \vdots \\ 
\ip{c_s}{c_1}  & \hdots & \ip{c_s}{c_s}   & \ip{c_s}{d}\\
\ip{d}{c_1} &  \hdots & \ip{d}{c_s} & \ip{d}{d}
\end{array}\right)
~=~
\left(\begin{array}{cccc}
4/5 &  \hdots & 0 & \ip{x_1}{y}- \ip{h}{g}\\  
\vdots & \ddots& \vdots & \vdots \\ 
0 & \hdots & 4/5   & \ip{x_s}{y}- \ip{h}{g}\\
\ip{y}{x_1}- \ip{h}{g} &  \hdots & \ip{y}{x_s}- \ip{h}{g} & 13/15
\end{array}\right).
\end{align*}
}Again, for $i=1, \ldots, s$, let $i_k = \ip{x_k}{y} - \ip{h}{g}$.
By Lemma \ref{lm:forrb41}, $\sum_{k=1}^si_k^2 \leq \frac{4}{5}\cdot\frac{13}{15}  =\frac{52}{75} $. 
Note again $ \ip{x_k}{y} $ is $\pm 1/5$ and $\ip{h}{g}$ is $\pm 1/15$. So 
\[|i_k|~=~|\ip{x_k}{y} - \ip{h}{g}|~\geq~ \left(1/5\right)- \left(1/15\right)~ =~ \left(2/15\right).\] Thus, we have
\[s\cdot\left(\frac{2}{15}\right)^2\leq \sum_{k=1}^si_k^2 \leq \frac{52}{75} \quad \Rightarrow \quad s\leq \frac{52/75}{\left(2/15\right)^2} =39. \]
\end{itemize}

\end{proof}

\begin{lemma}\label{lm:rb42}
If $x\in X(4,2)$, then \[|\ex|~\leq~  s\left(r, 1/13, -5/13 \right).\]
\end{lemma}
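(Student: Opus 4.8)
The statement is precisely the instance of the general projection bound of Lemma~\ref{lm:bound1973} (equivalently Theorem~\ref{th:s2}) with $\alpha = 1/5$, $K = 4$, and $n = 2$, so the plan is to specialize that argument and carry out the arithmetic. First I would fix the equivalence class $\ex = \{x_1, \dots, x_s\} \subseteq X(4,2)$ and invoke Corollary~\ref{cry:ec} to replace it by a switching-equivalent equiangular set $\{y_1, \dots, y_s\}$ with $y_i = h + d_i$, where $h \in P$ is one fixed vector and $d_i \in P^{\bot}$ for all $i$. By Proposition~\ref{lm:project}(2) we then have $\langle h, h\rangle = \ell(4,2)$, and substituting $K=4$, $n=2$, $\alpha = 1/5$ into~(\ref{eq:l}) gives $\ell(4,2) = 2/15$; in particular $\langle h,h\rangle < 1$, so $\langle d_i, d_i\rangle = 1 - \ell(4,2) = 13/15 \neq 0$.

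Next I would strip off the rank-one contribution of $h$. Since $G(y_1,\dots,y_s) = \langle h,h\rangle J + G(d_1,\dots,d_s)$ and the off-diagonal entries of $G(y_1,\dots,y_s)$ equal $\pm 1/5$, the matrix $G(d_1,\dots,d_s) = G(y_1,\dots,y_s) - (2/15)J$ has constant diagonal $13/15$ and off-diagonal entries either $1/5 - 2/15 = 1/15$ or $-1/5 - 2/15 = -5/15$. Normalising each $d_i$ to a unit vector $d_i' = d_i/\sqrt{13/15} \in P^{\bot}$, the Gramian $G(d_1',\dots,d_s')$ has $1$'s on the diagonal and off-diagonal entries $(1/15)/(13/15) = 1/13$ or $(-5/15)/(13/15) = -5/13$. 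Hence $\{d_1',\dots,d_s'\}$ is a spherical two-distance set with mutual inner products $1/13$ and $-5/13$ contained in $P^{\bot}\subseteq{\mathbb R}^r$, which yields $|\ex| = s \leq s(r, 1/13, -5/13)$. Equivalently, one may simply apply Lemma~\ref{lm:bound1973} with $\beta = (\alpha - \ell(4,2))/(1 - \ell(4,2)) = 1/13$ and $\gamma = (-\alpha - \ell(4,2))/(1 - \ell(4,2)) = -5/13$.

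There is no genuine obstacle here: the argument is the already-proven general one, and the only points that require attention are the numerical evaluations $\ell(4,2) = 2/15$ and the two resulting off-diagonal values $1/13$, $-5/13$, together with the fact (noted in the proof of Proposition~\ref{lm:project}, since an equiangular set contains no pair of orthogonal vectors) that $\langle h,h\rangle \neq 1$, which is what makes the normalisation step legitimate.
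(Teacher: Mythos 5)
Your proposal is correct and takes essentially the same route as the paper: the paper's proof simply invokes Theorem~\ref{th:s2}(4) (which is itself proved by the projection-and-normalization argument of Lemma~\ref{lm:bound1973}) with $\ell(4,2)=2/15$, $\beta=1/13$, $\gamma=-5/13$, exactly the computation you carry out. Re-deriving the projection argument explicitly, as you do, adds nothing of substance beyond the paper's citation of the general lemma.
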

\begin{proof} 
We note that $K-(1 + (1/\alpha))/2 = 4 - (1+5)/2 = 1<n=2$.  So we apply Theorem~\ref{th:s2}~(4) to conclude that $|\ex|\leq 
 s\left(r, \beta, \gamma \right)$,
 with $\ell(4,2) = 2/15$, $\beta = \left(1/5 - 2/15\right)/\left(1 - 2/15\right) = 1/13$, and $\gamma = \left(-1/5 - 2/15\right)/\left(1 - 2/15\right) = -5/13$. 
\end{proof}

\noindent
{\bf Proof of Lemma \ref{lm:rb4}.}
Based on Formula (\ref{eq:newdec4}) and Lemma \ref{lm:rb41}, we have $4$ cases to consider.  
\begin{itemize}
\item[]{\bf (Case 1).} If $X(4,2)=\emptyset$ and there is only one non-empty  equivalence class in $X(4,1)$, say $\ex_1$, then by Theorem \ref{th:s2} (1), $|\ex_1|\leq r-4$. So by Formula (\ref{eq:newdec4}), \[|X| ~=~ 4 + |\ex_1| ~\leq~ 4 + r-4~ =~r.\] 
\item[]{\bf (Case 2).} If $X(4,2)=\emptyset$ and there are at least two non-empty equivalence classes in $X(4,1)$, then by Lemma
\ref{lm:rb41} (1), for each equivalence class $\ex_m$, we have $|\ex_m|\leq 36$. So by the formula (\ref{eq:newdec4}), 
\[|X| ~=~ 4 + |X(4,1)| ~=~ 4 + \sum_{m=1}^4 |\ex_m| ~\leq~ 4 + 4\times 36 ~=~148.\]
\item[]{\bf (Case 3).} If $X(4,2)\neq \emptyset$ and
 if there is only one non-empty  equivalence class in $X(4,1)$, say $\ex_1$, then by Lemma \ref{lm:rb41} (2), $|\ex_1|\leq 39$,
 and by Lemma \ref{lm:rb42}, for each $\ex_m\subset X(4,2)$, we have 
 \[|\ex_m|~\leq~ s\left(r, 1/13, -5/13 \right), \;\;\;m=5, 6, 7.\]
  So 
 by Formula (\ref{eq:newdec4}), 
 \[|X| ~= ~4 + |X(4,1)| + |X(4,2)|~ = ~4 + |\ex_1| + \sum_{m=5}^7 |\ex_m|  ~ \leq~  
  43 + 3\cdot s\left(r, 1/13, -5/13 \right).
 \]
\item[]{\bf (Case 4).} If $X(4,2)\neq \emptyset$ and if there are at least two non-empty equivalence classes in $X(4,2)$, then by Lemma
\ref{lm:rb41} (1), for each equivalence class $\ex_m$, we have $|\ex_m|\leq 36$,  and by Lemma \ref{lm:rb42}, for each $\ex_m\subset X(4,2)$, we have 
 \[|\ex_m|~\leq~ s\left(r, 1/13, -5/13 \right), \;\;\;m=5, 6, 7.\]
So by Formula (\ref{eq:newdec4}),
\begin{align*}
|X| &~=~ 4 + |X(4,1)| + |X(4,2)| ~=~ 4 + \sum_{m=1}^7 |\ex_m|\\  
&~\leq~ 4 + 4\times 36 + 3\cdot s\left(r, 1/13, -5/13 \right)~=~
148 + 3\cdot s\left(r, 1/13, -5/13 \right).
\end{align*}
\end{itemize}
Note also for any $r$, by Corollary~\ref{lm:rb4lower}, $s\left(r, 1/13, -5/13 \right)\geq r$.
So among the above $4$ cases, the maximum upper bound of $|X|$ is that in the last case $148 + 3\cdot s\left(r, 1/13, -5/13 \right)$. 
$\Box$

\section{\bf Proof of Lemma \ref{lm:rb5}}\label{app:AD}
The goal of this subsection is to prove Lemma \ref{lm:rb5}. Suppose we have an equiangular set $X$ in ${\mathbb R}^r$ with angle $1/5$ and  $K=\km(X)=5$.  Fix a $K$-base $\{p_1, p_2, p_3, p_4, p_5\}$. 
Define $X(K, n)$ as in (\ref{eq:x}). Then the partition in (\ref{eq:decompose}) becomes 
\begin{align}\label{eq:dec5}
X
~=~\{p_1, \ldots, p_5\} \bigcup X(5,1) \bigcup X(5,2).
\end{align}
Below, we discuss the upper bounds for $|X(5,1)|$ and $|\{p_1, \ldots, p_5\} \cup X(5,2)|$ in Lemma \ref{lm:rb51} and Lemma \ref{lm:rb52}, respectively. After that, we give the proof of 
Lemma \ref{lm:rb5}. 
\begin{lemma}\label{lm:rb51}
\[|X(5,1)|~\leq~ 15.\]
\end{lemma}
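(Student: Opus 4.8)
The plan is to obtain Lemma~\ref{lm:rb51} as a direct consequence of the counting result Theorem~\ref{thm:ec} together with the per-equivalence-class bound in Theorem~\ref{th:s2}~(1), specialized to $K=5$, $\alpha=1/5$, $n=1$. Since $\km(X)=5<(1/\alpha)+1=6$, the partition~(\ref{eq:decompose}) with respect to the fixed $K$-base applies, and by Lemma~\ref{lm:ec} every equivalence class $\ex\subset X\setminus\{p_1,\ldots,p_5\}$ lies entirely inside one of the sets $X(5,n)$; hence $X(5,1)$ is a disjoint union of equivalence classes. Because $2n=2<K=5$ when $n=1$, Theorem~\ref{thm:ec} gives that there are exactly $\binom{5}{1}=5$ equivalence classes contained in $X(5,1)$.

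The second step is to bound the size of each such equivalence class. For this I would first evaluate $\ell(5,1)$ via formula~(\ref{eq:l}):
\[
\ell(5,1) ~=~ \left(\tfrac{1}{5}\right)^{2}\cdot\frac{4\cdot\tfrac{1}{5}\cdot 1\cdot(1-5)+\tfrac{6}{5}\cdot 5}{\tfrac{6}{5}\cdot\tfrac{1}{5}} ~=~ \frac{7}{15}.
\]
Since $K-\tfrac{(1/\alpha)+1}{2}=5-3=2>1=n$, we are in the second branch of Theorem~\ref{th:s2}~(1), so for every equivalence class $\ex\subset X(5,1)$,
\[
|\ex| ~\leq~ \frac{1-\alpha}{\ell(5,1)-\alpha} ~=~ \frac{4/5}{\,7/15-1/5\,} ~=~ \frac{4/5}{4/15} ~=~ 3.
\]
Summing over the (at most) five equivalence classes then yields $|X(5,1)|\leq 5\cdot 3=15$, which is the claim.

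The statement is essentially bookkeeping rather than a deep argument: the only points that require care are checking that one is indeed in the correct branch of Theorem~\ref{th:s2}~(1) — i.e., that $1<K-((1/\alpha)+1)/2$ holds for $K=5$, $\alpha=1/5$ — and that the arithmetic producing $\ell(5,1)=7/15$ and the resulting bound $3$ is carried out correctly. There is no genuine obstacle beyond these verifications.
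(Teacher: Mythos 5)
Your proposal is correct and follows essentially the same route as the paper's own proof: Theorem~\ref{thm:ec} gives the $\binom{5}{1}=5$ equivalence classes in $X(5,1)$, and the second branch of Theorem~\ref{th:s2}~(1) (valid since $5-3=2>1$) with $\ell(5,1)=7/15$ gives $|\ex|\leq 3$, hence $|X(5,1)|\leq 15$. Your explicit verification of the branch condition and of $\ell(5,1)$ is simply a spelled-out version of what the paper leaves implicit.
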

\begin{proof}
In this case, we have $\alpha=1/5$, $K=5$. By Theorem \ref{th:s2} (1), for any $x\in X(5,1)$, 
\[|\ex|~\leq~ \frac{1-\alpha}{\ell(K,1)-\alpha}~=~\frac{1-1/5}{\ell(5,1)-1/5}~=~3.\]
By Theorem \ref{thm:ec},  there are $\binom{5}{1}=5$ equivalence classes $\ex$ in $X(5,1)$. So 
$|X(5,1)|~\leq~ 5\times 3 ~=~ 15$. 
\end{proof}

\begin{lemma}\label{lm:rb52}
\[|\{p_1, \ldots, p_5\} \bigcup X(5,2)|~\leq~ 
\begin{cases}
 275, &  23\leq r\leq 185 \\ 
 r +\lfloor (r-5)/2\rfloor,& r\geq 185.
\end{cases}
\]
\end{lemma}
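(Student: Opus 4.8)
The plan is to bound $|\{p_1,\ldots,p_5\}\cup X(5,2)|$ by projecting onto the orthogonal complement of a carefully chosen subspace and thereby reducing to a bound on a spherical two-distance set, just as in the proof of Lemma~\ref{lm:bound1973}, but applied simultaneously to the $K$-base vectors and the whole of $X(5,2)$ rather than to a single equivalence class. First I would recall that by Theorem~\ref{thm:ec} there are $\tfrac12\binom{5}{2}=5$ equivalence classes in $X(5,2)$, and by Corollary~\ref{cry:ec} each such class $\overline{x}$ is switching equivalent to a set whose members share a common projection $h\in P$ with $\langle h,h\rangle=\ell(5,2)$. A short computation using~\eqref{eq:l} gives $\ell(5,2)=\alpha^2\,(4\alpha n(n-K)+(1+\alpha)K)/((1+\alpha)(1-(K-1)\alpha))\big|_{\alpha=1/5,n=2,K=5}=2/15$, so $n=2=K-((1/\alpha)+1)/2$ exactly, i.e.\ the borderline case $\ell(5,2)=\alpha$, $\beta=0$, $\gamma=-2\alpha/(1-\alpha)=-1/2$.

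The key idea is that, because $\beta=0$, after projecting each $x\in X(5,2)$ and each $p_i$ onto $P^{\bot}$ and renormalizing, the vectors coming from a single equivalence class become \emph{mutually orthogonal}, while vectors from different classes (and the projections of the $p_i$) have inner product either $0$ or $-1/2$. One then wants to show that the whole collection $\{p_1,\ldots,p_5\}\cup X(5,2)$, after projection to $P^{\bot}$ (which has dimension $r-5$) together with a rescaling, forms a set of unit vectors with mutual inner products in $\{0,-1/2\}$ living in roughly $r$-dimensional space. Actually one needs a slightly larger ambient space: I would argue that $\{p_1,\ldots,p_5\}\cup X(5,2)$ is switching equivalent to a spherical two-distance set with inner products $0$ and $-1/2$ in $\bR^{r}$ (keeping the $p_i$ themselves and adjoining the projected/rescaled $x$'s, using that the $p_i$ span a $5$-dimensional space while the projections $c_i$ live in the $(r-5)$-dimensional complement), or alternatively reduce directly to bounding $s(r,0,-1/2)$ type quantities. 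Two-distance sets with inner products $\{0,-1/2\}$ are exactly unions of mutually orthogonal ``negative-clique'' blocks of size $\leq 3$ (since a negative clique at angle $1/2$ has size at most $3$), which is precisely the structure that Lemmens--Seidel exploited; this is what produces the bound of the shape $r+\lfloor(r-5)/2\rfloor$ for large $r$ and the constant $275$ for the middle range via the classification \citep[Theorem 4.1]{LS1973} (applied with the appropriate shift) and the exceptional configuration living in dimension up to $185$.

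Concretely, the steps in order: (1) compute $\ell(5,2)=2/15$ and identify the borderline case; (2) invoke Corollary~\ref{cry:ec} and normalize within each equivalence class, obtaining orthonormal blocks $c'_i$; (3) compute all cross inner products $\langle c'_i,c'_j\rangle$ between blocks and between the $c'_i$ and the (projected, rescaled) $p_k$, showing they lie in $\{0,-1/2\}$ — this uses the explicit form of $h$ from Proposition~\ref{lm:project}~(2) and $\langle p_i,p_j\rangle=-\alpha$; (4) conclude that $\{p_1,\ldots,p_5\}\cup X(5,2)$ is switching equivalent to a two-distance set with inner products $0,-1/2$ in $\bR^r$; (5) apply the Lemmens--Seidel bound \citep[Theorem 4.1]{LS1973} for $s(r,0,-1/2)$, which gives $r-5+\lfloor 2\cdot\tfrac15\cdot\tfrac{r-5}{1-1/5}\rfloor$ — wait, that is the bound for a \emph{pillar}; here I would instead use the direct bound $|\{p_1,\ldots,p_5\}\cup X(5,2)|\le r+\lfloor(r-5)/2\rfloor$ for $r\ge 185$ and the finite exceptional bound $275$ for $23\le r\le 185$, tracking constants carefully so that the $5$ base vectors are included.

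The main obstacle I expect is step~(4): showing that the base vectors $p_1,\ldots,p_5$ can be folded into the two-distance set together with the projected vectors without inflating the dimension or the inner-product set. The $p_i$ have pairwise inner product $-1/5\notin\{0,-1/2\}$, so one cannot simply adjoin them; instead one must handle them as a separate orthogonal summand and argue that the combined configuration still fits the $\{0,-1/2\}$ structure after an appropriate switching — or, more likely, carry out the pillar-style counting argument of \citep[Theorems 4.1--4.3]{LS1973} directly, bounding the five pillars of $X(5,2)$ together with the simplex $\{p_1,\ldots,p_5\}$ and accounting for how many vectors each pillar can contribute before forcing $K_{1/5}(X)\ge 6$. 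Getting the exact constants $275$ and $r+\lfloor(r-5)/2\rfloor$ (as opposed to something slightly weaker) will require invoking the precise classification of graphs with smallest eigenvalue $-2$ / largest eigenvalue $2$ used in \citep[Section 5]{LS1973}, and checking that the exceptional root-system configurations are realizable only up to dimension $185$.
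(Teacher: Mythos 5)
There is a genuine gap: the central device of the paper's proof is missing, and the reduction you propose in its place breaks down. The paper never projects $\{p_1,\ldots,p_5\}\cup X(5,2)$ to a two-distance set. Instead it adjoins the ``phantom'' vector $p_6=-\sum_{i=1}^5 p_i$: this is a unit vector with $\langle p_j,p_6\rangle=-1/5$ for all $j$, and a one-line sign count shows $\langle x,p_6\rangle=\pm 1/5$ for every $x\in X(5,2)$, so $Y=\{p_1,\ldots,p_6\}\cup X(5,2)$ is again equiangular with angle $1/5$ and $G(p_1,\ldots,p_6)=(1+\alpha)I-\alpha J$, i.e.\ $K_{1/5}(Y)=6$. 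Theorem~\ref{ls1973} then bounds $|Y|$ by $276$ for $23\leq r\leq 185$ and by $r+1+\lfloor (r-5)/2\rfloor$ for $r\geq 185$, and subtracting the one adjoined vector gives exactly the constants $275$ and $r+\lfloor (r-5)/2\rfloor$. Your proposal contains no mechanism of this kind for treating the five equivalence classes of $X(5,2)$ \emph{together with} the base as a single configuration to which an already-known sharp bound applies; you explicitly defer to ``carrying out the pillar-style counting argument of Lemmens--Seidel directly,'' which is precisely the content that would have to be supplied and is not.

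Moreover, your steps (3)--(4) fail as stated: after projecting onto $P^{\bot}$ and renormalizing, only vectors \emph{within one} equivalence class have inner products in $\{0,-1/2\}$. Across classes the common projections $h_1,h_2$ differ, and for example with sign patterns $\epsilon_1=(1,1,-1,-1,-1)^{\top}$ and $\epsilon_2=(1,-1,1,-1,-1)^{\top}$ one computes $\langle h_1,h_2\rangle=1/15$, so the normalized cross inner products are $(\pm 1/5-1/15)/(4/5)$, i.e.\ $1/6$ or $-1/3$, not $0$ or $-1/2$. Hence the union is not switching equivalent to a $\{0,-1/2\}$ two-distance set and there is nothing for your step (5) to apply to; bounding each of the five classes separately and summing would give a bound far weaker than $275$ or $r+\lfloor (r-5)/2\rfloor$. (A minor slip along the way: $\ell(5,2)=1/5=\alpha$, not $2/15$; the value $2/15$ is $\ell(4,2)$. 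Your identification of the borderline case $\beta=0$, $\gamma=-1/2$ is nevertheless correct.)
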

\begin{proof}
Let $p_6=-\sum_{i=1}^5p_i$. 
Note for each $j=1,\ldots,5$, 
\[\langle p_j, p_6\rangle~=~\langle p_j, -\sum_{i=1}^5p_i\rangle~=~-1 - 4\times(-1/5)~=~-1/5.\]
Note also for any  $x\in X(5,2)$, by the definition of $X(5,2)$ in (\ref{eq:x}), there are $2$ or $5 - 2 =3$ positive $1/5$'s among $\langle x, p_i\rangle$ for $i= 1 \hdots 5$.
If there are $2$ positive $1/5$'s in these $5$ inner products, then
\[\langle x, p_6\rangle~=~\langle x, -\sum_{i=1}^5p_i\rangle~=~-2\times (1/5) - 3\times(-1/5)~=~ 1/5,\]
and if there are $3$ positive $1/5$'s in the $5$ inner products, then
\[\langle x, p_6\rangle~=~\langle x, -\sum_{i=1}^5p_i\rangle~=~-2\times (-1/5) - 3\times(1/5)~=~-1/5.\]
Let
\[Y = \{p_6\}\bigcup\{p_1, \ldots, p_5\} \bigcup X(5,2). \] 
Then $Y$ is equiangular with angle $1/5$ in ${\mathbb R}^r$, and we have $\km(Y)=6$ since 
 \[G\left(p_1, \ldots, p_6\right)=(1+\alpha)I-\alpha J \big\vert_{\alpha=1/5}.\]
 However, unlike in previous lemmas, $Y$ is not switching equivalent to a subset of $X$ as
  \[\km(X)=5 < 6 =\km(Y).\] 
  Thus, neither $p_6$ nor $-p_6$ is in $X$.
But it still holds by Theorem~\ref{ls1973} that, 
\[|Y|~\leq~
\begin{cases}
 276, &  23\leq r\leq 185 \\ 
 r + 1 + \lfloor (r-5)/2\rfloor,& r\geq 185.
\end{cases}
\]
So we have
\[
|\{p_1, \ldots, p_5\} \bigcup X(5,2)| ~=~ |Y|-1~\leq~ 
\begin{cases}
 275, &  23\leq r\leq 185 \\ 
 r +\lfloor (r-5)/2\rfloor, & r\geq 185.
\end{cases}
\]
\end{proof}

\noindent
{\bf Proof of Lemma \ref{lm:rb5}.}
By Formula (\ref{eq:dec5}) and Lemmas \ref{lm:rb51}--\ref{lm:rb52}, we have 
\begin{align*}
|X|~=~|X(5,1)| + |\{p_1, \ldots, p_5\} \bigcup X(5,2)|~\leq~
\begin{cases}
 290, &  23\leq r\leq 185 \\ 
 r + 15 + \lfloor (r-5)/2\rfloor, & r\geq 185.
\end{cases}~~~~~~~~~~~~~~\Box
\end{align*}

\end{appendices}

\end{document}